\newtheorem{thm}{Theorem}[section]
\newtheorem{prop}[thm]{Proposition}
\newtheorem{lem}[thm]{Lemma}
\newtheorem{definition}[thm]{Definition}
\newtheorem{rem}[thm]{Remark}
\newcommand{\quater}{\mathbb{H}}
\newcommand{\Z}{\mathbb{Z}}
\newcommand{\R}{\mathbb{R}}
\newcommand{\C}{\mathbb{C}}
\newcommand{\T}{\mathbb{T}}
\newcommand{\E}{\mathcal{E}}
\newcommand{\F}{\mathcal{F}}
\newcommand{\bdomega}{\boldsymbol{\omega}}
\newcommand{\bdeta}{\boldsymbol{\eta}}
\newcommand{\vol}{{\rm vol}}
\newcommand{\del}{\partial}
\title[The energy of maps]{
The energy of maps accompanying the collapsing of the $K3$ surface to a flat 3-dimensional orbifold}
\author[K. Hattori]{Kota Hattori}
\address{Keio University, 
3-14-1 Hiyoshi, Kohoku, Yokohama 223-8522, Japan}
\email{hattori@math.keio.ac.jp}
\begin{document}
\maketitle

\begin{abstract}
We study the Dirichlet energy of some smooth maps appearing in a collapsing family of hyper-K\"ahler metrics on the $K3$ surface constructed by Foscolo. We introduce an invariant for homotopy classes of smooth maps from the $K3$ surface with a hyper-K\"ahler metric to a flat Riemannian orbifold of dimension $3$, then show that it gives a lower bound of the energy. Moreover, we show that the ratio of the energy to the invariant converges to $1$ for Foscolo's collapsing families. 
\end{abstract}

\section{Introduction}
From the viewpoint of differential geometry, one of the significant problems on the $K3$ surface is to classify and construct the collapsing of hyper-K\"ahler metrics. On the compact complex surfaces, the hyper-K\"ahler metrics are equivalent to the Ricci-flat K\"ahler metrics with holomorphic volume form. 
By Yau's Theorem \cite{Yau1978} for the Calabi conjecture, the Ricci-flat K\"ahler metrics always exist on compact K\"ahler surfaces with trivial canonical bundles, hence hyper-K\"ahler metrics exist on $K3$ surfaces. 
In \cite{GW2000}, Gross and Wilson have constructed families of Ricci-flat K\"ahler metrics on elliptic $K3$ surfaces converging to $2$-sphere with some singular Riemannian metrics in the sense of Gromov-Hausdorff topology. 
Recently, Foscolo constructed families of hyper-K\"ahler metrics on the $K3$ surface converging to $3$-dimensional flat Riemannian orbifolds in \cite{Foscolo2019} and Hein, Sun, Viaclovsky and Zhang constructed families of hyper-K\"ahler metrics on the $K3$ surface converging to the $1$-dimensional segments in \cite{HSVZ2022}. 
Moreover, Sun and Zhang \cite{SZ2022} classified the limit space appearing as the Gromov-Hausdorff convergence of hyper-K\"ahler metrics on the $K3$ surface with the fixed diameters, and Odaka and Oshima \cite{OO2021} gave the conjecture of a more precise picture of collapsing, using the several notions of compactifications of symmetric spaces. 

In those collapsing families, we can observe that some maps from the $K3$ surface to the limit space appear naturally. 
The typical case is Gross and Wilson's example. In this case, we consider an elliptic fibration $f\colon X\to \C\mathbb{P}^1$ with $24$ singular fibers of Kodaira type $I_1$. 
Then the collapsing families of Ricci-flat K\"ahler metrics converge to $\C\mathbb{P}^1$ with all of the fibers shrinking. Here, $f$ can be regarded as a holomorphic map, or a special Lagrangian fibration after rotating hyper-K\"ahler structures. 

In differential geometry, holomorphic maps are typical examples of harmonic maps. Let $(M,g_M)$ and $(N,g_N)$ be Riemannian manifolds and suppose that $M$ is compact. For a smooth map $f\colon M\to N$ and a point $x\in M$, let $A$ be the representation matrix of a linear map $df_x\colon T_xM\to T_{f(x)}N$ for some orthonormal basis of $T_xM$ and $T_{f(x)}N$. 
We define a function $\| df\|_{g_M,g_N}\colon M\to \R$ by 
\begin{align*}
\| df\|_{g_M,g_N}(x):=\sqrt{ {\rm tr}({}^t\!AA)}. 
\end{align*}
Then the Dirichlet's energy of $f$ is defined by 
\begin{align*}
\E(f,g_M,g_N):=\int_M\| df\|_{g_M,g_N}^2d\mu_{g_M},
\end{align*}
where $\mu_{g_M}$ is the volume measure of $g_M$. 
Then $f$ is said to be harmonic if it is the critical point of $\E$. 
If $M,N$ are complex manifolds, $g_M,g_N$ are K\"ahler metrics and $f$ is holomorphic, then $f$ is not only harmonic but also minimizes $\E$ in its homotopy class by the following reason. 
Let $\omega_M,\omega_N$ be the K\"ahler form on $M,N$, respectively, and put $m=\dim_\C M$, $n=\dim_\C N$. 
Then, for any smooth map $f$, $\omega_M^{m-1}\wedge f^*\omega_N$ is $2m$-form on $M$, hence there is a function $\tau_f$ such that $\omega_M^{m-1}\wedge f^*\omega_N=\tau_f\vol_{g_M}$, where $\vol_{g_M}=\omega_M^m/m!$ is the volume form. 
Lichnerowicz \cite{Lichnerowicz1969} showed the inequality 
\begin{align*}
m\int_M\omega_M^{m-1}\wedge f^*\omega_N \le \E(f,g_M,g_N)
\end{align*}
and showed that the equality holds iff $f$ is holomorphic. Since the left-hand side is determined by the homotopy class of $f$, 
therefore, the holomorphic maps give the minimum of $\E$ restricted to their homotopy classes. 
The author generalized this observation to smooth maps between manifolds with the geometric structure characterized by differential forms in \cite{hattori2024calibrated}. 

In the case of the collapsing families constructed in \cite{Foscolo2019} and \cite{HSVZ2022}, we also need the approximation maps to show the Gromov-Hausdorff convergence, however, the differential geometric properties of these maps are not known well. 
In this paper, we study the asymptotic behavior of the energy of these maps in the case of collapsing families constructed by Foscolo in \cite{Foscolo2019}. Let $X$ be the smooth $4$-manifold diffeomorphic to the $K3$ surface and $\T:=T^3=\R^3/\Z^3$ be the $3$-dimensional torus with a flat Riemannian metric $g_{\T}$. Here, we have the standard $\{ \pm 1\}$-action on $T^3$ defined by $x\mapsto -x$, which has $8$ fixed points. 
Since the metric $g_{\T}$ is $\{ \pm 1\}$-invariant, it descends to  the metric on the orbifold $\T/\{ \pm 1\}$ which we also write $g_{\T}$. 
In this paper, we introduce an invariant for homotopy classes of smooth maps $X\to \T/\{ \pm 1\}$, and show that it gives the lower bound for the Dirichlet energy restricted to the fixed homotopy class.
Here, the smoothness of maps between orbifolds is defined in \cite{Satake1956}. For a smooth map $f\colon X\to \T/\{ \pm 1\}$, we denote by $[f]$ the homotopy class represented by $f$. 
Let $[X,\T/\{ \pm 1\}]$ be the set consisting of all homotopy classes of smooth maps from $X$ to $\T/\{ \pm 1\}$. 
Then we obtain the next main result. 
\begin{thm}
Let $\varepsilon_0>0$ be a sufficiently small positive constant and $(g_\varepsilon)_{\varepsilon\in(0,\varepsilon_0]}$ be a family of hyper-K\"ahler metrics constructed in \cite{Foscolo2019} such that 
$\{ (X,g_\varepsilon)\}_\varepsilon$ converges to $(\T/\{ \pm 1\}, g_{\T})$ as $\varepsilon\to 0$ in the sense of Gromov-Hausdorff topology. Then there is a function 
\begin{align*}
\mathcal{I}_\varepsilon\colon [X,\T/\{ \pm 1\}]\to \R
\end{align*}
such that $\E(f)\ge \mathcal{I}_\varepsilon([f])$ for any $\varepsilon$ and any smooth map $f\colon X\to \T/\{ \pm 1\}$. 
Moreover, there is a smooth map $F_\varepsilon \colon X\to \T/\{ \pm 1\}$ for each $\varepsilon$ satisfying the following properties. 
\begin{itemize}
\setlength{\parskip}{0cm}
\setlength{\itemsep}{0cm}
 \item[$({\rm i})$] $F_\varepsilon$ are approximation maps which give the Gromov-Hausdorff convergence $(X,g_\varepsilon)\to (\T/\{ \pm 1\},g_\T)$ as $\varepsilon\to 0$. 
 \item[$({\rm ii})$] $[F_\varepsilon]=[F_{\varepsilon'}]$ for all $\varepsilon,\varepsilon'$. 
 \item[$({\rm iii})$] $\mathcal{I}_\varepsilon([F_\varepsilon])>0$ for all $\varepsilon$ and $\lim_{\varepsilon\to 0}\mathcal{I}_\varepsilon([F_\varepsilon])=0$. 
 \item[$({\rm iv})$] $\lim_{\varepsilon\to 0}\E(F_\varepsilon)/\mathcal{I}_\varepsilon([F_\varepsilon])=1$. 
\end{itemize}
\label{thm main intro}
\end{thm}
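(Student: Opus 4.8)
The plan is to prove the statement in three stages: (a) an a priori lower bound for $\E$ of Lichnerowicz type, which manufactures $\mathcal{I}_\varepsilon$; (b) the construction of the maps $F_\varepsilon$ out of Foscolo's gluing data; and (c) a computation showing that $F_\varepsilon$ asymptotically saturates the bound. For (a), note that the constant $2$-forms $\sigma_1=dx_2\wedge dx_3$, $\sigma_2=dx_3\wedge dx_1$, $\sigma_3=dx_1\wedge dx_2$ on $\R^3$ are invariant under $x\mapsto-x$, hence descend to closed $2$-forms on $\T/\{\pm1\}$ whose classes form a basis of $H^2(\T/\{\pm1\};\R)$. Let $(\omega_1^\varepsilon,\omega_2^\varepsilon,\omega_3^\varepsilon)$ be the hyper-K\"ahler triple of $g_\varepsilon$, with complex structures $J_a^\varepsilon$, and for a smooth $f\colon X\to\T/\{\pm1\}$ write $\beta_a:=f^*(dx_a)$, a $1$-form well defined up to sign, so that $f^*\sigma_a=\beta_{a+1}\wedge\beta_{a+2}$ (cyclic indices) and $\|df\|_{g_\varepsilon,g_\T}^2=\sum_a|\beta_a|_{g_\varepsilon}^2$ are unambiguous. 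Since each $\omega_a^\varepsilon$ is self-dual one has $\omega_a^\varepsilon\wedge f^*\sigma_a=g_\varepsilon(J_a^\varepsilon\beta_{a+1}^\sharp,\beta_{a+2}^\sharp)\,\vol_{g_\varepsilon}$, and because $J_a^\varepsilon$ is a $g_\varepsilon$-isometry the estimate $2uv\le u^2+v^2$ gives the pointwise Lichnerowicz-type inequality
\begin{align*}
\sum_{a=1}^{3}\omega_a^\varepsilon\wedge f^*\sigma_a\ \le\ \|df\|_{g_\varepsilon,g_\T}^2\ \vol_{g_\varepsilon},
\end{align*}
valid for every hyper-K\"ahler triple, with equality at a point exactly when $df$ there has the special form realised by a Gibbons--Hawking projection. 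The same inequality for the rotated triple $(\sum_a R_{ab}\omega_a^\varepsilon)_b$, $R\in SO(3)$, integrated over $X$, reads $\E(f)\ge\sum_{a,b}R_{ab}\langle[\omega_a^\varepsilon],f^*[\sigma_b]\rangle$, whose right-hand side depends only on $[f]$, $\varepsilon$ and $R$; taking the supremum over $R$ I set
\begin{align*}
\mathcal{I}_\varepsilon([f]):=\sup_{R\in SO(3)}\ \sum_{a,b}R_{ab}\,\big\langle[\omega_a^\varepsilon],\,f^*[\sigma_b]\big\rangle,
\end{align*}
a finite function of $[f]$ which is $\ge0$ (the four matrices $R=\mathrm{diag}(\pm1,\pm1,\pm1)$ of determinant $1$ give values summing to $0$) and satisfies $\E(f)\ge\mathcal{I}_\varepsilon([f])$ for all $f$.

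For (b), recall from \cite{Foscolo2019} that $g_\varepsilon$ is built by gluing, onto a principal $S^1$-bundle over the complement in $\T/\{\pm1\}$ of small balls about the $8$ orbifold points and the monopole points of an auxiliary positive harmonic function, a rescaled Gibbons--Hawking metric $g_\varepsilon=W_\varepsilon\,g_\T+W_\varepsilon^{-1}\tilde\theta_\varepsilon^{\,2}$ with $W_\varepsilon\to1$ and fibres of length $O(\varepsilon)$, together with finitely many model pieces (of ALF $A_k$, $D_k$ or Atiyah--Hitchin type, suitably rescaled) of $g_\varepsilon$-diameter $O(\varepsilon)$ in place of the removed balls. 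On the Gibbons--Hawking region I take $F_\varepsilon$ to be the bundle projection to $\T/\{\pm1\}$; over each model piece I extend $F_\varepsilon$ by the collapse map of the model onto its asymptotic flat base, interpolating on the gluing annuli. This can be arranged smoothly in the sense of \cite{Satake1956} and continuously in $\varepsilon$, which gives (ii); and since $F_\varepsilon$ is a Riemannian submersion with $O(\varepsilon)$ fibres over the Gibbons--Hawking region while the exceptional regions shrink to points, the $F_\varepsilon$ realise the Gromov--Hausdorff convergence, giving (i).

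For (c), work on the Gibbons--Hawking region in flat coordinates $y=(y_1,y_2,y_3)$ with $F_\varepsilon$ the projection: then $\omega_a^\varepsilon=dy_a\wedge\tilde\theta_\varepsilon+W_\varepsilon\,dy_{a+1}\wedge dy_{a+2}$ and $F_\varepsilon^*\sigma_a=dy_{a+1}\wedge dy_{a+2}$, so $\omega_a^\varepsilon\wedge F_\varepsilon^*\sigma_a=dy_1\wedge dy_2\wedge dy_3\wedge\tilde\theta_\varepsilon$ for every $a$, while $\|dF_\varepsilon\|_{g_\varepsilon,g_\T}^2\,\vol_{g_\varepsilon}=3\,dy_1\wedge dy_2\wedge dy_3\wedge\tilde\theta_\varepsilon$; hence the pointwise inequality of (a) is an equality there. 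Integrating, both $\E(F_\varepsilon)$ and the $R=\mathrm{id}$ term $\sum_a\langle[\omega_a^\varepsilon],F_\varepsilon^*[\sigma_a]\rangle$ of $\mathcal{I}_\varepsilon([F_\varepsilon])$ equal $c_0\,\varepsilon$ up to the contribution of the exceptional regions, for a positive constant $c_0$ depending only on $(\T/\{\pm1\},g_\T)$ and the bundle. As each exceptional region has $g_\varepsilon$-volume $o(\varepsilon)$ with $F_\varepsilon$ of controlled energy there, and as $(g_\varepsilon,\omega_a^\varepsilon)$ departs from the Gibbons--Hawking model over the main region only by amounts controlled in Foscolo's estimates, all corrections are $o(\varepsilon)$, so $\E(F_\varepsilon)=c_0\varepsilon(1+o(1))$ and $\sum_a\langle[\omega_a^\varepsilon],F_\varepsilon^*[\sigma_a]\rangle=c_0\varepsilon(1+o(1))$. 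Combining with $\sum_a\langle[\omega_a^\varepsilon],F_\varepsilon^*[\sigma_a]\rangle\le\mathcal{I}_\varepsilon([F_\varepsilon])\le\E(F_\varepsilon)$ yields $\mathcal{I}_\varepsilon([F_\varepsilon])=c_0\varepsilon(1+o(1))$, which is $>0$ for small $\varepsilon$ and tends to $0$, giving (iii), and $\E(F_\varepsilon)/\mathcal{I}_\varepsilon([F_\varepsilon])\to1$, giving (iv).

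The pointwise inequality and the definition of $\mathcal{I}_\varepsilon$ are essentially linear algebra, and the bare construction of $F_\varepsilon$ is routine; I expect the substance, and the main obstacle, to be in step (c). One must reread Foscolo's gluing construction closely enough to control the number, type and exact scale of the model pieces, the fine behaviour of the potential $W_\varepsilon$ near its singularities, and the rate at which $(g_\varepsilon,\omega_a^\varepsilon)$ converges to the Gibbons--Hawking model, in order to be certain that every correction to $\E(F_\varepsilon)$ and to $\mathcal{I}_\varepsilon([F_\varepsilon])$ is genuinely of lower order than $\varepsilon$; one must also check that the $8$ orbifold points obstruct neither the smoothness of $F_\varepsilon$ nor the evaluation of $f^*[\sigma_a]$.
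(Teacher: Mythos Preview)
Your proposal is correct and follows essentially the same three-step architecture as the paper: a pointwise Lichnerowicz-type inequality yielding the homotopy invariant and the bound $\E\ge\mathcal{I}_\varepsilon$ (the paper's Theorem~5.3), construction of $F_\varepsilon$ by gluing the Gibbons--Hawking projection to collapse maps on the ALF model pieces (Propositions~4.4 and~4.6), and the energy computation showing pointwise equality on the Gibbons--Hawking bulk with $o(\varepsilon)$ corrections elsewhere (Section~6).

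Two minor differences are worth flagging. First, you obtain the pointwise inequality via $\omega_a\wedge\alpha\wedge\beta=g(J_a\alpha^\sharp,\beta^\sharp)\vol$ together with Cauchy--Schwarz and AM--GM, whereas the paper writes $df$ as a $3\times 4$ matrix in an adapted frame and displays $\|df\|^2-\tau(df)$ explicitly as a sum of squares; both give the same bound, but the paper's computation also pins down the equality case (its Lemma~5.1), which it then invokes as Theorem~5.2 to get exact equality on the Gibbons--Hawking region rather than recomputing as you do in (c). Second, you define $\mathcal{I}_\varepsilon$ with a supremum over $SO(3)$, while the paper simply takes $\mathcal{I}_\varepsilon([f])=\sum_i\int_X\tilde\omega_{\varepsilon,i}\wedge f^*\Theta_i$; your version is never smaller and is still $\le\E$, so your squeeze argument for (iii)--(iv) works unchanged, but the sup is not needed. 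On (c), what you anticipate having to verify is exactly what the paper carries out: it gets $O(\varepsilon^{11/5})$ for the bubble contributions (Proposition~6.5) and $O(\varepsilon^{11/10})$ for the discrepancy between $g_{\bdomega_\varepsilon}$ and $g_{\tilde\bdomega_\varepsilon}$ (Proposition~6.7), both of which are $o(\varepsilon)$ against the main term $3\pi\varepsilon\,\vol_{g_{\boldsymbol{\theta}}}(\T)$.
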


This paper is organized as follows. 
In Section \ref{sec HK triple}, we review the definition and the fundamentals of hyper-K\"ahler manifolds of dimension $4$. 
In Section \ref{sec Foscolo}, we review the Gibbons-Hawking ansatz and the construction of collapsing families of hyper-K\"ahler metrics constructed in \cite{Foscolo2019}. 
Then we consider smooth maps from the $K3$ surface to $\T/\{ \pm 1\}$ and construct the invariants for homotopy classes, and construct $F_\varepsilon$ in Section \ref{sec map}, 
and show that the invariants give the lower bound for the Dirichlet energy in Section \ref{sec cal}. 
By the results of \cite{Foscolo2019}, $F_\varepsilon$ are 
approximation maps of the Gromov-Hausdorff convergence $(X,g_\varepsilon)\to (\T/\{ \pm 1\},g_\T)$. 
Moreover, in Section \ref{sec main estimate}, we give some estimates of the energy of $F_\varepsilon$ and show $({\rm ii, iii})$ of Theorem \ref{thm main intro}.

\noindent
{\bf Acknowledgement.}
This work was supported by JSPS KAKENHI Grant Numbers 19K03474, 23K20210, 24K06717 and 24H00183.

\section{Hyper-K\"ahler triples}\label{sec HK triple}
We define hyper-K\"ahler triples on smooth manifolds of dimension $4$. We follow the definition by \cite{Foscolo2019}. 
\begin{definition}
\normalfont
Let $V$ be a $4$-dimensional real vector space 
and $\boldsymbol{\omega}=(\omega_1,\omega_2,\omega_3)\in\Lambda^2V^*\otimes\R^3$. 
Then $\boldsymbol{\omega}$ is an $SU(2)$-{\it structure} if 
\begin{align*}
\omega_1^2&=\omega_2^2=\omega_3^2\neq 0,\\
\omega_1\wedge\omega_2
&=\omega_2\wedge\omega_3
=\omega_3\wedge\omega_1=0.
\end{align*}
\end{definition}
If $\bdomega$ is an $SU(2)$-structure on $V$, then 
we can define a symmetric tensor $S_{\boldsymbol{\omega}}\colon V\times V\to \R$ by 
\begin{align*}
\frac{1}{2}S_{\boldsymbol{\omega}}(u,v)\omega_1^2:=\iota_u\omega_1\wedge\iota_v\omega_2\wedge\omega_3.
\end{align*}
We can see that $S_{\boldsymbol{\omega}}$ 
is positive or negative definite, then we define a positive definite symmetric tensor $g_{\bdomega}\colon V\times V\to \R$ by 
\[ g_{\bdomega}:=
\left\{
\begin{array}{cl}
S_{\boldsymbol{\omega}} & (\mbox{ if }S_{\boldsymbol{\omega}}\mbox{ is positive}), \\
-S_{\boldsymbol{\omega}} & (\mbox{ if }S_{\boldsymbol{\omega}}\mbox{ is negative}).
\end{array}
\right.
\]

\begin{rem}
\normalfont
If $\bdomega$ is an $SU(2)$-structure, then $-\bdomega$ is also  an $SU(2)$-structure and $S_{-\boldsymbol{\omega}}=-S_{\boldsymbol{\omega}}$. 
\end{rem}
\begin{rem}
\normalfont
In general, $\bdomega$ is an $SU(2)$-structure with $S_{\bdomega}>0$ iff 
there is an orthonormal basis $\{ e_0,e_1,e_2,e_3\}\subset V$ for  $g_{\bdomega}$ such that 
$\omega_1=e^0\wedge e^1+e^2\wedge e^3$, 
$\omega_2=e^0\wedge e^2+e^3\wedge e^1$, 
$\omega_3=e^0\wedge e^3+e^1\wedge e^2$, 
where $\{ e^0,e^1,e^2,e^3\}\subset V^*$ is the dual basis. 
\end{rem}
\begin{rem}
\normalfont
If $\boldsymbol{\omega}$ is an $SU(2)$-structure, 
then $\omega_1^2$ induces an orientation on $V$. 
Then the volume form $\vol_{g_{\bdomega}}$ is given by $\omega_1^2/2$ and the space of self-dual $2$ forms $\Lambda_+^2V^*$ is generated by $\omega_1,\omega_2,\omega_3$. 
\end{rem}
\begin{definition}
\normalfont
$\boldsymbol{\omega}=(\omega_1,\omega_2,\omega_3)\in\Lambda^2V^*\otimes\R^3$ is a {\it definite triple} if there is a positive definite symmetric matrix $Q_{\bdomega}=(Q_{ij})_{i,j=1,2,3}$ and an orientation $\mu_{\boldsymbol{\omega}}\in\Lambda^4V^*\setminus\{ 0\}$ such that 
\begin{align*}
\omega_i\wedge\omega_j&=Q_{ij}\mu_{\boldsymbol{\omega}},\\
\det(Q_{\bdomega})&=1.
\end{align*}
Then there is an $SU(2)$-structure 
$\boldsymbol{\omega}'=(\omega'_1,\omega'_2,\omega'_3)$ such that ${\rm span}\{ \omega_1,\omega_2,\omega_3\}={\rm span}\{ \omega'_1,\omega'_2,\omega'_3\}$. 
Then we obtain a metric $g_{\boldsymbol{\omega}'}$. 
Here, we put $g_{\boldsymbol{\omega}}:=g_{\boldsymbol{\omega}'}$, which is independent of the choice of $\boldsymbol{\omega}'$. 
\end{definition}
\begin{rem}
\normalfont
If $\boldsymbol{\omega}$ is a definite triple, 
then ${\rm span}\{ \omega_1,\omega_2,\omega_3\}$ is 
the space of self-dual $2$-forms and the volume form of 
$g_{\boldsymbol{\omega}}$ is $\mu_{\boldsymbol{\omega}}/2$. 
It is obvious that $SU(2)$-structures are 
definite triples. 
\end{rem}
\begin{definition}
\normalfont
Let $X$ be a smooth manifold of dimension $4$ 
and $\boldsymbol{\omega}=(\omega_1,\omega_2,\omega_3)\in\Omega^2(X)\otimes\R^3$. 
Then $\boldsymbol{\omega}$ is a {\it hyper-K\"ahler triple} on $X$ if $\boldsymbol{\omega}_x$ is an $SU(2)$-structure on $T_xX$ for all $x\in X$ and 
\begin{align*}
d\omega_1&=d\omega_2=d\omega_3 = 0.
\end{align*}
Moreover, 
the induced Riemannian metric 
$g_{\boldsymbol{\omega}}$ is called a {\it hyper-K\"ahler metric}. 
Similarly, $\boldsymbol{\omega}$ is said to be a {\it definite triple} if $\boldsymbol{\omega}_x$ is a definite triple on $T_xX$ for all $x\in X$. 
\end{definition}

Here, we show some elementary estimates for metrics for the following sections. 
Recall that an inner product $g$ on $V$ induces 
the inner product on $\Lambda^k V^*$ naturally. 
We denote by $|\cdot |_g$ the induced norm with respect to $g$. 
For a triple of $k$-form $\bdeta=(\eta_1,\eta_2,\eta_3)\in\Lambda^kV^*\otimes \R^3$, 
we put $|\bdeta|_g:=\max_i|\eta_i|_g$. 

It is easy to show the next lemma. 
\begin{lem}
Let $V$ be a vector space and $g$ be a positive definite inner product on $V$. There is a positive constant $C$ depending only on $\dim (V)$ such that $|\alpha \wedge\beta|_g\le C|\alpha|_g|\beta|_g$ and $|\iota_u\alpha|_g\le C|u|_g|\alpha|_g$ for any $u\in V$, $\alpha\in\Lambda^k$ and $\beta\in\Lambda^l$. 
\label{lem norm asso}
\end{lem}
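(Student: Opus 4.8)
The plan is to reduce the statement to an elementary estimate in a $g$-orthonormal frame, where both operations become bilinear maps whose structure constants are bounded purely in terms of $\dim(V)$.

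First I would fix a $g$-orthonormal basis $e_1,\dots,e_n$ of $V$, with $n=\dim(V)$, and dual basis $e^1,\dots,e^n$. The point I would use is that the inner product induced by $g$ on $\Lambda^k V^*$ is characterized by the fact that the monomials $e^I:=e^{i_1}\wedge\cdots\wedge e^{i_k}$, indexed by strictly increasing multi-indices $I=(i_1<\cdots<i_k)$, form an orthonormal basis. Hence for $\alpha=\sum_I\alpha_I e^I$ one has $|\alpha|_g^2=\sum_I\alpha_I^2$, and the Cauchy--Schwarz inequality gives $\sum_I|\alpha_I|\le\binom{n}{k}^{1/2}|\alpha|_g\le 2^{n/2}|\alpha|_g$, uniformly in $k$.

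Next, for the wedge estimate I would write $\alpha=\sum_I\alpha_I e^I\in\Lambda^k$ and $\beta=\sum_J\beta_J e^J\in\Lambda^l$ and expand $\alpha\wedge\beta=\sum_{I,J}\alpha_I\beta_J\,e^I\wedge e^J$. Each $e^I\wedge e^J$ is either $0$ (when $I\cap J\neq\emptyset$) or $\pm e^{I\cup J}$, so $|e^I\wedge e^J|_g\le 1$; the triangle inequality together with the previous bound then yields
\begin{align*}
|\alpha\wedge\beta|_g\le\sum_{I,J}|\alpha_I|\,|\beta_J|=\Big(\sum_I|\alpha_I|\Big)\Big(\sum_J|\beta_J|\Big)\le 2^{n}\,|\alpha|_g\,|\beta|_g.
\end{align*}
For the interior product I would expand $u=\sum_i u_i e_i$, noting $\sum_i|u_i|\le n^{1/2}|u|_g$, and use that $\iota_{e_i}e^I$ is $0$ (if $i\notin I$) or $\pm e^{I\setminus\{i\}}$ (if $i\in I$), hence of norm $\le 1$; the same computation gives
\begin{align*}
|\iota_u\alpha|_g\le\Big(\sum_i|u_i|\Big)\Big(\sum_I|\alpha_I|\Big)\le n^{1/2}2^{n/2}\,|u|_g\,|\alpha|_g\le 2^{n}\,|u|_g\,|\alpha|_g.
\end{align*}
Thus $C=2^n$ serves for both claims (a sharper choice being $C=\max_{0\le k,l\le n}\binom{n}{k}^{1/2}\binom{n}{l}^{1/2}$).

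I do not expect any genuine obstacle here: the proof is essentially bookkeeping in an orthonormal frame. The only point worth stating carefully is the asserted uniformity of $C$ — that it depends on $\dim(V)$ alone, and not on the degrees $k,l$ nor on the particular $u,\alpha,\beta$. This is automatic from the argument above, since the degrees range over the finite set $\{0,\dots,n\}$ and every inequality was expressed solely through $g$-norms.
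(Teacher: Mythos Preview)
Your argument is correct: the reduction to an orthonormal frame and the triangle-inequality bookkeeping are exactly what is needed, and the constant $C=2^{\dim V}$ you obtain depends only on $\dim(V)$. The paper itself omits the proof, stating only that it is easy to show, so there is nothing further to compare.
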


For $A\in M_3(\R)$, put 
\begin{align*}
|A|:=\max\{ |\lambda|\in\R|\, \lambda\mbox{ is an eigenvalue of }A\}.
\end{align*}
This is so-called the operator norm and we always have $|AB|=|A||B|$ for $A,B\in M_3(\R)$. 
For symmetric tensors $g_0,g_1$, we write $g_0\le g_1$ if $g_0(u,u)\le g_1(u,u)$ for any $u$. 

\begin{lem}
Let $V$ be a vector space of dimension $4$, 
$\bdomega$ be a definite triple, $\tilde{\bdomega}$ be an $SU(2)$-structure. 
There are a sufficiently large constant $C>0$ and a sufficiently small constant $\hat{\delta}>0$ such that the following hold.  
For any $\delta_1,\delta_2\in (0,\hat{\delta}]$, if $|\bdomega-\tilde{\bdomega}|_{g_{\bdomega}}\le \delta_1$ and $|Q_{\bdomega}-I_3|\le\delta_2$, where $I_3$ is the identity matrix in $M_3(\R)$, 
then we have $(1-C(\delta_1+\delta_2))g_{\bdomega}\le g_{\tilde{\bdomega}} \le (1+C(\delta_1+\delta_2))g_{\bdomega}$.
Similarly, if $|\bdomega-\tilde{\bdomega}|_{g_{\tilde{\bdomega}}}\le \delta_1$, then we have $(1-C\delta_1)g_{\bdomega}\le g_{\tilde{\bdomega}} \le (1+C\delta_1)g_{\bdomega}$. 
\label{lem norm met}
\end{lem}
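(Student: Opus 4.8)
The plan is to reduce everything to a compactness argument on the space of $SU(2)$-structures, using the fact that the metric depends continuously—in fact smoothly—on the triple. First I would observe that the statement is invariant under the rescaling $\bdomega \mapsto t\bdomega$, $g_{\bdomega}\mapsto t\,g_{\bdomega}$ (for $t>0$), and under the action of $GL(V)$, so after normalizing we may assume that $g_{\bdomega}$ is a fixed reference inner product $g_0$ on $V\cong\R^4$ and that $\tilde{\bdomega}$ ranges over the set $\mathcal{S}$ of all $SU(2)$-structures whose distance to the standard one $\bdomega_0=(e^{01}+e^{23},\,e^{02}+e^{31},\,e^{03}+e^{12})$ is at most $\hat\delta$ in the $g_0$-norm. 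The key point is that for a definite triple $\bdomega$ the associated metric $g_{\bdomega}$ is obtained algebraically from the entries of $Q_{\bdomega}$ and from the $SU(2)$-structure spanning $\mathrm{span}\{\omega_i\}$; the map $\bdeta\mapsto g_{\bdeta}$ is smooth on the open set of definite triples, as one sees directly from the defining formula $\tfrac12 S_{\bdeta}(u,v)\,\eta_1^2=\iota_u\eta_1\wedge\iota_v\eta_2\wedge\eta_3$ together with the normalization by $\det(Q_{\bdeta})$.

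The main step is then a first-order (mean value) estimate. Consider the smooth map $\Phi\colon (\bdomega,\tilde{\bdomega})\mapsto g_{\tilde{\bdomega}}^{-1}g_{\bdomega}$, viewed as an endomorphism of $V$, defined on a neighborhood of the diagonal inside (definite triples)$\times$($SU(2)$-structures); at the diagonal it equals the identity. Because $\Phi$ is $C^1$ and the relevant parameter region is relatively compact (after the normalizations above), there is a uniform Lipschitz constant $C$ such that $\|\Phi(\bdomega,\tilde{\bdomega})-\mathrm{id}\|\le C\,(\text{distance from the diagonal})$. The distance from the diagonal is controlled by $|\bdomega-\tilde{\bdomega}|_{g_{\bdomega}}\le\delta_1$ together with $|Q_{\bdomega}-I_3|\le\delta_2$: indeed, if $\bdomega$ were itself an $SU(2)$-structure then $Q_{\bdomega}=I_3$ automatically, so the quantity $|Q_{\bdomega}-I_3|\le\delta_2$ measures exactly how far $\bdomega$ is from being an $SU(2)$-structure, and $\delta_1$ measures how far $\tilde{\bdomega}$ is from $\bdomega$. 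Hence the eigenvalues of $g_{\tilde{\bdomega}}^{-1}g_{\bdomega}$ all lie in $[1-C(\delta_1+\delta_2),\,1+C(\delta_1+\delta_2)]$ once $\hat\delta$ is small enough that we stay in the compact region, which is precisely the asserted two-sided bound $(1-C(\delta_1+\delta_2))g_{\bdomega}\le g_{\tilde{\bdomega}}\le (1+C(\delta_1+\delta_2))g_{\bdomega}$. For the last sentence of the lemma, when $\bdomega$ is measured in the $g_{\tilde{\bdomega}}$-norm instead, one runs the same argument with the roles adjusted; since $\tilde{\bdomega}$ is an $SU(2)$-structure there is no $Q$-defect term for it, and controlling $|\bdomega-\tilde{\bdomega}|_{g_{\tilde{\bdomega}}}\le\delta_1$ alone forces $|Q_{\bdomega}-I_3|=O(\delta_1)$ by Lemma \ref{lem norm asso} applied to the defining wedge products, so the single parameter $\delta_1$ suffices and we get $(1-C\delta_1)g_{\bdomega}\le g_{\tilde{\bdomega}}\le(1+C\delta_1)g_{\bdomega}$.

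The step I expect to require the most care is making the normalization rigorous: the reduction to a compact parameter set uses $GL(V)$-equivariance of $\bdomega\mapsto g_{\bdomega}$ and scale-invariance, and one must check that the constant $C$ produced on the normalized region does not secretly depend on the normalization that was chosen, i.e. that the final inequality—being an inequality between bilinear forms—is genuinely invariant under the group action and the rescaling. Once that is in place, the Lipschitz/mean-value estimate is routine, and the bookkeeping translating "distance from the diagonal of triples" into the two given quantities $\delta_1,\delta_2$ is elementary multilinear algebra handled by Lemma \ref{lem norm asso}. An alternative, slightly more hands-on route that avoids the abstract compactness is to use the orthonormal-frame description in the second Remark: write $\tilde{\bdomega}$ in a $g_{\bdomega}$-orthonormal frame, expand $\bdomega=\tilde{\bdomega}+(\bdomega-\tilde{\bdomega})$, and compute $S_{\bdomega}$ and $Q_{\bdomega}$ perturbatively in the $O(\delta_1)$ error and the $O(\delta_2)$ deviation of $Q_{\bdomega}$ from $I_3$; this gives the constant $C$ explicitly in terms of $\dim V=4$ but is computationally heavier.
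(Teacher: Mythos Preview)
Your compactness--Lipschitz approach is correct and proves the lemma, but it is not the route the paper takes. The paper argues directly: it introduces the positive square root $P=Q_{\bdomega}^{1/2}$, observes $|P-I_3|\le\delta_2$, sets $\bdomega':=P^{-1}\bdomega$ so that $\bdomega'$ is an $SU(2)$-structure with $g_{\bdomega'}=g_{\bdomega}$, and then bounds $|\bdomega'-\tilde{\bdomega}|_{g_{\bdomega}}=O(\delta_1+\delta_2)$. From there it compares $S_{\bdomega'}$ and $S_{\tilde{\bdomega}}$ term by term using the trilinear defining formula $\tfrac12 S_{\bdeta}(u,v)\eta_1^2=\iota_u\eta_1\wedge\iota_v\eta_2\wedge\eta_3$ together with Lemma~\ref{lem norm asso}, obtaining $|g_{\bdomega}(u,v)-g_{\tilde{\bdomega}}(u,v)|=O(\delta_1+\delta_2)|u|_{g_{\bdomega}}|v|_{g_{\bdomega}}$ and hence the two-sided inequality. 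The second assertion is handled identically after noting $|Q_{\bdomega}-I_3|=O(\delta_1)$. In short, the paper's proof is exactly your ``hands-on route'', carried out via the auxiliary $SU(2)$-structure $\bdomega'$ rather than by expanding $\bdomega=\tilde{\bdomega}+(\bdomega-\tilde{\bdomega})$.

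Your main argument buys conceptual clarity (smoothness of $\bdeta\mapsto g_{\bdeta}$ plus compactness) at the cost of the normalization step you already flag as delicate; the paper's buys explicitness and avoids any appeal to compactness, at the cost of a short computation. One small remark: your bound on the distance from $(\bdomega,\tilde{\bdomega})$ to the diagonal is not sharp---the diagonal point $(\tilde{\bdomega},\tilde{\bdomega})$ is at distance $\le\delta_1$, so your Lipschitz estimate in fact yields $C\delta_1$ with $C=C(\hat\delta)$, the hypothesis $|Q_{\bdomega}-I_3|\le\delta_2\le\hat\delta$ entering only to guarantee relative compactness of the parameter region. This is stronger than the stated $C(\delta_1+\delta_2)$ and of course still proves the lemma.
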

\begin{proof}
Let $P$ be the positive symmetric square root of $Q_{\bdomega}$. Since $P+I_3$ is positive and $|(P+I_3)^{-1}|\le 1$, we have 
\begin{align*}
|P-I_3|=|(P+I_3)^{-1}||Q-I_3|
\le \delta_2.
\end{align*}
If we put $P^{-1}=(P^{ij})$ and $\bdomega':=P^{-1}\bdomega=(\sum_{j=1}^3P^{ij}\omega_j)_{i=1,2,3}$, 
then $Q_{\bdomega'}=I_3$, hence $\bdomega'$ is an $SU(2)$-structure. 
Then the metric $g_{\bdomega}$ is given by 
$S_{\bdomega'}$ or $-S_{\bdomega'}$. 
We have 
\begin{align*}
|\bdomega'-\tilde{\bdomega}|_{g_{\bdomega}}
&=|P^{-1}\bdomega-\tilde{\bdomega}|_{g_{\bdomega}}\\
&\le |P^{-1}\bdomega-\bdomega|_{g_{\bdomega}}+|\bdomega-\tilde{\bdomega}|_{g_{\bdomega}}\\
&= |(I_3-P)\bdomega'|_{g_{\bdomega'}}+|\bdomega-\tilde{\bdomega}|_{g_{\bdomega}}
= O(\delta_2+\delta_1).
\end{align*}
Next, we give the upper bound for $|g_{\bdomega'}-g_{\tilde{\bdomega}}|_{g_{\bdomega}}$. 
For $u,v\in V$, Lemma \ref{lem norm asso} yields 
\begin{align*}
\frac{1}{2}|S_{\bdomega'}(u,v)(\omega'_1)^2-S_{\tilde{\bdomega}}(u,v)\tilde{\omega}_1^2|_{g_{\bdomega}}
&=\left| \iota_u\omega'_1\wedge\iota_v\omega'_2\wedge\omega'_3
- \iota_u\tilde{\omega}_1\wedge\iota_v\tilde{\omega}_2\wedge\tilde{\omega}_3\right|_{g_{\bdomega}}\\
&\le C^2\left| \iota_u\omega'_1-\iota_u\tilde{\omega}_1\right|_{g_{\bdomega}}|\iota_v\omega'_2|_{g_{\bdomega}}|\omega'_3|_{g_{\bdomega}}\\
&\quad\quad 
+C^2|\iota_u\tilde{\omega}_1|_{g_{\bdomega}}\left| \iota_u\omega'_2-\iota_u\tilde{\omega}_2\right|_{g_{\bdomega}}|\omega'_3|_{g_{\bdomega}}\\
&\quad\quad 
+ C^2|\iota_u\tilde{\omega}_1|_{g_{\bdomega}}|\iota_v\tilde{\omega}_2|_{g_{\bdomega}}|\omega'_3-\tilde{\omega}_3|_{g_{\bdomega}}.
\end{align*}
Then by Lemma \ref{lem norm asso}, 
\begin{align*}
\frac{1}{2}|S_{\bdomega'}(u,v)(\omega'_1)^2-S_{\tilde{\bdomega}}(u,v)\tilde{\omega}_1^2|_{g_{\bdomega}}
&\le O(\delta_1+\delta_2)|u|_{g_{\bdomega}}|v|_{g_{\bdomega}}.
\end{align*}
Similarly, we also have $|(\omega'_1)^2-\tilde{\omega}_1^2|_{g_{\bdomega}}= O(\delta_1+\delta_2)$, which gives 
\begin{align*}
|S_{\bdomega'}(u,v)-S_{\tilde{\bdomega}}(u,v)|
&= O(\delta_1+\delta_2)|u|_{g_{\bdomega}}|v|_{g_{\bdomega}}.
\end{align*}
If $\delta_1,\delta_2$ are sufficiently small, 
then the symmetric tensor $S_{\bdomega'}$ is close to $S_{\tilde{\bdomega}}$, hence $S_{\bdomega'}$ is positive (resp.~negative) if 
$S_{\tilde{\bdomega}}$ is positive (resp.~negative). 
Consequently, we have 
\begin{align}
|g_{\bdomega}(u,v)-g_{\tilde{\bdomega}}(u,v)|
&= O(\delta_1+\delta_2)|u|_{g_{\bdomega}}|v|_{g_{\bdomega}},\label{ineq g omega' - g tilde omega}
\end{align}
which gives  
$(1-C(\delta_1+\delta_2))g_{\bdomega}\le g_{\tilde{\bdomega}} \le (1+C(\delta_1+\delta_2))g_{\bdomega}$ for a sufficiently large positive constant $C$. 

If $|\bdomega-\tilde{\bdomega}|_{g_{\tilde{\bdomega}}}\le \delta_1$ for sufficiently small $\delta_1$, then $|Q_{\bdomega}-I_3|=O(\delta_1)$. We define $\bdomega'$ and $P$ in the same way, then we obtain 
\begin{align*}
|\bdomega'-\tilde{\bdomega}|_{g_{\tilde{\bdomega}}}
&=|P^{-1}\bdomega-P^{-1}\tilde{\bdomega}|_{g_{\tilde{\bdomega}}}
+|(P^{-1}-I_3)\tilde{\bdomega}|_{g_{\tilde{\bdomega}}}\\
&\le (1+O(\delta_1))|\bdomega-\tilde{\bdomega}|_{g_{\tilde{\bdomega}}}
-|P^{-1}-I_3|O(1)=O(\delta_1).
\end{align*}
The rest of the argument is similar. 
\end{proof}

\section{Hyper-K\"ahler metrics on the $K3$ surface collapsing to $3$-dimensional spaces}\label{sec Foscolo}
We review a family of 
hyper-K\"ahler metrics on the $K3$ surface 
collapsing to $3$-dimensional spaces 
constructed by Foscolo in \cite{Foscolo2019}. 

\subsection{Gibbons-Hawking ansatz}\label{subsec GH}
First of all, we review the construction of 
hyper-K\"ahler $4$ manifolds equipped with 
tri-hamiltonian $U(1)$-actions introduced by \cite{GH1978}. 

On the $3$-dimensional torus $\T:=\R^3/\Z^3$, 
there are closed $1$-forms $\theta_1,\theta_2,\theta_3\in\Omega^1(\T)$ which form a frame of the cotangent bundle $T^*\T$. 
We put 
\begin{align*}
\boldsymbol{\theta}=(\theta_1,\theta_2,\theta_3)
\end{align*}
and define a flat Riemannian metric by $g_{\boldsymbol{\theta}}:=\theta_1^2+\theta_2^2+\theta_3^2$.  
Let $\hat{Y}\subset \T$ be an open subset and 
$h\in C^\infty(\hat{Y})$ a smooth positive valued harmonic function on $\hat{Y}$, then we can see that $*dh$ is a closed 
$2$-form on $\hat{Y}$, where $*$ is the Hodge star operator with respect to $g_{\boldsymbol{\theta}}$. 
Here, we assume that the cohomology class 
represented by $*dh$ is in $H^2(\hat{Y},\Z)$. 
Then there is a principal $U(1)$-bundle $\hat{\mu}_h\colon 
\hat{M}_h\to \hat{Y}$ and a $U(1)$-connection $\sqrt{-1}\theta\in \Omega^1(\hat{M}_h,\sqrt{-1}\R)$ whose curvature form is 
$\sqrt{-1}\hat{\mu}_h^*(*dh)$, that is, we have 
$d\theta=\hat{\mu}_h^*(*dh)$. 
Here, the pullback of $h$ and $\theta_i$ 
by $\hat{\mu}_h$ can be regarded as 
functions or $1$-forms on $\hat{M}_h$, using the same notation for brevity. 
Then we have closed $2$-forms 
\begin{align*}
\hat{\eta}_{h,1}&:=\theta_1\wedge \theta+h\cdot \theta_2\wedge \theta_3,\\
\hat{\eta}_{h,2}&:=\theta_2\wedge\theta+h\cdot \theta_3\wedge \theta_1,\\
\hat{\eta}_{h,3}&:=\theta_3\wedge \theta+h\cdot \theta_1\wedge \theta_2
\end{align*}
with relations 
\begin{align*}
(\hat{\eta}_{h,1})^2&=(\hat{\eta}_{h,2})^2=(\hat{\eta}_{h,3})^2,\\
\hat{\eta}_{h,i}\wedge\hat{\eta}_{h,j}&=0\quad (i\neq j)
\end{align*}
on $M_h$. 
Then 
\begin{align*}
\hat{\boldsymbol{\eta}}_h:=\left( \hat{\eta}_{h,1},\,
\hat{\eta}_{h,2},\,
\hat{\eta}_{h,3}\right)
\end{align*}
is a hyper-K\"ahler triple on $\hat{M}_h$ 
and induces a hyper-K\"ahler metric 
\begin{align*}
g_{\hat{\boldsymbol{\eta}}_h}=h^{-1}\theta^2+h\sum_{i=1}^3\theta_i^2.
\end{align*}

\subsection{Hyper-K\"ahler metrics on the $K3$ surface}\label{subsec Foscolo}
There is a natural $\{ \pm 1\}$-action on $\T$ 
defined by $\pm[x]:=[\pm x]$, where 
$[x]\in\T$ is the equivalence class represented by $x\in\R^3$. 
Then the fixed point set is given by 
\begin{align*}
\T^{\{ \pm 1\}}&=\left\{ [a_1,a_2,a_3]\in\T\left|\, a_i=0\mbox{ or }\frac{1}{2}
\right.\right\}
=:\{ q_1,\ldots,q_8\}.
\end{align*}
Let $n$ be a nonnegative integer. 
We take finitely many mutually distinct $2n$ 
points 
$p_1,\ldots,p_n,-p_1,\ldots,-p_n\in \T\setminus\T^{\{ \pm 1\}}$ and 
put 
\begin{align*}
S&:=\{ q_1,\ldots,q_8,p_1,\ldots,p_n,-p_1,\ldots,-p_n\},\\
\T^*&:=\T\setminus S.
\end{align*}
For 
\begin{align*}
\bm{m}&:=(m_1,\ldots,m_8)\in (\Z_{\ge 0})^8,\\
\bm{k}&:=(k_1,\ldots,k_n)\in (\Z_{>0})^n,
\end{align*}
we say that 
$(\bm{m},\bm{k})=(m_1,\ldots,m_8,k_1,\ldots,k_n)$ 
satisfy the {\it balancing condition} if 
\begin{align}
\sum_{j=1}^8m_j+\sum_{i=1}^nk_i=16.\label{eq balance}
\end{align}

Let $d_{g_{\boldsymbol{\theta}}}$ be the Riemannian distance of $g_{\boldsymbol{\theta}}$. 
For $r>0$, put 
\begin{align*}
B_\T(p,r):=\{ x\in\T|\, d_{g_{\boldsymbol{\theta}}}(p,x)<r\}.
\end{align*}
If we put 
\begin{align*}
\rho_p(x):= d_{g_{\boldsymbol{\theta}}}(p,x), 
\end{align*}
then $1/\rho_p$ is a harmonic function for $g_{\boldsymbol{\theta}}$ on $B_\T(p,r)\setminus\{ p\}$, where $r>0$ is a sufficiently small positive number. 
For an open set $U\subset \T$, a function $f\colon U\to \R$, $p\in\T$ and $k\in\R$, we write 
\begin{align*}
f\sim_p \frac{k}{\rho_p},
\end{align*}
if there are a sufficiently small $\delta>0$ and a smooth function 
$\varphi\colon B_\T(p,\delta)\to \R$ such that 
$B_\T(p,\delta)\setminus \{ p\}\subset U$ and 
\begin{align*}
f = \frac{k}{\rho_p}+\varphi
\end{align*}
on $B_\T(p,\delta)\setminus \{ p\}$.

\begin{thm}[{\cite[Proposition 4.3]{Foscolo2019}}]
Assume that the balancing condition \eqref{eq balance} 
holds. Then there are a $\{ \pm 1\}$-invariant harmonic function $h\colon \T^*\to \R$ and a $U(1)$-connection 
$\sqrt{-1}\theta$ on $\hat{M}_h\to \hat{Y}=\T^*$ 
such that 
\begin{align*}
h&\sim_{p_i} \frac{k_i}{2\rho_{p_i}},\quad
h\sim_{-p_i} \frac{k_i}{2\rho_{-p_i}},\quad
h\sim_{q_j} \frac{2m_j-4}{2\rho_{q_j}},
\end{align*}
for all $i=1,\ldots,n$, $j=1,\ldots,8$ and 
\begin{align*}
[*dh]&\in H^2(\T^*,\Z)\\
d\theta&=\hat{\mu}_h^*(*dh).
\end{align*}
Moreover, there is 
a smooth map $\tau\colon \hat{M}_h\to M_h$ such that 
$\hat{\mu}_h\circ\tau=-\hat{\mu}_h$, $\tau^2=1$ and 
$\tau^*\theta=-\theta$. 
\label{thm harm pole}
\end{thm}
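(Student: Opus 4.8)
The strategy is to construct the harmonic function $h$ on $\T^*$ as a superposition of local Green's functions, check that the resulting $*dh$ has integral periods (exploiting the balancing condition), invoke the Gibbons--Hawking correspondence of Subsection \ref{subsec GH} to produce the circle bundle and connection, and finally build the involution $\tau$ from the $\{\pm 1\}$-symmetry of the data. First I would fix, for each singular point $p\in S$, a small ball $B_\T(p,\delta)$ on which $1/\rho_p$ is harmonic, and set
\begin{align*}
h_0:=\sum_{i=1}^n\frac{k_i}{2}\left(\frac{1}{\rho_{p_i}}+\frac{1}{\rho_{-p_i}}\right)+\sum_{j=1}^8\frac{2m_j-4}{2}\cdot\frac{1}{\rho_{q_j}},
\end{align*}
which is harmonic on $\T^*$, $\{\pm1\}$-invariant by construction, and has the prescribed singularity type $\sim_p k/\rho_p$ at each point of $S$. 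The issue is that $h_0$ need not be positive (indeed $2m_j-4$ can be negative) and need not have the right global behaviour; one corrects this by adding a harmonic function on all of $\T$. Since $\T$ is compact, harmonic functions are constants, so $h_0$ itself cannot simply be shifted — instead one must appeal to the existence result behind \cite[Proposition 4.3]{Foscolo2019}: the balancing condition $\sum m_j+\sum k_i=16$ is precisely what makes the distributional Laplacian $\Delta h_0$ have total mass zero on $\T$, i.e. $\sum_j(2m_j-4)/2+\sum_i 2\cdot k_i/2=16-16=0$ after accounting for the pair $\pm p_i$, so that a solution $h$ exists with $h>0$ away from $S$ for a suitable choice of the finitely many free points. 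I would present this positivity as the cited input rather than reprove it.

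Next I would verify the integrality $[*dh]\in H^2(\T^*,\Z)$. The group $H^2(\T^*,\Z)$ is generated by the classes of small linking $2$-spheres $\Sigma_p$ around each $p\in S$ (together with the image of $H^2(\T,\Z)$, which is trivial for $*dh$ since $*dh$ is exact away from $S$ and $\T$ has no relevant obstruction). The period of $*dh$ over $\Sigma_p$ equals, up to the normalisation in Subsection \ref{subsec GH}, the coefficient of $1/\rho_p$ in the expansion of $h$ near $p$: concretely $\int_{\Sigma_p}*d(k/\rho_p)$ is a fixed integer multiple of $k$ because $*d(1/\rho_p)$ is the solid-angle form. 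Thus the periods are $k_i$ at $\pm p_i$ and $2m_j-4$ at $q_j$ — all integers — so $[*dh]\in H^2(\T^*,\Z)$. This is the step where the factor $1/2$ in the stated singularities and the even integer $2m_j-4$ matter: they are chosen so the periods come out integral while the $\Z/2$ quotient near $q_j$ behaves correctly.

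With $h$ and the integral class in hand, the Gibbons--Hawking machinery of Subsection \ref{subsec GH} directly yields the principal $U(1)$-bundle $\hat\mu_h\colon\hat M_h\to\T^*$ and a connection $\sqrt{-1}\theta$ with $d\theta=\hat\mu_h^*(*dh)$. Finally, for the involution: the $\{\pm1\}$-action $\iota$ on $\T$ preserves $S$ (swapping $p_i\leftrightarrow -p_i$ and fixing each $q_j$), and since $h$ is $\{\pm1\}$-invariant while the orientation-reversing nature of $\iota$ on $\T$ gives $\iota^*(*dh)=-*dh$, the pullback bundle $\iota^*\hat M_h$ is isomorphic to the conjugate bundle $\overline{\hat M_h}$; choosing such an isomorphism covering $\iota$ and squaring it (adjusting by a gauge transformation so that $\tau^2=\mathrm{id}$, possible because the relevant automorphism group is connected) produces $\tau\colon\hat M_h\to\hat M_h$ with $\hat\mu_h\circ\tau=-\hat\mu_h$ and $\tau^*\theta=-\theta$. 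The main obstacle is genuinely the existence-and-positivity of $h$: the explicit sum $h_0$ has the right singularities and symmetry, but controlling its sign globally (equivalently, the positivity of the constant term, which depends delicately on the positions of the $p_i$) is the analytic heart of the argument, and I would quote \cite[Proposition 4.3]{Foscolo2019} for it rather than reproduce Foscolo's estimates.
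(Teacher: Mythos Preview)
The paper does not supply its own proof of this statement: Theorem~\ref{thm harm pole} is quoted verbatim from \cite[Proposition 4.3]{Foscolo2019} and is used as a black box, with the paper immediately moving on to the remark about the non-uniqueness of $(h,\theta)$ and then to the consequences for $\boldsymbol{\eta}_h$. So there is nothing to compare your argument against here.

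That said, two comments on your sketch. First, your formula for $h_0$ is not well-posed: the functions $1/\rho_p$ are defined and harmonic only on small balls $B_\T(p,\delta)$, not on all of $\T^*$, so the displayed sum does not define a global function. The standard device is to work instead with the Green's function $G_p$ of the flat Laplacian on $\T$, which is globally defined with a single pole at $p$ and satisfies $\Delta G_p=\delta_p-\vol_{g_{\boldsymbol\theta}}(\T)^{-1}$; the balancing condition \eqref{eq balance} is then exactly what makes the constant terms cancel in $\sum_i k_i(G_{p_i}+G_{-p_i})+\sum_j(2m_j-4)G_{q_j}$, so that the sum is genuinely harmonic on $\T^*$. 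Second, your identification of positivity of $h$ as ``the analytic heart of the argument'' is a misreading of the statement: Theorem~\ref{thm harm pole} does not assert $h>0$, and indeed $h$ need not be positive (the coefficients $2m_j-4$ can be negative). Positivity enters only later, for the shifted function $\varepsilon^{-1}+h$, and is handled separately in the proof of Theorem~\ref{thm k3 to t3}\,(ii) via \cite[Lemma 4.9]{Foscolo2019}. Once the global existence of $h$ is set up correctly, the rest of your outline---integrality of $[*dh]$ from the linking-sphere periods, the Gibbons--Hawking bundle and connection, and the involution $\tau$ from the $\{\pm1\}$-symmetry---is along the right lines.
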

\begin{rem}
\normalfont
For $S\subset \T$ and $(\bm{m},\bm{k})\in\Z^{n+8}$ 
with the balancing condition \eqref{eq balance}, 
we denote by $\F(S,\bm{m},\bm{k})$ the set consisting of 
$(h,\theta)$ satisfying the conclusion of 
Theorem \ref{thm harm pole}. 
Here, $(h,\theta)$ is not determined uniquely 
from $(S,\bm{m},\bm{k})$ since 
$h$ has the ambiguity of additive constants and $\theta$, 
up to the gauge equivalence, has 
the ambiguity of parallel $1$-forms on $\T$. 
\end{rem}

By the above Theorem, 
$\hat{\boldsymbol{\eta}}_h$ is $\tau$-invariant, 
accordingly, it induces a triple of $2$-forms 
$\boldsymbol{\eta}_h$ on 
the quotient manifold $M_h:=\hat{M}_h/\tau$. 
Moreover, $\hat{\mu}_h$ induces a map 
\begin{align*}
\mu_h\colon M_h\to \T^*/\{ \pm 1\}.
\end{align*}
Here, $\boldsymbol{\eta}_h$ defines a hyper-K\"ahler triple on the region where $h$ is positive. 

Denote by $\bar{p}\in\T/\{ \pm 1\}$ the image of 
$p\in\T$ by the quotient map $\T\to \T/\{ \pm 1\}$. 
We denote by $B_\T(\bar{p},r)\subset \T/\{ \pm 1\}$ the image of 
$B_\T(p,r)$. 
Put 
\begin{align*}
\delta_0:=\frac{1}{4}\inf\{ d_{g_{\boldsymbol{\theta}}}(x,y),{\rm inj}(g_{\boldsymbol{\theta}})|\, x,y\in S,\, x\neq y\}>0.
\end{align*}
For $p\in S$ and $r<2\delta_0$, 
the restriction of $\mu_h$ to $\mu_h^{-1}(B(p,r))$ is given as follows.

Let $\nu_{S^3}\colon S^3\to S^2$ be the Hopf fibration. 
Here, we regard $S^3$ as the unit sphere in $\C^2$ centered at the origin, then $\nu_{S^3}$ is given by 
\begin{align*}
\nu_{S^3}(z,w):=\left( |z|^2-|w|^2,\, 2{\rm Re}(zw),\, 2{\rm Im}(zw)\right).
\end{align*}
Then $\nu_{S^3}$ is the principal $U(1)$-bundle 
by the action 
\begin{align*}
(z,w)\cdot\lambda:=(z\lambda,w\lambda^{-1})
\end{align*}
for $(z,w)\in S^3$ and $\lambda\in U(1)$. 
Let $\Z_k\subset U(1)$ be the cyclic subgroup of order $k$. 
Then the map $\nu_{S^3}$ induces a $U(1)$-fiber bundle  $S^3/\Z_k\to S^2$. Here, we can define a $U(1)$-action preserving every fiber of $S^3/\Z_k\to S^2$ by $[z,w]_k\cdot\lambda:=[z\lambda^{1/k},w\lambda^{-1/k}]_k$ for $[z,w]_k\in S^3/\Z_k$ and $\lambda\in U(1)$, 
where $[z,w]_k$ is the equivalence class represented by $(z,w)$.

For an integer $k$, 
put $W^k:=S^3/\Z_k$ and put 
\begin{align*}
H^k:=W^k\times\R_+.
\end{align*}
Here, $W^{-k}=W^k$ as manifolds but the orientation of the  $U(1)$-action on $W^{-k}$ is reversed comparing with that of $W^k$. 
Then $\nu_{S^3}$ induces the map 
$\nu_{A_{k-1}}\colon H^k\to S^2\times \R_+$ defined by 
\begin{align*}
\nu_{A_{k-1}}([z,w]_k,r):= \left( \nu_{S^3}(z,w),\, r\right).
\end{align*}
If $k$ is even, then we have a $\{ \pm 1\}$-action 
on $H^k$ defined by 
\begin{align*}
(-1)\cdot ([z,w]_k,r):=([w,-z]_k,r).
\end{align*}
Then we obtain a map $\nu_{D_m} \colon H^{2(m-2)}/\{ \pm 1\}\to S^2/\{ \pm 1\}\times\R_+$ defined by 
\begin{align*}
\nu_{D_m}((\pm 1)( [z,w]_{2(m-2)},r)):= \left( \pm\nu_{S^3}(z,w),\, r\right) 
\in S^2/\{ \pm 1\}\times\R_+.
\end{align*}

For $0\le r_1<r_2<\delta_0$, put 
\begin{align*}
A_\T(p,r_1,r_2)
&:=\{ x\in\T|\, r_1<d_{g_{\boldsymbol{\theta}}}(p,x)<r_2\},
\end{align*}
and denote by $A_\T(\bar{p},r_1,r_2)$ the image of 
$A_\T(p,r_1,r_2)$ by the projection 
$\T\to \T/\{ \pm 1\}$. 
For an open interval $(a,b)\subset \R_+$, we put 
\begin{align*}
H^k_{(a,b)}:=W^k\times (a,b)\subset H^k.
\end{align*}
Note that $S^2\times \R_+$ can be isometrically 
embedded in $\R^3\setminus\{ 0\}$ by $(u,r)\mapsto ru$. 
If $h$ is given by Theorem \ref{thm harm pole}, 
then we have the identifications 
as $C^\infty$ manifolds
\begin{align}
\mu_h^{-1}\left( A_\T(\bar{p}_i,0,R)\right)\cong H^{k_i}_{(0,R)},\quad
\mu_h^{-1}\left( A_\T(\bar{q}_j,0,R)\right)\cong H^{2(m_j-2)}_{(0,R)}/\{ \pm 1\}\label{diff glue},
\end{align}
and isometries
\begin{align*}
A_\T(\bar{p}_i,0,R)\cong S^2\times (0,R),\quad
A_\T(\bar{q}_j,0,R)\cong S^2/\{ \pm 1\}\times (0,R).
\end{align*}
Here, the origin of $\R^3$ and $\R^3/\{ \pm 1\}$ are identified with $\bar{p}_i$ are $\bar{q}_j$, respectively. 
Moreover, under the identifications \eqref{diff glue}, 
we have the following identifications 
\begin{align}
\mu_h|_{\mu_h^{-1}\left( A_\T(\bar{p}_i,0,R)\right)}
&\cong \nu_{A_{k_i-1}}|_{H^{k_i}_{(0,R)}},\quad
\mu_h|_{\mu_h^{-1}\left( A_\T(\bar{q}_j,0,R)\right)}
\cong \nu_{D_{m_j}}|_{H^{2(m_j-2)}_{(0,R)}/\{ \pm 1\}},\label{diff glue moment}
\end{align}
as principal $U(1)$-bundles. 

Next, we obtain a smooth and compact $4$-manifold 
by gluing $M_h$ and the underlying manifolds of 
ALF gravitational instantons. 
An {\it ALF gravitational instanton} is 
a $4$-dimensional complete Riemannian manifold 
$(M,g)$ satisfying the following properties. 
\begin{itemize}
\setlength{\parskip}{0cm}
\setlength{\itemsep}{0cm}
 \item[$({\rm i})$] There is a compact set $K\subset M$, 
 positive constant $R>0$ 
and a diffeomorphism 
$\phi\colon H^k_{(R,\infty)}\to M\setminus K$ 
or $\phi\colon H^{2(m-2)}_{(R,\infty)}/\{ \pm 1\}\to M\setminus K$. 
The former case is said to be {\it of type $A_{k-1}$}, 
the latter case is said to be {\it of type $D_m$}. 
 \item[$({\rm ii})$] 
 Let $g_k$ be the hyper-K\"ahler metric on $H^k_{(R,\infty)}$ (or $H^k_{(R,\infty)}/\{\pm 1\}$) given by the harmonic function $x\mapsto 1+k/2|x|$ via Gibbons-Hawking ansatz and $r$ be the composition of the projection 
$H^k_{(R,\infty)}\to (R,\infty)$ (or $H^k_{(R,\infty)}/\{\pm 1\}\to (R,\infty)$) and $\phi^{-1}$. 
Then $\phi^*g$ satisfies the following  
 asymptotic condition, 
\begin{align*}
|\nabla_{g_k}^l(\phi^*g-g_k)|_{g_k}&=O(r^{-3-l})\quad (\mbox{type }A_k),\\
|\nabla_{g_{2m-2}}^l(\phi^*g-g_{2m-2})|_{g_{2m-2}}&=O(r^{-3-l})\quad (\mbox{type }D_m),
\end{align*}
for each $l\ge 0$.  
 \item[$({\rm iii})$] There is a hyper-K\"ahler triple 
$\boldsymbol{\eta}=(\eta_1,\eta_2,\eta_3)\in\Omega^2(M)\otimes\R^3$ whose hyper-K\"ahler metric is $g$.
\end{itemize}
See \cite[Definition 3.6]{Foscolo2019} for the 
precise statement of the asymptotic condition.

Moreover, we have a smooth map 
\begin{align*}
\nu&\colon M\setminus K\to S^2\times (R,\infty),\\
(\mbox{resp. }\nu&\colon M\setminus K\to S^2/\{ \pm 1\}\times (R,\infty))
\end{align*}
such that 
\begin{align}
\nu\circ \phi=\nu_{A_{k-1}}\quad
(\mbox{resp. } \nu\circ \phi=\nu_{D_m}).\label{diff ALF moment}
\end{align}
By the asymptotic condition $({\rm ii})$, 
the limit of the length of circle fibers of $\nu$ at infinity is 
normalized to be $2\pi$. 
When we refer to ALF gravitational instantons, we often 
refer to their additional structures, such as $(M,g,\boldsymbol{\eta},K,R,\phi,\nu)$.  
For $R'\ge R$, we put 
\begin{align*}
M|_{<R'}&:=\nu^{-1}(S^2\times (R,R'))\cup K,\\
(\mbox{resp. }M|_{<R'}&:=\nu^{-1}(S^2/\{ \pm 1\}\times (R,R'))\cup K).
\end{align*}
Then $M|_{<R'}$ is an open submanifold of $M$.

Put 
\begin{align*}
\hat{M}_h|_{>R}&:=\left\{ \left. 
x\in \hat{M}_h\right| \, \min_{i,j}\left\{ 
d_\T(\hat{\mu}_h(x),p_i),\, d_\T(\hat{\mu}_h(x),q_j)\right\}>R\right\},\\
M_h|_{>R}&:=\hat{M}_h|_{>R}/\tau.
\end{align*}
We can also define $M|_{\le R'},M_h|_{\ge R}$ in the same way. 

\begin{thm}[{\cite{Foscolo2019}}]
Let $(\bm{m},\bm{k})\in \Z^{n+8}$ satisfy the balancing condition \eqref{eq balance} and $(h,\theta)\in\F(S,\bm{m},\bm{k})$. 
There is a constant $\varepsilon_0>0$ and ALF gravitational instantons 
\begin{align*}
&(M^A_i,g^A_i,\boldsymbol{\eta}^A_i,K^A_i,R_0,\phi^A_i,\mu^A_i)\quad (\mbox{type }A_{k_i-1},\, i=1,\ldots,n),\\
&(M^D_j,g^D_j,\boldsymbol{\eta}^D_j,K^D_j,R_0,\phi^D_j,\mu^D_j)
\quad(\mbox{type }D_{m_j},\, j=1,\ldots,8),
\end{align*}
depending only on 
$n,(\bm{m},\bm{k}), (h,\theta)$ such that the following hold. 
For any $0<\varepsilon\le \varepsilon_0$, 
there is a smooth $4$-manifold $X$, definite triples of closed 
$2$-forms $\boldsymbol{\omega}_\varepsilon
=\left( 
\omega_{\varepsilon,1},\, \omega_{\varepsilon,2},\, \omega_{\varepsilon,3}\right)$, 
hyper-K\"ahler triples $\widetilde{\boldsymbol{\omega}}_\varepsilon
=\left( 
\widetilde{\omega}_{\varepsilon,1},\, \widetilde{\omega}_{\varepsilon,2},\, \widetilde{\omega}_{\varepsilon,3}\right)$
on $X$ 
and open covers $\{ U_\varepsilon, U^A_{i,\varepsilon}, U^D_{j,\varepsilon}|\, i=1,\ldots,n,\, j=1,\ldots,8\}$ of $X$ such that 
\begin{itemize}
\setlength{\parskip}{0cm}
\setlength{\itemsep}{0cm}
 \item[$({\rm i})$] 
$U_\varepsilon=M_{\varepsilon^{-1}+h}|_{>\varepsilon^{2/5}}$, 
$U^A_{i,\varepsilon}=M^A_i|_{<2\varepsilon^{-3/5}}$ and $U^D_{j,\varepsilon}=M^D_j|_{<2(1+\lambda_j\varepsilon)\varepsilon^{-3/5}}$ for some constant $\lambda_j$ determined by $h$, 
 \item[$({\rm ii})$] $\boldsymbol{\omega}_\varepsilon
 = \varepsilon\boldsymbol{\eta}_{\varepsilon^{-1}+h}$ on 
$M_{\varepsilon^{-1}+h}|_{\ge 2\varepsilon^{2/5}}$ and $\varepsilon^{-1}+h\ge \varepsilon^{-1}/2$ on $U_{\varepsilon}$, 
 \item[$({\rm iii})$] there are diffeomorphisms 
$U_\varepsilon\cap U^A_{i,\varepsilon}\cong H^{k_i}_{(\varepsilon^{2/5},2\varepsilon^{2/5})}$ which identifies $\mu_h-\bar{p}_i$ with $\varepsilon \mu^A_i$, 
$U_\varepsilon\cap U^D_{j,\varepsilon}\cong H^{2(m_j-2)}_{(\varepsilon^{2/5},2\varepsilon^{2/5})}/\{ \pm 1\}$ which identifies $\mu_h-\bar{q}_j$ with $\varepsilon(1+\lambda_j\varepsilon)^{-1} \mu^D_j$, 
and we have $U^A_{i,\varepsilon}\cap U^D_{j,\varepsilon}=\emptyset$ for all $i,j$, 
 \item[$({\rm iv})$] for all $i$, 
 we have $\boldsymbol{\omega}_\varepsilon
 = \varepsilon^2(\boldsymbol{\eta}^A_i+\varepsilon\boldsymbol{\eta}'_i+\varepsilon^2\boldsymbol{\eta}''_i)$ on 
$M^A_i|_{\le \varepsilon^{-3/5}}$, 
for some $\boldsymbol{\eta}'_i,\boldsymbol{\eta}''_i\in\Omega^2(M^A_i)\otimes\R^3$, which are independent of 
$\varepsilon$, 
 \item[$({\rm v})$] for all $j$, 
 we have $\boldsymbol{\omega}_\varepsilon
 = \varepsilon^2(1+\varepsilon\lambda_j)^{-1}\boldsymbol{\eta}^D_j$ 
 on $M^D_j|_{\le (1+\lambda_j\varepsilon)\varepsilon^{-3/5}}$, 
 \item[$({\rm vi})$] there is a positive constant $C>0$ 
such that 
$\varepsilon^2(1-C\varepsilon)
g^A_i\le g_{\boldsymbol{\omega}_\varepsilon}\le \varepsilon^2(1+C\varepsilon)
g^A_i$ and 
$\varepsilon^2(1-C\varepsilon)
g^D_j\le g_{\boldsymbol{\omega}_\varepsilon}\le \varepsilon^2(1+C\varepsilon)
g^D_j$ on $U_\varepsilon\cap U^A_{i,\varepsilon}$ 
and $U_\varepsilon\cap U^D_{j,\varepsilon}$, respectively, 
and 
$(1-C\varepsilon^{11/10})
g_{\widetilde{\boldsymbol{\omega}}_\varepsilon}\le g_{\boldsymbol{\omega}_\varepsilon}\le (1+C\varepsilon^{11/10})
g_{\widetilde{\boldsymbol{\omega}}_\varepsilon}$ on $X$.
\end{itemize}
\label{thm k3 to t3}
\end{thm}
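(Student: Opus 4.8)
A proof amounts to recalling Foscolo's gluing construction from \cite{Foscolo2019} and extracting the quantitative statements (i)--(vi) from it. The scheme is: (a) rescale the Gibbons--Hawking space over $\T^*$ so that it collapses onto $(\T/\{\pm 1\},g_{\boldsymbol{\theta}})$ with curvature concentrating near $S$; (b) match the rescaled geometry near each $p_i$, $q_j$ with the end of an ALF gravitational instanton of type $A_{k_i-1}$, resp.\ $D_{m_j}$, and glue these instantons in; (c) observe that the resulting closed definite triple $\bdomega_\varepsilon$ is an approximate hyper-K\"ahler triple, i.e.\ $Q_{\bdomega_\varepsilon}\to I_3$ together with its rescaled derivatives; (d) perturb $\bdomega_\varepsilon$ by an exact correction to a genuine hyper-K\"ahler triple $\widetilde{\bdomega}_\varepsilon$; (e) read off (i)--(vi). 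The balancing condition \eqref{eq balance} is what makes the glued $4$-manifold $X$ carry the Euler number $24$ and signature $-16$, hence be diffeomorphic to a $K3$ surface, so that with $\widetilde{\bdomega}_\varepsilon$ it is a $K3$ hyper-K\"ahler manifold.

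For (a)--(b): fix $(h,\theta)\in\F(S,\bm{m},\bm{k})$ as in Theorem~\ref{thm harm pole} and set $\bdomega_\varepsilon:=\varepsilon\bdeta_{\varepsilon^{-1}+h}$ on $M_{\varepsilon^{-1}+h}|_{>\varepsilon^{2/5}}$, where $\varepsilon^{-1}+h\ge\varepsilon^{-1}/2>0$. Its metric $g_{\bdomega_\varepsilon}=\varepsilon(\varepsilon^{-1}+h)^{-1}\theta^2+\varepsilon(\varepsilon^{-1}+h)\sum_i\theta_i^2$ collapses the $U(1)$-fibre at rate $\varepsilon$ and on the base is $C^\infty$-close to $g_{\boldsymbol{\theta}}$. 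Near $p_i$ one has $h\sim_{p_i}k_i/(2\rho_{p_i})$, so in the rescaled coordinate $x\mapsto x/\varepsilon$ the function $\varepsilon^{-1}+h$ becomes $1+k_i/(2|x|)+O(\varepsilon)$, which by the Gibbons--Hawking ansatz and the identifications \eqref{diff glue}--\eqref{diff glue moment} is exactly the asymptotic model $g_{k_i}$ of an $A_{k_i-1}$-ALF instanton; near $q_j$, where $h\sim_{q_j}(2m_j-4)/(2\rho_{q_j})$, one recovers on $H^{2(m_j-2)}/\{\pm 1\}$ the $D_{m_j}$-ALF model $g_{2m_j-2}$, with the $\{\pm 1\}$-quotient produced by $\tau$. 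One then takes ALF gravitational instantons $M^A_i$ (type $A_{k_i-1}$), $M^D_j$ (type $D_{m_j}$) with these asymptotics --- which exist by the constructions cited in \cite{Foscolo2019} --- and glues $\varepsilon^2$-rescalings of their hyper-K\"ahler triples onto $\bdomega_\varepsilon$ across the annuli of (iii), at geometric scale $\sim\varepsilon^{2/5}$ (equivalently ALF scale $\sim\varepsilon^{-3/5}$), by interpolating the Gibbons--Hawking data $(\varepsilon^{-1}+h,\theta)$ with the ALF ones using cutoff functions. The constants $\lambda_j$ come from matching the $D_{m_j}$-ALF asymptotic parameters to the expansion of $h$ at $q_j$; the scales $\varepsilon^{2/5},\varepsilon^{3/5}$ are chosen to make the mismatch of the two models on the overlap as small as possible while keeping the overlap wide enough for interpolation, and the error forms $\varepsilon\bdeta'_i,\varepsilon^2\bdeta''_i$ in (iv) record this interpolation. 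Closedness of $\bdomega_\varepsilon$ is automatic since one interpolates primitives, not the forms.

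For (c)--(d): on $M_{\varepsilon^{-1}+h}|_{\ge 2\varepsilon^{2/5}}$ and on the inner parts of the ALF pieces, $\bdomega_\varepsilon$ is a rescaled honest hyper-K\"ahler triple up to the explicit $O(\varepsilon)$ corrections of (iv)--(v); on the overlaps $|Q_{\bdomega_\varepsilon}-I_3|$ is controlled by the $C^\infty$-distance of the two rescaled models to their common Gibbons--Hawking model, together with the $O(r^{-3})$ asymptotic decay of the ALF ends, so $|Q_{\bdomega_\varepsilon}-I_3|=O(\varepsilon^\alpha)$ in the $C^k$-norms of $g_{\bdomega_\varepsilon}$ for some $\alpha>0$. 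One then solves for $\widetilde{\bdomega}_\varepsilon=\bdomega_\varepsilon+d\mathbf{a}_\varepsilon$, $\mathbf{a}_\varepsilon\in\Omega^1(X)\otimes\R^3$ small, so that $\widetilde{\bdomega}_\varepsilon$ is, up to rescaling, an $SU(2)$-structure --- being closed, it is then automatically a hyper-K\"ahler triple. This is a nonlinear first-order equation $\mathcal{N}_\varepsilon(\mathbf{a}_\varepsilon)=0$ with quadratic nonlinearity and inhomogeneous term $O(\varepsilon^\alpha)$, whose linearisation at $0$ is a first-order elliptic operator assembled from $d$ and $d^*$ of $g_{\bdomega_\varepsilon}$ after a gauge-fixing; it is solved by a contraction-mapping argument in weighted H\"older spaces adapted to the collapsing geometry (modulo a finite-dimensional obstruction absorbed by adjusting the gluing parameters), yielding $\mathbf{a}_\varepsilon$ with $|\mathbf{a}_\varepsilon|_{g_{\bdomega_\varepsilon}}$ and $|d\mathbf{a}_\varepsilon|_{g_{\bdomega_\varepsilon}}$ of order $\varepsilon^{11/10}$. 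I expect this perturbation step --- and specifically the \emph{uniform in $\varepsilon$} invertibility of the linearised operator as the $T^2$-fibres collapse, which must be pieced together from the mapping properties on the adiabatically collapsing region and on the fixed ALF bubbles and then matched across the overlaps --- to be by far the main obstacle; everything else is gluing bookkeeping together with the elementary estimates of Section~\ref{sec HK triple}.

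Finally, (i) is the definition of the open cover by truncation, and (ii)--(v) merely record, on the stated subregions and before cutting off, where $\bdomega_\varepsilon$ equals the rescaled Gibbons--Hawking or ALF triples. Claim (vi) follows from Lemma~\ref{lem norm met}: on $U_\varepsilon\cap U^A_{i,\varepsilon}$ both $g_{\bdomega_\varepsilon}$ and $\varepsilon^2 g^A_i$ agree with $\varepsilon^2$ times the Gibbons--Hawking metric of $1+k_i/(2|x|)$ up to a factor $1+O(\varepsilon)$ --- for the former because that function and the rescaled $\varepsilon^{-1}+h$ share the singular part $k_i/(2|x|)$, for the latter by the asymptotic decay of the ALF end --- so the lemma gives $\varepsilon^2(1-C\varepsilon)g^A_i\le g_{\bdomega_\varepsilon}\le\varepsilon^2(1+C\varepsilon)g^A_i$ there, and likewise with $g^D_j$ on $U_\varepsilon\cap U^D_{j,\varepsilon}$; and since $\widetilde{\bdomega}_\varepsilon$ is pointwise an $SU(2)$-structure with $|\bdomega_\varepsilon-\widetilde{\bdomega}_\varepsilon|=O(\varepsilon^{11/10})$ on $X$, the lemma again gives $(1-C\varepsilon^{11/10})g_{\widetilde{\bdomega}_\varepsilon}\le g_{\bdomega_\varepsilon}\le(1+C\varepsilon^{11/10})g_{\widetilde{\bdomega}_\varepsilon}$ on all of $X$.
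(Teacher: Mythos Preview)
Your outline follows the same route as the paper's proof --- both are extractions from Foscolo's construction --- but several details are off in ways that matter for the rest of the paper.

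First, the perturbation is not purely exact: the paper (following \cite[Theorem 6.17]{Foscolo2019}) writes $\widetilde{\bdomega}_\varepsilon=\bdomega_\varepsilon+d\bm{a}_\varepsilon+\bm{\zeta}_\varepsilon$ with $\bm{\zeta}_\varepsilon=(\sum_\alpha\zeta_{\varepsilon,k\alpha}\omega_{\varepsilon,\alpha})_k$ a \emph{constant} linear combination of the $\omega_{\varepsilon,\alpha}$, and $|d\bm{a}_\varepsilon|=O(\varepsilon^{3(\delta+2)/5})$, $|\zeta_{\varepsilon,k\alpha}|=O(\varepsilon^{(17-4\delta)/10})$; taking $\delta=-1/6$ gives the $O(\varepsilon^{11/10})$. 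The harmonic piece $\bm{\zeta}_\varepsilon$ is not ``absorbed by adjusting the gluing parameters'' and is used explicitly later (Proposition~\ref{prop error IE}), so you should not drop it.

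Second, the forms $\bdeta'_i,\bdeta''_i$ in (iv) do not ``record the interpolation''. They arise because on $M^A_i$ one modifies the Gibbons--Hawking potential by adding $\varepsilon\lambda_i+\varepsilon^2 l_i$, the first-order Taylor jet of $h-k_i/2\rho_{p_i}$ at $p_i$; the resulting Gibbons--Hawking triple is $\bdeta^A_i+\varepsilon\bdeta'_i+\varepsilon^2\bdeta''_i$ on the nose. The cutoff interpolation lives instead in the $1$-forms $\bm{b}_{i,\varepsilon}$, $\bm{b}_{j,\varepsilon}$ on the overlaps, with $|\nabla^l\bm{b}_{i,\varepsilon}|\le C\max\{\varepsilon^3\rho_{p_i}^{-2-l},\varepsilon\rho_{p_i}^{3-l}\}$ from \cite[Lemma 5.4, (5.5b)]{Foscolo2019}; this is what produces $|d(\chi_{p_i}\bm{b}_{i,\varepsilon})|\le C\varepsilon^{9/5}$ and hence $|\bdomega_\varepsilon-\varepsilon^2\bdeta^A_i|=O(\varepsilon)$, after which Lemma~\ref{lem norm met} gives the first two inequalities of (vi). Your alternative argument for (vi) via a common Gibbons--Hawking model is morally the same but hides the $O(\varepsilon^{9/5})$ bound on $|Q_{\bdomega_\varepsilon}-I_3|$ (from \cite[(5.8)]{Foscolo2019}) that Lemma~\ref{lem norm met} also needs.

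Minor slips: the collapsing fibres are circles, not $T^2$; and $\lambda_j$ is specifically the constant term of $h-(2m_j-4)/2\rho_{q_j}$ at $q_j$.
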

\begin{proof}
We can prove directly from results in \cite{Foscolo2019}. 
Here, we give the outline for the reader's convenience. 
First of all, by \cite[Lemma 4.9]{Foscolo2019}, 
we have $\varepsilon^{-1}+h\ge \varepsilon^{-1}/2$ 
on the complement of $8\varepsilon$-balls of $\{ q_1,\ldots,q_8\}$ 
for sufficiently small $\varepsilon$. 
Therefore, if we take 
sufficiently small $\varepsilon_0>0$, 
we can see that $\varepsilon^{-1}+h\ge \varepsilon^{-1}/2$ on 
$\T\setminus\bigcup_{j=1}^8B_\T(q_j,\varepsilon^{2/5})$ for all 
$0<\varepsilon\le \varepsilon_0$. 
We can glue 
$M_{\varepsilon^{-1}+h}|_{>\varepsilon^{2/5}}$ and $M^A_i|_{<2\varepsilon^{-3/5}}$ as follows for all $0<\varepsilon\le \varepsilon_0$. 
By taking $\varepsilon_0$ sufficiently small, we may suppose $R_0\le \varepsilon^{-3/5}$. 
Accordingly, we can take open subsets 
\begin{align*}
\mu_h^{-1}(A_\T(\bar{p}_i,\varepsilon^{2/5},2\varepsilon^{2/5}))&\subset M_{\varepsilon^{-1}+h}|_{>\varepsilon^{2/5}},\\
(\mu^A_i)^{-1}(S^2\times (\varepsilon^{-3/5},2\varepsilon^{-3/5}))
&\subset M^A_i|_{<2\varepsilon^{-3/5}}.
\end{align*}
Now, we have an isomorphism 
\begin{align*}
A_\T(\bar{p}_i,\varepsilon^{2/5},2\varepsilon^{2/5}))
\stackrel{\cong}{\to}S^2\times (\varepsilon^{-3/5},2\varepsilon^{-3/5}),\quad 
v\mapsto \varepsilon^{-1}(v-p_i),
\end{align*}
which lifts to a diffeomorphism 
\begin{align*}
\mu_h^{-1}(A_\T(\bar{p}_i,\varepsilon^{2/5},2\varepsilon^{2/5}))
\cong
(\mu^A_i)^{-1}(S^2\times (\varepsilon^{-3/5},2\varepsilon^{-3/5}))
\end{align*}
by considering the composition of $\phi^A_i$ and \eqref{diff glue}. 
Then, under this identification, \eqref{diff glue moment} and \eqref{diff ALF moment} gives 
the identification $\mu_h-\bar{p}_i\cong\varepsilon \mu^A_i$. 
Similarly, we have the identification 
\begin{align*}
\mu_h^{-1}(A_\T(\bar{q}_j,\varepsilon^{2/5},2\varepsilon^{2/5}))
&\cong
(\mu^D_j)^{-1}(S^2\times ((1+\varepsilon\lambda_j)\varepsilon^{-3/5},2(1+\varepsilon\lambda_j)\varepsilon^{-3/5})),\\
\mu_h-\bar{q}_j&\cong\varepsilon (1+\varepsilon\lambda_j)^{-1}\mu^D_j. 
\end{align*}
Here, $\lambda_j\in\R$ is the constant term of the Taylor expansion of $h-(2m_j-4)/2\rho_{q_j}$ at $q_j$. 
We will later mention why the term $(1+\varepsilon\lambda_j)$ is necessary. 
By taking $\varepsilon$ smaller than $\delta_0^{5/2}$, 
we obtain a smooth compact $4$-manifold $X$ 
with an open cover $\{ U_\varepsilon, U^A_{i,\varepsilon}, U^D_{j,\varepsilon}|\, i=1,\ldots,n,\, j=1,\ldots,8\}$ 
such that $({\rm i,iii})$ hold.

The construction of $\boldsymbol{\omega}_\varepsilon$ 
is given by \cite[Section 5.2]{Foscolo2019}, then we have 
$({\rm ii,v})$. 
On $M^A_i$, the hyper-K\"ahler triple $\boldsymbol{\eta}^A_i$ is given by the Gibbons-Hawking ansatz, which is obtained from a positive valued harmonic function on $\R^3$ with $k_i$ singularities. 
To obtain $\boldsymbol{\omega}_\varepsilon$ on 
$M^A_i|_{\le \varepsilon^{-3/5}}$, we add this harmonic function 
and $\varepsilon \lambda_i+\varepsilon^2l_i$, where $\lambda_i$ is a constant and $l_i$ is a linear function 
such that $\lambda_i+l_i$ is the 1st order approximation of 
the Taylor expansion of $h-k/2\rho_{p_i}$ at $p_i$. 
Then the new function is positive on $M^A_i|_{<2\varepsilon^{-3/5}}$ if $\varepsilon_0$ is sufficiently small, 
we obtain the new 
hyper-K\"ahler triple by Gibbons-Hawking ansatz, 
which we may write $\boldsymbol{\eta}^A_i+\varepsilon\boldsymbol{\eta}'_i+\varepsilon^2\boldsymbol{\eta}''_i$ for some triples of 
$2$-forms $\boldsymbol{\eta}'_i,\boldsymbol{\eta}''_i$ independent of $\varepsilon$. 
This implies $({\rm iv})$. 
$\boldsymbol{\omega}_\varepsilon$ on $M^D_j|_{\le (1+\lambda_j\varepsilon)\varepsilon^{-3/5}}$ is defined by 
$({\rm v})$.

The description of $\boldsymbol{\omega}_\varepsilon$ 
on the intersections 
$U_\varepsilon\cap U^A_{i,\varepsilon}$ 
and $U_\varepsilon\cap U^D_{j,\varepsilon}$ 
are given by \cite[(5.7)]{Foscolo2019}. 
There are triples of $1$-forms $\bm{b}_{i,\varepsilon},\bm{b}_{j,\varepsilon}$ on 
$U_\varepsilon\cap U^A_{i,\varepsilon}$ 
and $U_\varepsilon\cap U^D_{j,\varepsilon}$, respectively, 
we may write 
\[ \varepsilon\boldsymbol{\eta}_{\varepsilon^{-1}+h}=
\left \{
\begin{array}{cc}
\varepsilon^2(\boldsymbol{\eta}^A_i+\varepsilon\boldsymbol{\eta}'_i+\varepsilon^2\boldsymbol{\eta}''_i)+d\bm{b}_{i,\varepsilon} & \mbox{ on }U_\varepsilon\cap U^A_{i,\varepsilon}, \\
\varepsilon^2(1+\varepsilon\lambda_j)^{-1}\boldsymbol{\eta}^D_j+d\bm{b}_{j,\varepsilon} & \mbox{ on }U_\varepsilon\cap U^D_{j,\varepsilon},
\end{array}
\right.
\]
and we have 
\[ \boldsymbol{\omega}_\varepsilon=
\left \{
\begin{array}{cc}
\varepsilon^2(\boldsymbol{\eta}^A_i+\varepsilon\boldsymbol{\eta}'_i+\varepsilon^2\boldsymbol{\eta}''_i)+d(\chi_{p_i}\bm{b}_{i,\varepsilon}) & \mbox{ on }U_\varepsilon\cap U^A_{i,\varepsilon} \\
\varepsilon^2(1+\varepsilon\lambda_j)^{-1}\boldsymbol{\eta}^D_j+d(\chi_{q_j}\bm{b}_{j,\varepsilon}) & \mbox{ on }U_\varepsilon\cap U^D_{j,\varepsilon}
\end{array}
\right.
\]
Here, $\chi_{p_i}$ is a smooth cut-off function on 
$U_\varepsilon\cap U^A_{i,\varepsilon}=W^{k_i}\times (\varepsilon^{2/5},2\varepsilon^{2/5})$ 
such that $\chi_{p_i}(w,\varepsilon^{2/5})=0$, 
$\chi_{p_i}(w,2\varepsilon^{2/5})=1$, 
$0\le \chi_{p_i}\le 1$ and 
$|\nabla \chi_{p_i}|\le C\rho_{p_i}^{-1}$ 
for a positive constant $C$. 
From now on, we will replace $C$ with the larger one 
if it is necessary. 
Similarly, $\chi_{q_j}$ is a smooth cut-off function on 
$U_\varepsilon\cap U^D_{j,\varepsilon}=W^{2(m_j-2)}/\{ \pm 1\}\times (\varepsilon^{2/5},2\varepsilon^{2/5})$ 
such that $\chi_{q_j}(w,\varepsilon^{2/5})=0$, 
$\chi_{q_j}(w,2\varepsilon^{2/5})=1$, 
$0\le \chi_{q_j}\le 1$ and 
$|\nabla \chi_{q_j}|\le C\rho_{q_j}^{-1}$. 
We also have 
\begin{align*}
|\nabla^l\bm{b}_{i,\varepsilon}|&\le C\max
\left\{ \varepsilon^3\rho_{p_i}^{-2-l},\, \varepsilon\rho_{p_i}^{3-l}\right\},\\
|\nabla^l\bm{b}_{j,\varepsilon}|&\le C\max
\left\{ \varepsilon^3\rho_{q_j}^{-2-l},\, \varepsilon\rho_{q_j}^{3-l}\right\},
\end{align*}
for $l=0,1$ by \cite[Lemma 5.4 and (5.5b)]{Foscolo2019}. 
The above estimates are necessary to show the existence of the hyper-K\"ahler triple $\tilde{\boldsymbol{\omega}}_\varepsilon$. 
This is why $\lambda_i,l_i,\lambda_j$ are considered. 
If we ignore these terms, then the above estimates get worse. 
Since $\varepsilon^{2/5}\le \rho_{p_i},\rho_{q_j}\le 2\varepsilon^{2/5}$, 
we have 
\begin{align*}
|\nabla^l\bm{b}_{i,\varepsilon}|\le C\varepsilon^{11/5-2l/5},\quad
|\nabla^l\bm{b}_{j,\varepsilon}|\le C \varepsilon^{11/5-2l/5}.
\end{align*}
Therefore, we have 
\begin{align*}
|d(\chi_{p_i}\bm{b}_{i,\varepsilon})|
\le C\varepsilon^{9/5},\quad
|d(\chi_{q_j}\bm{b}_{j,\varepsilon})|
\le C\varepsilon^{9/5}.
\end{align*}
In the above inequalities, the norms of $1$-forms are 
given by $\varepsilon^2g^A_i$ and $\varepsilon^2g^D_j$, 
respectively. 
Therefore, we have 
\begin{align*}
\left| \boldsymbol{\omega}_\varepsilon - \varepsilon^2\boldsymbol{\eta}^A_i\right|
&\le \left| \varepsilon^3\boldsymbol{\eta}'_i+\varepsilon^4\boldsymbol{\eta}''_i\right|+C\varepsilon^{9/5}
= O(\varepsilon+\varepsilon^2+\varepsilon^{9/5})=O(\varepsilon),\\
\left| \boldsymbol{\omega}_\varepsilon - \varepsilon^2\boldsymbol{\eta}^D_j\right|
&\le \left| \boldsymbol{\omega}_\varepsilon - \varepsilon^2(1+\varepsilon\lambda_j)^{-1}\boldsymbol{\eta}^D_j\right|+\left|\varepsilon^2\{ (1+\varepsilon\lambda_j)^{-1} -1\}\boldsymbol{\eta}^D_j\right|\\
&=O(\varepsilon^{9/5})+O(\varepsilon)=O(\varepsilon).
\end{align*}
Then by Lemma \ref{lem norm met}, we have the first and the second 
inequality of $({\rm vi})$. 
Finally, we estimate the difference between $\widetilde{\boldsymbol{\omega}}_\varepsilon$ and 
$\boldsymbol{\omega}_\varepsilon$. 
By the argument in the proof of \cite[Theorem 6.17]{Foscolo2019}, we may write 
$\widetilde{\boldsymbol{\omega}}_\varepsilon
=\boldsymbol{\omega}_\varepsilon+d\bm{a}_\varepsilon+\boldsymbol{\zeta}_\varepsilon$ 
for some $\bm{a}_\varepsilon\in\Omega^1(X)\otimes\R^3$ and 
\begin{align*}
\boldsymbol{\zeta}_\varepsilon
=\left(\sum_{\alpha=1}^3\zeta_{\varepsilon,1\alpha}\omega_{\varepsilon,\alpha},\, 
\sum_{\alpha=1}^3\zeta_{\varepsilon,2\alpha}\omega_{\varepsilon,\alpha},\, 
\sum_{\alpha=1}^3\zeta_{\varepsilon,3\alpha}\omega_{\varepsilon,\alpha}
\right)\in\Omega^2(X)\otimes\R^3,
\end{align*}
where $\zeta_{\varepsilon,\alpha\beta}\in\R$. 
Moreover, we have 
\begin{align*}
|d\bm{a}_\varepsilon|_{g_{\bdomega_\varepsilon}}&=O(\varepsilon^{3(\delta+2)/5}),\\
|\zeta_{\varepsilon\alpha\beta}|_{g_{\bdomega_\varepsilon}}&=O(\varepsilon^{(17-4\delta)/10}).
\end{align*}
Since we can take any negative number $\delta$ in $(-1/2,0)$, 
we put $\delta=-1/6$. 
It implies $|\widetilde{\boldsymbol{\omega}}_\varepsilon
-\boldsymbol{\omega}_\varepsilon|_{g_{\bdomega_\varepsilon}}=O(\varepsilon^{11/10})$. 
We also have $|Q_{\bdomega_\varepsilon}-I_3|=O(\varepsilon^{9/5})$ by the inequality $(5.8)$ in \cite{Foscolo2019}. 
By Lemma \ref{lem norm met}, we obtain the third inequality of $({\rm vi})$. 
\end{proof}

\section{Smooth maps}\label{sec map}
\subsection{Smooth maps from smooth manifold to $\T/\{ \pm 1\}$}
In this paper, for a smooth manifold $M$, a smooth map 
$f\colon M\to \T/\{ \pm 1\}$ is a morphism between orbifolds. 
Here, any smooth manifolds have naturally the structure of orbifolds. 
Moreover, any quotient space $M/\Gamma$ also has an orbifold structure, 
where $\Gamma$ is a finite group acting smoothly on $M$. 
Then $\T/\{ \pm 1\}$ is an orbifold of dimension $3$ and an open subset 
\begin{align*}
\overline{\T}_{{\rm reg}}&:=(\T/\{\pm 1\})\setminus\{ \bar{q}_1,\ldots,\bar{q}_8\}
\end{align*}
is a smooth manifold. 
Let $g_{\boldsymbol{\theta}}$ and $\delta_0>0$ be as in Section \ref{sec Foscolo}. 
Then a continuous map $f\colon M\to \T/\{ \pm 1\}$ is said to be smooth if 
there are index sets $A_j$ for $j=1,\ldots,8$ and an open cover $\{ U,U_{j,\alpha}|\, j=1,\ldots,8,\alpha\in A_j\}$ of $M$ such that 
the following hold. 
\begin{itemize}
\setlength{\parskip}{0cm}
\setlength{\itemsep}{0cm}
 \item[$({\rm i})$] $U=f^{-1}(\overline{\T}_{{\rm reg}})$, $U_{j,\alpha}\subset f^{-1}(B(\bar{q}_j,\delta_0))$.
 \item[$({\rm ii})$] $f|_U\colon U\to (\T/\{\pm 1\})\setminus\{ \bar{q}_1,\ldots,\bar{q}_8\}$ is a $C^\infty$-map.
 \item[$({\rm iii})$] There is a $C^\infty$- $\tilde{f}_{j,\alpha}\colon U_{j,\alpha}\to B(q_j,\delta_0)$ such that $\pi\circ \tilde{f}_{j,\alpha}=f|_{U_{j,\alpha}}$. 
 Here, $\pi\colon\T\to \T/\{\pm 1\}$ is the quotient map. 
\end{itemize}

In Section \ref{subsec GH}, 
we took parallel $1$-forms $\theta_1,\theta_2,\theta_3$ 
on $\T$. 
For $k\ge 0$, denote by $\Omega^k(\T)^{\{ \pm 1\}}$ 
the set of $k$-forms on $\T$ which are invariant by the pullback by the $\{ \pm 1\}$-action. 
Then for every $\Theta\in\Omega^k(\T)^{\{ \pm 1\}}$, 
$\Theta|_{\T\setminus \{ q_1,\ldots,q_8\}}$ descends to a differential $k$-form on the quotient manifold denoted by 
$\overline{\Theta}\in \Omega^k(\overline{\T}_{{\rm reg}})$. 

Let $f\colon M\to \T/\{ \pm 1\}$ be a smooth map 
and $\{ U,U_{j,\alpha}|\, j=1,\ldots,8,\alpha\in A_j\}$, $\tilde{f}_{j,\alpha}$ be associating datas satisfying $({\rm i}-{\rm iii})$. 
\begin{prop}
Let $\Theta\in\Omega^k(\T)^{\{ \pm 1\}}$. 
A differential $k$-from $f^*\Theta\in \Omega^k(M)$ given by 
\begin{align*}
f^*\Theta|_U&:=f^*\overline{\Theta},\\
f^*\Theta|_{U_{j,\alpha}}&:=\tilde{f}_{j,\alpha}^*\Theta
\end{align*}
is well-defined and determined uniquely by $f$ and $\Theta$. 
\label{prop pullback form}
\end{prop}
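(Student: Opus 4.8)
The plan is to show that the two local formulas agree on overlaps, so that they patch to a global form, and that the result is independent of the chosen data. The only nontrivial overlaps are of three types: $U\cap U_{j,\alpha}$, $U_{j,\alpha}\cap U_{j,\beta}$ (with $\alpha,\beta\in A_j$), and $U_{j,\alpha}\cap U_{j',\beta}$ for $j\neq j'$. First I would dispose of the last type: since $U_{j,\alpha}\subset f^{-1}(B(\bar q_j,\delta_0))$ and these balls are mutually disjoint by the definition of $\delta_0$ in Section \ref{sec Foscolo}, the intersection $U_{j,\alpha}\cap U_{j',\beta}$ is empty when $j\neq j'$, so there is nothing to check.

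For an overlap $U\cap U_{j,\alpha}$, note that this set maps under $f$ into $B(\bar q_j,\delta_0)\cap \overline{\T}_{\mathrm{reg}}=B(\bar q_j,\delta_0)\setminus\{\bar q_j\}$, which equals $\pi\big(B(q_j,\delta_0)\setminus\{q_j\}\big)$, and on the complement of the branch points $\pi$ restricts to a genuine $2$-to-$1$ covering map. On $U\cap U_{j,\alpha}$ we have $\pi\circ\tilde f_{j,\alpha}=f$, hence $\tilde f_{j,\alpha}$ takes values in $B(q_j,\delta_0)\setminus\{q_j\}$ there, and $f|_U=\pi\circ\tilde f_{j,\alpha}$ as maps into $\overline{\T}_{\mathrm{reg}}$. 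Therefore $f^*\overline{\Theta}=\tilde f_{j,\alpha}^*\pi^*\overline{\Theta}=\tilde f_{j,\alpha}^*\Theta$, where the last equality uses that $\pi^*\overline{\Theta}=\Theta$ on $\T\setminus\{q_1,\dots,q_8\}$ by the very definition of $\overline{\Theta}$. This gives agreement on $U\cap U_{j,\alpha}$.

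For $U_{j,\alpha}\cap U_{j,\beta}$, set $V:=U_{j,\alpha}\cap U_{j,\beta}$ and consider the two smooth lifts $\tilde f_{j,\alpha},\tilde f_{j,\beta}\colon V\to B(q_j,\delta_0)$ of $f|_V$. Over the open dense subset $V':=V\cap f^{-1}(B(\bar q_j,\delta_0)\setminus\{\bar q_j\})$ both lifts land in $B(q_j,\delta_0)\setminus\{q_j\}$ and satisfy $\pi\circ\tilde f_{j,\alpha}=\pi\circ\tilde f_{j,\beta}$; since $\pi$ is a covering over $B(q_j,\delta_0)\setminus\{q_j\}$ with deck group $\{\pm1\}$, on each connected component of $V'$ either $\tilde f_{j,\alpha}=\tilde f_{j,\beta}$ or $\tilde f_{j,\alpha}=-\tilde f_{j,\beta}$. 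In either case $\tilde f_{j,\alpha}^*\Theta=\tilde f_{j,\beta}^*\Theta$ on $V'$, because $\Theta$ is $\{\pm1\}$-invariant so $(-1)^*\Theta=\Theta$. Thus the two smooth forms $\tilde f_{j,\alpha}^*\Theta$ and $\tilde f_{j,\beta}^*\Theta$ on $V$ coincide on the dense subset $V'$, hence coincide on all of $V$ by continuity. This is the step I expect to be the main (if mild) obstacle, since it is where the orbifold-smoothness hypothesis genuinely enters and where one must be careful that agreement on a dense set extends across the possibly nonempty locus $f^{-1}(\bar q_j)$.

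Having checked agreement on all overlaps, the local forms glue to a global $f^*\Theta\in\Omega^k(M)$. Uniqueness follows because any two choices of adapted atlas $\{U,U_{j,\alpha},\tilde f_{j,\alpha}\}$ admit a common refinement, and the computation above shows the glued form does not depend on which lift or which chart is used on overlaps; alternatively, $f^*\Theta$ is characterized intrinsically by $f^*\Theta=f^*\overline{\Theta}$ on the open dense set $f^{-1}(\overline{\T}_{\mathrm{reg}})$, which already determines it on all of $M$ by continuity of smooth forms, and the definition on the $U_{j,\alpha}$ is forced to be this continuous extension.
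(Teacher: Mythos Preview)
Your approach mirrors the paper's: check agreement on $U\cap U_{j,\alpha}$ via $\pi^*\overline\Theta=\Theta$, then on $U_{j,\alpha}\cap U_{j,\beta}$ argue that the two lifts give the same pullback and extend by continuity. The deck-transformation step you use on $V'$ is a clean alternative to the paper's route (the paper instead reduces agreement on $V\cap U$ to the already-proved $U\cap U_{j,\alpha}$ case), and your remark that $U_{j,\alpha}\cap U_{j',\beta}=\emptyset$ for $j\ne j'$ is a clarification the paper leaves implicit.

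There is, however, a genuine gap. You assert that $V'=V\cap f^{-1}\bigl(B(\bar q_j,\delta_0)\setminus\{\bar q_j\}\bigr)$ is \emph{dense} in $V$, and likewise in the uniqueness paragraph that $f^{-1}(\overline\T_{\mathrm{reg}})$ is dense in $M$. Neither need hold: if $f$ is constant equal to $\bar q_j$ on some open subset of $V$, that open set lies in $V\setminus V'$, so $V'$ is not dense and continuity alone does not force agreement there. The paper deals with precisely this situation: on the interior $U_3$ of $V\setminus U$, both lifts $\tilde f_{j,\alpha},\tilde f_{j,\beta}$ are identically $q_j$, hence both pullbacks agree trivially on $U_3$. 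One then observes that $(V\cap U)\cup U_3$ \emph{is} dense in $V$ (any point of $V\setminus U$ not in its interior is a boundary point of $U$ in $V$, hence a limit of points of $U$) and concludes by continuity. Your write-up correctly flags this spot as the delicate one but stops short of supplying this missing piece.
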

\begin{proof}
We need to show that two definitions of $f^*\Theta$ coincide on 
intersections $U\cap U_{j,\alpha}$, and it is independent of 
the choice of $U_{j,\alpha}$ and $\tilde{f}_{j,\alpha}$. 
Fix $j$ and let $U_1,U_2\subset f^{-1}(B(\bar{q}_j,\delta_0))$ 
be open sets, $\tilde{f}_i\colon U_i\to B(q_j,\delta_0)$ 
be smooth maps such that $\pi\circ \tilde{f}_i=f|_{U_i}$ 
for $i=1,2$. 
It suffices to show 
\begin{align*}
f^*\overline{\Theta}|_{U\cap U_1}
&=\tilde{f}_1^*\Theta|_{U\cap U_1},\\
\tilde{f}_1^*\Theta|_{U_1\cap U_2}
&=\tilde{f}_2^*\Theta|_{U_1\cap U_2}.
\end{align*}
By the definition of $\overline{\Theta}$, we can see 
$\pi^*\overline{\Theta}=\Theta$, therefore, we have 
\begin{align*}
f^*\overline{\Theta}|_{U\cap U_1}
=(\pi\circ\tilde{f}_1)^*\overline{\Theta}|_{U\cap U_1}
=\tilde{f}_1^*\Theta|_{U\cap U_1}.
\end{align*}
Consequently, we have 
\begin{align*}
\tilde{f}_1^*\Theta|_{U_1\cap U_2\cap U}
&=\tilde{f}_2^*\Theta|_{U_1\cap U_2\cap U}.
\end{align*}
Next, we take $p\in U_1\cap U_2\setminus U$. 
If $p$ is an interior point of $U_1\cap U_2\setminus U$, 
then we have $\tilde{f}_1\equiv\tilde{f}_2\equiv q_j$ on the neighborhood 
of $p$, hence $\tilde{f}_1^*\Theta=\tilde{f}_2^*\Theta$ at $p$. 
If we denote by $U_3$ the interior of $U_1\cap U_2\setminus U$, 
then we can see $\tilde{f}_1^*\Theta=\tilde{f}_2^*\Theta$ 
on $U_1\cap U_2\cap (U\cup U_3)$. 
Since $U_1\cap U_2\cap (U\cup U_3)$ is dense in $U_1\cap U_2$, we obtain $\tilde{f}_1^*\Theta|_{U_1\cap U_2}
=\tilde{f}_2^*\Theta|_{U_1\cap U_2}$ by 
the continuity of both hand sides. 
\end{proof}
\begin{prop}
Let $I\subset\R$ be an open interval containing the origin 
and $F\colon I\times M\to \T^3/\{ \pm 1\}$ a smooth map. 
For any closed form $\Theta\in\Omega^k(\T)^{\pm 1}$, 
$\frac{d}{dt}|_{t=0}F(t,\cdot)^*\Theta$ is an exact form on $M$. 
\label{prop var form}
\end{prop}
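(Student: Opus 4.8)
The plan is to reduce the claim to the case of a \emph{smooth family} of honest maps $M\to\T$ by working locally, and then invoke the standard fact that the Lie derivative of a closed form along a vector field is exact. First I would fix the open cover $\{U,U_{j,\alpha}\}$ and the local lifts $\tilde f_{j,\alpha}$ associated to $F(0,\cdot)$; by shrinking $M$ to a neighborhood of a given point and using continuity of $F$ in $t$, I can assume that for small $|t|$ the maps $F(t,\cdot)$ are all subordinate to the \emph{same} combinatorial data, so that on $U$ we have $F(t,\cdot)^*\Theta=F(t,\cdot)^*\overline\Theta$ with $F(t,\cdot)\colon U\times I\to\overline{\T}_{\rm reg}$ genuinely smooth, while on each $U_{j,\alpha}$ we get a smooth lift $\tilde F_{j,\alpha}\colon I\times U_{j,\alpha}\to B(q_j,\delta_0)\subset\T$ with $\pi\circ\tilde F_{j,\alpha}(t,\cdot)=F(t,\cdot)$. (The existence of such a $t$-dependent lift near $t=0$ follows from the lifting criterion applied to the simply connected factor $I$, or more concretely by lifting $\tilde F_{j,\alpha}(0,\cdot)$ and extending in $t$ using that $\pi$ is a covering away from the singular points and a local homeomorphism adapted to the orbifold chart near them.)

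Next, on the ordinary manifold $\T$ (or $\overline{\T}_{\rm reg}$), for a smooth one-parameter family of maps $t\mapsto f_t$ and a closed $k$-form $\Theta$ one has the classical identity
\begin{align*}
\frac{d}{dt}\Big|_{t=0}f_t^*\Theta = f_0^*(\mathcal L_{V}\Theta) = f_0^*(d\,\iota_V\Theta) = d\big(f_0^*(\iota_V\Theta)\big),
\end{align*}
where $V$ is the velocity vector field of $f_t$ at $t=0$ (a section of $f_0^*T\T$) and we used Cartan's formula together with $d\Theta=0$. Applying this on $U$ with $f_0=F(0,\cdot)|_U$ and $\Theta=\overline\Theta$ gives $\frac{d}{dt}|_{t=0}F(t,\cdot)^*\Theta|_U=d\alpha_U$ for an explicit $(k-1)$-form $\alpha_U$ on $U$; applying it on each $U_{j,\alpha}$ with $f_0=\tilde F_{j,\alpha}(0,\cdot)$ and $\Theta$ itself gives $\frac{d}{dt}|_{t=0}\tilde F_{j,\alpha}(t,\cdot)^*\Theta=d\alpha_{j,\alpha}$. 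By Proposition \ref{prop pullback form} the left-hand sides patch to the globally defined form $\frac{d}{dt}|_{t=0}F(t,\cdot)^*\Theta$, so this global form is \emph{locally} exact; the primitives $\alpha_U$, $\alpha_{j,\alpha}$ are, however, only defined chartwise. To upgrade local exactness to global exactness I would observe that $\frac{d}{dt}|_{t=0}F(t,\cdot)^*\Theta=\frac{d}{dt}|_{t=0}\big(F(t,\cdot)^*\Theta - F(0,\cdot)^*\Theta\big)$ and that all the forms $F(t,\cdot)^*\Theta$ lie in a fixed de Rham cohomology class of $M$ — indeed $t\mapsto F(t,\cdot)$ is a homotopy, so $F(t,\cdot)^*\Theta$ and $F(0,\cdot)^*\Theta$ are cohomologous for each $t$ — hence the difference is exact for each $t$, and its $t$-derivative at $0$ is exact as well. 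Making the last sentence rigorous requires producing a primitive that depends smoothly on $t$: one takes the homotopy $H\colon I\times M\to\T/\{\pm1\}$, $H(t,x)=F(t,x)$, applies the orbifold pullback $H^*\Theta\in\Omega^k(I\times M)$ (well-defined by the chartwise construction above, again using Proposition \ref{prop pullback form} on $I\times M$), and uses the standard homotopy operator $K\colon\Omega^k(I\times M)\to\Omega^{k-1}(M)$, $K\beta=\int_0^t\iota_{\partial_s}\beta\,ds$, which satisfies $F(t,\cdot)^*\Theta-F(0,\cdot)^*\Theta = dK(H^*\Theta)\big|_t + K(dH^*\Theta)\big|_t = dK(H^*\Theta)\big|_t$ since $H^*\Theta$ is closed ($\Theta$ being closed, and $d$ commuting with orbifold pullback — a fact already used implicitly in Proposition \ref{prop pullback form}). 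Differentiating this identity in $t$ at $t=0$ exhibits $\frac{d}{dt}|_{t=0}F(t,\cdot)^*\Theta$ as $d\big(\frac{d}{dt}|_{t=0}K(H^*\Theta)\big)$, an exact form on $M$.

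The main obstacle is not the homotopy-operator computation, which is routine once one is on a manifold, but rather the bookkeeping at the eight orbifold points $\bar q_j$: one must check that $H^*\Theta$ really is a well-defined smooth closed form on $I\times M$, i.e. that the chartwise definitions $\tilde F_{j,\alpha}^*\Theta$ and $F^*\overline\Theta$ agree on overlaps for the \emph{family} $H$, not just for a single map. This is exactly the content of Proposition \ref{prop pullback form} applied with source manifold $I\times M$ instead of $M$, provided one first establishes that a smooth orbifold map $I\times M\to\T/\{\pm1\}$ admits local lifts near points mapping to $\bar q_j$ — which holds because the orbifold chart at $\bar q_j$ is $(B(q_j,\delta_0),\{\pm1\})$ and the defining data of a smooth orbifold map supplies exactly such lifts $\tilde f_{j,\alpha}$ by hypothesis (iii). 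So the proof is essentially: lift locally, differentiate using Cartan's formula chartwise, patch by Proposition \ref{prop pullback form}, and conclude using that the family of pullbacks stays in a fixed cohomology class via the homotopy operator on $I\times M$.
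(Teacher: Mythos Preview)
Your proof is correct and is essentially the paper's: both produce the global primitive $\eta=\iota_{\partial_t}(F^*\Theta)\big|_{\{0\}\times M}$, with the paper patching $\eta$ directly on $M$ by repeating the overlap argument of Proposition~\ref{prop pullback form}, while you instead apply that proposition on the source $I\times M$ to get a global closed form $F^*\Theta$ and then invoke the homotopy operator. Your concern that the chartwise primitives $\alpha_U,\alpha_{j,\alpha}$ might not patch is unfounded---they are exactly the local expressions of this same $\eta$ and do agree on overlaps, which is precisely what the paper checks; likewise the first paragraph's lifting discussion is unnecessary, since the charts $\hat U_{j,\alpha}\subset I\times M$ and lifts $\tilde F_{j,\alpha}$ are already part of the hypothesis that $F$ is smooth in the orbifold sense, as you yourself note at the end.
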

\begin{proof}
Put $f_t:=F(t,\cdot)$. 
Let $\hat{U},\hat{U}_{j,\alpha}$ be open sets of 
$I\times M$ such that $\hat{U}=F^{-1}(\overline{\T}_{{\rm reg}})$, 
$\hat{U}_{j,\alpha}\subset F^{-1}(B(\bar{q}_j,\delta_0))$ 
and 
$\{ 0\}\times M\subset \hat{U}\cup\bigcup_{j,\alpha}\hat{U}_{j,\alpha}$. 
Let $\tilde{F}_{j,\alpha}\colon\hat{U}_{j,\alpha}\to B(q_j,\delta_0)$ be smooth maps 
such that 
$\pi\circ\tilde{F}_{j,\alpha}=F|_{\hat{U}_{j,\alpha}}$. 

Fix $p\in M$. If $(0,p)\in \hat{U}$, then put 
\begin{align*}
w_p:=\left.\frac{d}{dt}\right|_{t=0}f_t(p)\in T_{f_0(p)}(\T/\{ \pm 1\}).
\end{align*}
If $(0,p)\in \hat{U}$, then $p\in f_0^{-1}(q_j)$ for some $j$ and there is $\alpha$ such that 
$(0,p)\in\hat{U}_{j,\alpha}$. We put 
\begin{align*}
\tilde{w}_p:=\left.\frac{d}{dt}\right|_{t=0}\tilde{F}_{j,\alpha}(t,p)\in T_{q_j}\T.
\end{align*}
Now, we take $U,U_{j,\alpha}$ such that 
\begin{align*}
(\{ 0\}\times M)\cap \hat{U}&=\{ 0\}\times U,\\
(\{ 0\}\times M)\cap \hat{U}_{j,\alpha}&=\{ 0\}\times U_{j,\alpha}
\end{align*}
and define $\eta\in\Omega^1(U)$ by 
\begin{align*}
\eta_p(v):=\overline{\Theta}_{f_0(p)}(w_p,df_0(v)),\quad v\in T_pU.
\end{align*}
Similarly, we define $\tilde{\eta}_{j,\alpha}\in\Omega^1(U_{j,\alpha})$ by 
\begin{align*}
(\tilde{\eta}_{j,\alpha})_p(v):=\Theta_{\tilde{f}_{j,\alpha,0}(p)}(\tilde{w}_p,d\tilde{f}_{j,\alpha,0}(v)),\quad v\in T_pU_{j,\alpha},
\end{align*}
where $\tilde{f}_{j,\alpha,t}=\tilde{F}_{j,\alpha}(t,\cdot)$. 
Now, we can extend $\eta$ to a smooth $1$-form on $M$ by 
$\eta|_{U_{j,\alpha}}:=\tilde{\eta}_{j,\alpha}$. 
By the argument similar to the proof of Proposition \ref{prop pullback form}, we can show that $\eta$ is well-defined and 
independent of the choice of 
$\hat{U}_{j,\alpha},U_{j,\alpha},\tilde{F}_{j,\alpha}$. 

Now, by the direct calculation, we can see 
\begin{align*}
\left.\frac{d}{dt}\right|_{t=0}f_t^*\overline{\Theta}|_{U}
&=d\eta,\\
\left.\frac{d}{dt}\right|_{t=0}\tilde{f}_{j,\alpha,t}^*\Theta|_{U_{j,\alpha}}
&=d\tilde{\eta}_{j,\alpha},
\end{align*}
which implies $\left.\frac{d}{dt}\right|_{t=0}f_t^*\Theta=d\eta$ on $M$. 
\end{proof}

For smooth maps $f_0,f_1\colon M\to \T/\{ \pm 1\}$, 
we say that $f_0$ and $f_1$ are homotopic 
if there is a smooth map $F\colon [0,1]\times M\to \T/\{\pm 1\}$ 
such that $F(0,\cdot)=f_0$ and $F(1,\cdot)=f_1$. 
In this case, we write $f_0\sim f_1$ and 
denote by $[f_0]$ the homotopy class represented by $f_0$. 

\begin{thm}
Let $M$ be a compact smooth manifold of dimension $m$ 
and $f\colon M\to \T/\{ \pm 1\}$ be a smooth map. 
Let $\omega\in\Omega^k(M)$ and $\Theta\in \Omega^{m-k}(\T)^{\{ \pm 1\}}$ be closed forms. 
Then the next quantity 
\begin{align*}
\int_M\omega\wedge f^*\Theta
\end{align*}
is determined by the homotopy class $[f]$. 
\label{thm homotopy inv}
\end{thm}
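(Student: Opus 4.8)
The plan is to show that $\int_M \omega\wedge f^*\Theta$ depends only on $[f]$ by differentiating along a smooth homotopy and appealing to Proposition~\ref{prop var form}. Concretely, suppose $f_0\sim f_1$ via a smooth map $F\colon[0,1]\times M\to\T/\{\pm1\}$. It suffices to prove that the function $t\mapsto \int_M \omega\wedge f_t^*\Theta$, where $f_t:=F(t,\cdot)$, is locally constant on $[0,1]$; since $[0,1]$ is connected this gives $\int_M\omega\wedge f_0^*\Theta=\int_M\omega\wedge f_1^*\Theta$. To get local constancy it is enough to show that for each $t_0$ the derivative $\frac{d}{dt}\big|_{t=t_0}\int_M\omega\wedge f_t^*\Theta$ exists and vanishes; after reparametrizing the homotopy near $t_0$ we may as well take $t_0=0$.

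First I would justify differentiating under the integral sign: $M$ is compact and $F$ is smooth, so $t\mapsto f_t^*\Theta$ is a smooth path in $\Omega^{m-k}(M)$ (in the $C^0$ topology on a fixed finite atlas this is routine), hence $\frac{d}{dt}\big|_{t=0}\int_M\omega\wedge f_t^*\Theta=\int_M\omega\wedge\frac{d}{dt}\big|_{t=0}f_t^*\Theta$. The one subtlety here is that $f_t^*\Theta$ is defined piecewise via Proposition~\ref{prop pullback form} (on $U$ via $f_t^*\overline\Theta$, and near the orbifold points $\bar q_j$ via the local lifts $\tilde f_{j,\alpha,t}$), so smoothness of the path and the validity of differentiation under the integral must be checked on each piece of the cover; but both $f^*\overline\Theta$ and the $\tilde f^*\Theta$ depend smoothly on $t$ by the smoothness of $F$ and of $\tilde F_{j,\alpha}$, and a partition of unity subordinate to the cover glues the local statements together.

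Next, by Proposition~\ref{prop var form}, since $\Theta$ is closed and $\{\pm1\}$-invariant, $\frac{d}{dt}\big|_{t=0}f_t^*\Theta = d\eta$ for some $\eta\in\Omega^{m-k-1}(M)$. Therefore
\begin{align*}
\left.\frac{d}{dt}\right|_{t=0}\int_M\omega\wedge f_t^*\Theta
= \int_M\omega\wedge d\eta
= (-1)^{k}\int_M d(\omega\wedge\eta)
= 0,
\end{align*}
using $d\omega=0$ and Stokes' theorem on the closed manifold $M$ (the sign $(-1)^k$ comes from $d(\omega\wedge\eta)=d\omega\wedge\eta+(-1)^k\omega\wedge d\eta$). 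This is the whole argument.

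The main obstacle I anticipate is not any single deep step but the bookkeeping around the orbifold target: one must make sure that the homotopy $F$ can be equipped with cover-and-lift data $\{\hat U,\hat U_{j,\alpha}\}$, $\tilde F_{j,\alpha}$ compatible with the data for $f_0$ and $f_1$, so that all the pieces $f_t^*\Theta$ vary smoothly in $t$ simultaneously and Proposition~\ref{prop var form} applies uniformly. This is exactly the setup already used in the proof of Proposition~\ref{prop var form}, so I would invoke it directly rather than redo it; the remaining content (differentiation under the integral, Stokes) is standard. One should also remark that only the \emph{closedness} of $\omega$ and $\Theta$ and \emph{compactness} of $M$ are used, and that the statement is genuinely about the \emph{orbifold} pullback, so that replacing $f$ by a homotopic map that passes through the singular points $\bar q_j$ is harmless.
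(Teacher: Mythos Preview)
Your proof is correct and follows essentially the same approach as the paper: differentiate $\int_M\omega\wedge f_t^*\Theta$ along the homotopy, invoke Proposition~\ref{prop var form} to write $\frac{d}{dt}f_t^*\Theta=d\eta_t$, and then use $d\omega=0$ together with Stokes' theorem on the closed manifold $M$ to conclude that the derivative vanishes. The extra care you take with differentiation under the integral and the orbifold bookkeeping is fine but not strictly needed beyond what Proposition~\ref{prop var form} already sets up.
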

\begin{proof}
Let $f_0,f_1\in [f]$ and take a smooth map $F\colon [0,1]\times M\to\T/\{\pm 1\}$ such that $F(0,\cdot)=f_0$ and $F(1,\cdot)=f_1$. 
By Proposition \ref{prop var form}, we can see 
$\frac{d}{dt}F(t,\cdot)^*\Theta=d\eta_t$ for some 
$\eta_t\in\Omega^{k-1}(M)$. 
Since $d\omega=0$, then we have 
\begin{align*}
\frac{d}{dt}\int_M\omega\wedge F(t,\cdot)^*\Theta
&=\int_M\omega\wedge d\eta_t\\
&=(-1)^k\int_Md(\omega\wedge \eta_t)-(-1)^k\int_Md\omega\wedge \eta_t\\
&=0
\end{align*}
by the Stokes' Theorem. 
Therefore, we can see $\int_M\omega\wedge f_0^*\Theta=\int_M\omega\wedge f_1^*\Theta$. 
\end{proof}

For a smooth map 
$f\colon M\to N$ between smooth Riemannian manifolds 
$(M,g_M)$ and $(N,g_N)$, 
we put $\| df\|:=\sqrt{{\rm tr}_{g_M}(f^*g_N)}$. 

Let $g_{\boldsymbol{\theta}}$ be as in Section \ref{subsec GH}, 
it descends to a Riemannian metric on $\overline{T}_{{\rm reg}}$ 
which we also denote by $g_{\boldsymbol{\theta}}$. 
For a smooth map 
$f\colon M\to \T/\{ \pm 1\}$ with the associating data 
$\{ U,U_{j,\alpha},\tilde{f}_{j,\alpha}\}$, 
$\| df\|\colon M\to \R$ can be defined on $U$ 
since $f|_U$ is a smooth map between 
Riemannian manifolds $(U,g_M|_U)$ and $(\overline{T}_{{\rm reg}},g_{\boldsymbol{\theta}})$. 
Moreover, we extend $\| df\|$ to $M$ by 
\begin{align*}
\| df\|\big|_{U_{j,\alpha}}&:=\| d\tilde{f}_{j,\alpha}\|.
\end{align*}
Then by the argument similar to the proof of Proposition \ref{prop pullback form}, we can also show the well-definedness of $\| df\|$.

\begin{definition}
\normalfont
Let $(M,g)$ be a compact Riemannian manifold and 
$g_{\boldsymbol{\theta}}$ be a flat Riemannian metric on $\T$. 
For a smooth map $f\colon M\to \T/\{ \pm 1\}$, 
the Dirichlet energy $\E(f)$ is defined by 
\begin{align*}
\E(f)=\E(f,g,g_{\boldsymbol{\theta}}):=\int_M \| df\|^2d\vol_g,
\end{align*}
where $\vol_g$ is the volume measure of $g$. 
\end{definition}

\subsection{Smooth maps giving the collapsing to $\T/\{ \pm 1\}$}

Let $X$ be the $K3$ surface obtained by 
Theorem \ref{thm k3 to t3}. 
In this subsection, we construct a family of smooth maps $F_\varepsilon\colon X\to \T/\{ \pm 1\}$ by gluing. 
\begin{prop}
Let $(M,g,\boldsymbol{\eta},K,R,\phi,\nu)$ be an ALF gravitational instanton of type $A_{k-1}$ (resp. type $D_m$). 
Then there is a smooth map $\nu'\colon M\to \R^3$ (resp. $\nu'\colon M\to \R^3/\{\pm 1\}$) such that 
$\nu'|_{M\setminus K}=\nu$. Here, the smoothness of maps to $\R^3/\{\pm 1\}$ is defined similarly to the case of maps to $\T/\{\pm 1\}$. \label{prop ext hk moment ALF}
\end{prop}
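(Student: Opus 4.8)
The plan is to extend the given map $\nu$, which is defined only on the non-compact piece $M\setminus K$, across the compact core $K$.

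Consider first the type $A_{k-1}$ case, where the target $\R^3$ is a vector space. By the structure of an ALF end the frontier $\partial K$ is a collared hypersurface on which, via $\phi$, $\nu$ agrees with the model map $\nu_{A_{k-1}}$, namely $(w,r)\mapsto r\,\nu_{S^3}(w)$; this model map clearly extends smoothly across $\partial K$, by extending the radial coordinate past $R$. Hence every point of $M$ lies in an open set carrying a smooth $\R^3$-valued map agreeing with $\nu$ on the intersection of that set with $M\setminus K$ (at interior points of $K$ take any smooth map). I would glue these by a partition of unity subordinate to such a cover: because $\R^3$ is a vector space the resulting convex combination is a well-defined smooth map $\nu'\colon M\to\R^3$, and it equals $\nu$ on $M\setminus K$ since there every local piece of positive weight already equals $\nu$. (Equivalently, whenever $M$ comes from the Gibbons--Hawking ansatz over $\R^3$---as do the spaces $M^A_i$ of Theorem~\ref{thm k3 to t3}, and as does every type $A_{k-1}$ ALF instanton by the classification of these spaces---one may take $\nu'$ to be the base projection, i.e.\ the hyper-K\"ahler moment map of the tri-Hamiltonian $U(1)$-action, normalised to agree with $\nu$ near infinity.)

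For type $D_m$ the target $\R^3/\{\pm1\}$ is a genuine orbifold with the single singular point $\bar 0$, so partition-of-unity averaging breaks down near the part of $K$ that must be sent to $\bar 0$. Here I would pass to the branched double cover $\R^3\to\R^3/\{\pm1\}$ and to the $\{\pm1\}$-covering picture of $M$. Since $\nu(M\setminus K)\subset (S^2/\{\pm1\})\times(R,\infty)$ misses $\bar 0$, pulling $\R^3\to\R^3/\{\pm1\}$ back along $\nu$ gives an unramified double cover $\widetilde\Omega\to\Omega:=M\setminus K$ carrying a $\{\pm1\}$-equivariant lift $\widetilde\nu\colon\widetilde\Omega\to\R^3\setminus\{0\}$; via $\phi$ this is the standard cover $H^{2(m-2)}_{(R,\infty)}\to H^{2(m-2)}_{(R,\infty)}/\{\pm1\}$ attached to the involution $([z,w]_{2(m-2)},r)\mapsto([w,-z]_{2(m-2)},r)$, and $\widetilde\nu$ is $\nu_{A_{2m-5}}$. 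Using the description of $M$ as a type $D_m$ ALF instanton---concretely, its presentation through a $\{\pm1\}$-symmetric type $A_{2m-5}$ ALF space $\widetilde M$---I would extend this double cover of the end, together with the equivariant map $\widetilde\nu$, over the core, running the type-$A$ argument of the previous paragraph $\{\pm1\}$-equivariantly (a symmetric choice of cover, local extensions and partition of unity, with $\{\pm1\}$-invariant charts near the fixed locus, where $\widetilde\nu$ is already equivariant). The resulting equivariant extension $\widetilde\nu'$ of $\widetilde\nu$ descends to a map $\nu'\colon M\to\R^3/\{\pm1\}$ with $\nu'|_\Omega=\nu$, and $\nu'$ is smooth in the orbifold sense because near the locus $\nu'^{-1}(\bar 0)$ it lifts, through the $\{\pm1\}$-covering, to the smooth equivariant $\widetilde\nu'$.

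The step I expect to be the main obstacle is precisely this last one in the type $D_m$ case: arranging that $\nu'$ is genuinely smooth in the orbifold sense along $\nu'^{-1}(\bar 0)$, i.e.\ that it admits a smooth---not merely continuous---local lift to $\R^3$ there. This cannot be settled by soft topology, since the smooth locus $(\R^3\setminus\{0\})/\{\pm1\}$ is homotopy equivalent to $\R P^2$ and hence not contractible, so $\nu'$ is forced to meet $\bar 0$ and the manner in which it does so is tied to the topology of the core; producing the lift requires the explicit $\{\pm1\}$-covering structure of the $D_m$ core, not just the contractibility of $\R^3/\{\pm1\}$ (which yields only a continuous extension). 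The remaining ingredients---the collar structure of $\partial K$ and the equivariant bookkeeping---are routine.
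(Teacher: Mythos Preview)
Your type $A_{k-1}$ argument matches the paper's: both take $\nu'$ to be the hyper-K\"ahler moment map of the Gibbons--Hawking $U(1)$-action.

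For type $D_m$ your strategy---pass to a $\{\pm1\}$-cover $\widetilde M$, extend equivariantly, descend---is the right shape, but the decisive step is left open. You invoke a ``presentation through a $\{\pm1\}$-symmetric type $A_{2m-5}$ ALF space $\widetilde M$'' without constructing it; for $m\le 2$ the index $2m-5$ is negative and no such ALF gravitational instanton exists (the double cover of the Atiyah--Hitchin manifold carries no complete hyper-K\"ahler metric and is not a Gibbons--Hawking space), so your type-$A$ argument cannot literally be re-run on $\widetilde M$. More fundamentally, whether the end cover $H^{2(m-2)}\to H^{2(m-2)}/\{\pm1\}$ extends over the compact core at all is a topological fact about $M$ that you do not verify; this is exactly the point where an equivariant partition of unity is not enough and one needs an explicit model of the core.

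The paper fills precisely this gap. For $m=0$ it identifies the Atiyah--Hitchin manifold with $E=\tilde E/\Z_2$, where $\tilde E=(S^3/\Z_4)\times_{U(1)}\C$ is an explicit complex line bundle over $S^2$ carrying the free involution $([u]_{\Z_4},z)\mapsto([uj]_{\Z_4},\bar z)$, and writes down by hand the equivariant map $\tilde{\mathcal F}(\zeta)=h_{\tilde E}(\zeta,\zeta)\cdot\pi_{\tilde E}(\zeta)\in\R^3$, checking that it descends to $E\to\R^3/\{\pm1\}$ and agrees with $\nu_{D_0}$ on the boundary sphere. For $m>0$ it then invokes the description of $D_m$ ALF spaces as the Atiyah--Hitchin manifold glued to type-$A$ pieces (cf.\ \cite[Remark~3.7]{Foscolo2019}) and combines the two constructions. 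In other words, the paper supplies exactly the ``explicit $\{\pm1\}$-covering structure of the $D_m$ core'' that you correctly flag as the main obstacle; your proposal is sound as an outline but stops where the actual construction begins.
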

\begin{rem}
\normalfont
In the above proposition, the smoothness of maps to $\R^3/\{\pm 1\}$ is defined similarly to the case of maps to $\T/\{\pm 1\}$. 
\end{rem}

\begin{proof}[Proof of Proposition \ref{prop ext hk moment ALF}]
In the case of type $A_{k-1}$, we can take $\nu'$ as the hyper-K\"ahler moment map associated with the Gibbons-Hawking construction of the ALF gravitational instanton of 
type $A_{k-1}$. 
Next, we consider the case of type $D_m$. 
First of all, assume $m=0$ for simplicity. Then $M$ is the Atiyah-Hitchin manifold constructed in \cite{AtiyahHitchin1988}. 
We follow the argument of \cite[Chapter 9]{AtiyahHitchin1988}. 
Put 
\[ 1_{\R^3}:=
\left (
\begin{array}{ccc}
1 & 0 & 0\\
0 & 1 & 0\\
0 & 0 & 1
\end{array}
\right ), e_1:=\left (
\begin{array}{ccc}
1 & 0 & 0\\
0 & -1 & 0\\
0 & 0 & -1
\end{array}
\right ), e_2:=\left (
\begin{array}{ccc}
-1 & 0 & 0\\
0 & -1 & 0\\
0 & 0 & 1
\end{array}
\right ),
\]
and $e_3:=e_1e_2$, which gives subgroups of $SO(3)$ defined by 
$G_0:=\{ 1_{\R^3},e_1\}$ and 
$G_1:=\{ 1_{\R^3},e_1,e_2,e_3\}$. 
Then there is a smooth function $\xi\colon M\to \R$ 
such that $\xi\ge 0$, $\xi^{-1}(0)\cong \R P^2$ 
and $\xi^{-1}(t)\cong SO(3)/G_1$ for $t>0$ as smooth manifolds. 
By these observations, we obtain the underlining manifold of $M$ as follows. 
Denote by $S^3\subset \quater$ the set of the quaternions of unit length, which is the Lie group isomorphic to $SU(2)$. 
Then the adjoint action ${\rm Ad}_g(x)=gxg^{-1}$ for $g\in S^3$ and $x\in {\rm Im}(\quater)=\R^3$ gives the double cover ${\rm Ad}\colon S^3\to SO(3)$. 
If we denote by $i,j,k$ the standard basis of ${\rm Im}(\quater)$, 
then the right action of the subgroup $D:=\{ \pm 1,\pm i,\pm j,\pm k\}$ 
gives the quotient space $S^3/D$ and ${\rm Ad}$ induces the isomorphism $S^3/D\cong SO(3)/G_1$. 
If we take a subgroup $\Z_4:=\{ \pm 1,\pm i\}\subset D$, then 
$S^3/\Z_4\cong SO(3)/G_0$ is a principal $U(1)$-bundle 
over $S^2$ by the action $[u]_{\Z_4}\cdot \lambda:=[u\lambda^{1/4}]_{\Z_4}$, where $[u]_{\Z_4}$ is the equivalent class replesented by $u\in S^3$ and $\lambda\in U(1)\subset \C$. The projection of the principal bundle is given by $[u]_{\Z_4}\mapsto {\rm Ad}_u(i)$. 
We obtain that the associate vector bundle $\tilde{E}:=(S^3/\Z_4)\times_{U(1)}\C$, which is the quotient space of $(S^3/\Z_4)\times\C$ by the equivalent relation $([u]_{\Z_4},z)\sim ([u\lambda^{1/4}]_{\Z_4},\lambda^{-1}z)$. 
The natural projection of the vector bundle is denoted by $\pi_{\tilde{E}}\colon\tilde{E}\to S^2$. 
Since $\tilde{E}$ has a free $\Z_2$-action defined by the involution $([u]_{\Z_4},z)\mapsto ([uj]_{\Z_4},\bar{z})$, then we obtain the quotient manifold $E:=\tilde{E}/\Z_2$, which is diffeomorphic to $M$. Here, the function $\xi$ is induced by $([u]_{\Z_4},z)\mapsto |z|^2$. 
Since $\tilde{E}$ is a vector bundle over $S^2$, we can take a fiber metric $h_{\tilde{E}}$ and the natural projection $\pi_{\tilde{E}}\colon\tilde{E}\to S^2$. Then the function on $\tilde{E}$ given by $\zeta\mapsto h_{\tilde{E}}(\zeta,\zeta)$ is smooth. 
Now, we regard $S^2$ as the unit sphere in $\R^3$ centered at the origin and denote by $I_{S^2}\colon S^2\to \R^3$ the inclusion map, which is smooth. 
Then we have a smooth map $\tilde{\mathcal{F}}\colon \tilde{E}\to \R^3$ by 
\begin{align*}
\tilde{\mathcal{F}}(\zeta):=h_{\tilde{E}}(\zeta,\zeta)\cdot I_{S^2}(\pi_{\tilde{E}}(\zeta)).
\end{align*}
If we denote by $\tau_{\tilde{E}}$ the involution on $\tilde{E}$, 
then we have 
\begin{align*}
h_{\tilde{E}}(\tau_{\tilde{E}}(\zeta),\tau_{\tilde{E}}(\zeta))
&=h_{\tilde{E}}(\zeta,\zeta),\\
\pi_{\tilde{E}}(\tau_{\tilde{E}}([u],z))
&={\rm Ad}_{uj}(i)=-{\rm Ad}_{u}(i)=-\pi_{\tilde{E}}(
[u],z),
\end{align*}
hence $\tilde{\mathcal{F}}\circ\tau_{\tilde{E}}=-\tilde{\mathcal{F}}$, 
which induces a smooth map $\mathcal{F}\colon M\to \R^3/\{ \pm 1\}$. 


Here, $\mathcal{F}|_{\del M}$ can be identified with $\nu_{D_0}|_{W^{-4}\times\{ 1\}}$, 
then we can extend $\nu$ to $M$ as a smooth map. 

Next, we consider the case of $m> 0$. 
By the argument in \cite[Remark 3.7]{Foscolo2019}, the underlining manifold of ALF gravitational instantons of type $D_{m}$ can be constructed by gluing Atiyah-Hitchin manifold and some ALF space of type $A_k$'s. 
Therefore, by combining the above arguments, we obtain a smooth extension of $\nu'$ to $M$. 
\end{proof}

\begin{prop}
Let $(\bm{m},\bm{k})\in\Z^{n+8}$, $(h,\theta)\in\mathcal{F}(S,\bm{m},\bm{k})$, $X,\{ U_\varepsilon,U^A_{i,\varepsilon},U^D_{j,\varepsilon}\}$ and 
 \begin{align*}
&(M^A_i,g^A_i,\boldsymbol{\eta}^A_i,K^A_i,R_0,\phi^A_i,\mu^A_i),\\
&(M^D_j,g^D_j,\boldsymbol{\eta}^D_j,K^D_j,R_0,\phi^D_j,\mu^D_j)
\end{align*}
be as in Theorem \ref{thm k3 to t3}. 
Then $\mu^A_i$ and $\mu^D_j$ can be extended to $M^A_i$, $M^D_j$, respectively, and there is a smooth map $F_\varepsilon\colon X\to \T/\{ \pm 1\}$ such that 
 \begin{align*}
F_\varepsilon|_{U_\varepsilon}
=\mu_{\varepsilon^{-1}+h},\quad
F_\varepsilon|_{U^A_{i,\varepsilon}}
=\varepsilon\mu^A_i,\quad
F_\varepsilon|_{U^D_{j,\varepsilon}}
=\varepsilon(1+\varepsilon\lambda_j)^{-1}\mu^D_j.
\end{align*}
Moreover, $[F_\varepsilon]=[F_{\varepsilon'}]$ for all sufficiently small $\varepsilon,\varepsilon'>0$. 
\label{prop glue Fe}
\end{prop}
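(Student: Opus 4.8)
The plan is to prove the proposition in three steps: (i) extend the ALF moment maps using Proposition \ref{prop ext hk moment ALF}; (ii) glue the three prescribed formulas into a single smooth map $F_\varepsilon\colon X\to\T/\{\pm1\}$, the content being that the prescriptions agree on overlaps; (iii) deduce $[F_\varepsilon]=[F_{\varepsilon'}]$ from the fact that the whole construction varies smoothly with $\varepsilon$. First I would apply Proposition \ref{prop ext hk moment ALF} to each ALF gravitational instanton $(M^A_i,g^A_i,\boldsymbol{\eta}^A_i,K^A_i,R_0,\phi^A_i,\mu^A_i)$ and $(M^D_j,\ldots,\mu^D_j)$ appearing in Theorem \ref{thm k3 to t3}, obtaining smooth extensions $\mu^A_i\colon M^A_i\to\R^3$ and $\mu^D_j\colon M^D_j\to\R^3/\{\pm1\}$ (the latter smooth in the orbifold sense) which restrict to $\nu^A_i$, $\nu^D_j$ outside the compact cores, under the isometric embeddings $S^2\times\R_+\hookrightarrow\R^3$ and $S^2/\{\pm1\}\times\R_+\hookrightarrow\R^3/\{\pm1\}$. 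I would also record the elementary observation that $\mu_{\varepsilon^{-1}+h}=\mu_h$: in the Gibbons--Hawking ansatz the principal $U(1)$-bundle, its base projection, the connection (up to gauge) and the involution $\tau$ depend only on the integral class $[\,{*}dh\,]=[\,{*}d(\varepsilon^{-1}+h)\,]\in H^2(\T^*,\Z)$, so adding the constant $\varepsilon^{-1}$ changes only the hyper-K\"ahler triple, not the manifold or the moment map.

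Next I would define $F_\varepsilon$ on the cover $\{U_\varepsilon,U^A_{i,\varepsilon},U^D_{j,\varepsilon}\}$ of Theorem \ref{thm k3 to t3} by the three stated formulas, where on $U^A_{i,\varepsilon}$ (resp.\ $U^D_{j,\varepsilon}$) the expression $\varepsilon\mu^A_i$ (resp.\ $\varepsilon(1+\varepsilon\lambda_j)^{-1}\mu^D_j$) is read as a map into $\T/\{\pm1\}$ through the chart identifying $B_\T(\bar p_i,\delta_0)$ with $B(0,\delta_0)\subset\R^3$ centered at $\bar p_i$ (resp.\ $B_\T(\bar q_j,\delta_0)$ with $B(\bar 0,\delta_0)\subset\R^3/\{\pm1\}$ centered at $\bar q_j$). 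These are well defined for small $\varepsilon$: on $U^A_{i,\varepsilon}=M^A_i|_{<2\varepsilon^{-3/5}}$ one has $|\mu^A_i|\le 2\varepsilon^{-3/5}$ ($\mu^A_i$ equals the radial coordinate on the end and is bounded on the fixed compact core), so $\varepsilon|\mu^A_i|\le 2\varepsilon^{2/5}<\delta_0$, and similarly on $U^D_{j,\varepsilon}$; on $U_\varepsilon=M_{\varepsilon^{-1}+h}|_{>\varepsilon^{2/5}}=M_h|_{>\varepsilon^{2/5}}$ the map $\mu_h$ stays at distance $>\varepsilon^{2/5}$ from every $\bar q_j$, hence lands in $\overline{\T}_{\mathrm{reg}}$ and is an honest smooth map there, and $\varepsilon\mu^A_i$ lands in $B_\T(\bar p_i,2\varepsilon^{2/5})$, which likewise avoids all $\bar q_j$ since $4\delta_0\le d_{\T/\{\pm1\}}(\bar p_i,\bar q_j)$. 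Thus only the pieces $U^D_{j,\varepsilon}$ meet the orbifold points, and there $F_\varepsilon$ is the composition of the orbifold-smooth $\mu^D_j$ with a scaling and an orbifold chart, hence orbifold-smooth; collecting these local lifts yields associating data as in Section \ref{sec map}. For consistency of the formulas I would first note that $U^A_{i,\varepsilon}$ and $U^D_{j,\varepsilon}$ are pairwise disjoint (for $U^A\cap U^D$ this is part of Theorem \ref{thm k3 to t3}(iii); distinct $U^A$'s, resp.\ distinct $U^D$'s, lie inside the disjoint balls $B_\T(\bar p_i,2\varepsilon^{2/5})$, resp.\ $B_\T(\bar q_j,2\varepsilon^{2/5})$, using $4\varepsilon^{2/5}<4\delta_0\le d_{\T/\{\pm1\}}(\cdot,\cdot)$ on $S/\{\pm1\}$), so the only intersections left are $U_\varepsilon\cap U^A_{i,\varepsilon}$ and $U_\varepsilon\cap U^D_{j,\varepsilon}$; on these Theorem \ref{thm k3 to t3}(iii) states precisely that under the gluing diffeomorphism $\mu_h$ read in the chart at $\bar p_i$ coincides with $\varepsilon\mu^A_i$ (resp.\ $\mu_h$ at $\bar q_j$ with $\varepsilon(1+\varepsilon\lambda_j)^{-1}\mu^D_j$), which is exactly the equality of the two definitions of $F_\varepsilon$ there. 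Hence $F_\varepsilon$ is a well-defined smooth map with the stated restrictions.

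For the homotopy invariance I would argue that $\varepsilon\mapsto[F_\varepsilon]$ is locally constant on the connected interval $(0,\varepsilon_0]$. All the gluing data depend smoothly and positively on $\varepsilon$ --- the gluing maps $v\mapsto\varepsilon^{-1}(v-p_i)$, the cut radii $\varepsilon^{2/5}$, $\varepsilon^{-3/5}$, and the scalars $\varepsilon$, $\varepsilon(1+\varepsilon\lambda_j)^{-1}$ --- so for $\varepsilon'$ near $\varepsilon$ one can fix a diffeomorphism $X_{\varepsilon'}\cong X_\varepsilon$ that is the identity on the common bulk $M_h|_{>3\varepsilon^{2/5}}$ and on the common cores $M^A_i|_{<\varepsilon^{-3/5}/2}$, $M^D_j|_{<\varepsilon^{-3/5}/2}$, and merely reparametrizes the intervals in the intermediate product cylinders $W^{k_i}\times(a,b)$ and $W^{2(m_j-2)}/\{\pm1\}\times(a,b)$. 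Under this identification $F_\varepsilon$ and $F_{\varepsilon'}$ both equal $\mu_h$ on the bulk, are joined on the cores by the linear rescaling homotopy of the moment maps (which stays inside $B_\T(\bar p_i,\delta_0)$, resp.\ $B_\T(\bar q_j,\delta_0)$, by the norm bounds above and is orbifold-smooth in the $D$ case because scaling lifts $\{\pm1\}$-equivariantly), and on the transition cylinders the two prescriptions are maps onto small annuli in $\T/\{\pm1\}$ that one interpolates within those annuli compatibly with the end homotopies. I expect the main obstacle to be precisely this last bookkeeping: arranging the identifications $X_{\varepsilon'}\cong X_\varepsilon$ together with all the accompanying homotopies \emph{coherently across the moving seams} so that they patch to one global homotopy $X_\varepsilon\times[0,1]\to\T/\{\pm1\}$; equivalently, realizing $\{(X_\varepsilon,F_\varepsilon)\}_{\varepsilon}$ as a smooth fibration over $(0,\varepsilon_0]$ carrying a fiberwise map to $\T/\{\pm1\}$ and using contractibility of the base to trivialize it. Once this is done, local constancy of $\varepsilon\mapsto[F_\varepsilon]$ and connectedness of $(0,\varepsilon_0]$ give $[F_\varepsilon]=[F_{\varepsilon'}]$ for all sufficiently small $\varepsilon,\varepsilon'$.
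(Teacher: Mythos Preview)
Your proposal is correct and follows essentially the same approach as the paper: extend the ALF moment maps via Proposition~\ref{prop ext hk moment ALF}, observe $\mu_{\varepsilon^{-1}+h}=\mu_h$, glue using the overlap identifications in Theorem~\ref{thm k3 to t3}(iii), and infer homotopy invariance from the smooth dependence of the construction on $\varepsilon$. The paper's own proof is far terser---it dispatches the homotopy statement as ``obvious by the construction''---so your expanded treatment of the smoothness checks, the landing of each piece in the correct chart, and the bookkeeping for the homotopy is more than adequate; if anything you are supplying detail the paper omits.
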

\begin{proof}
Since the hyper-K\"ahler moment map $\mu_h$ is independent of the additive constants, we have $\mu_{\varepsilon^{-1}+h}=\mu_h$. 
By Proposition \ref{prop ext hk moment ALF}, we can extend $\mu^A_i$ and $\mu^D_j$ to $M^A_i$, $M^D_j$, respectively. 
By $({\rm iii})$ of Theorem \ref{thm k3 to t3}, 
we can see that $\mu_h$ and $\varepsilon\mu^A_i,\varepsilon(1+\varepsilon\lambda_j)^{-1}\mu^D_j$ are the same map on the intersections of their domain. Hence we obtain a smooth map $F_\varepsilon$ by gluing these maps. 
The homotopy equivalence $[F_\varepsilon]=[F_{\varepsilon'}]$ is obvious by the construction. 
\end{proof}

\section{Lower bounds of Dirichlet energy}\label{sec cal}
\subsection{Linear maps from $\R^4$ to $\R^3$}
We denote by 
$e_0,e_1,e_2,e_3\in \R^4$ and 
$f_1,f_2,f_3$ the orthonormal basis, 
$e^i\in(\R^4)^*$ and $f^j\in(\R^3)^*$ the dual basis. 
In the following argument, let 
$(i,j,k)=(1,2,3)$, $(2,3,1)$ or $(3,1,2)$. 
Put 
\begin{align*}
\omega_i&:=e^0\wedge e^i+e^j\wedge e^k,\\
\eta_i&:=f^j\wedge f^k,
\end{align*}
then $\boldsymbol{\omega}=(\omega_1,\omega_2,\omega_3)$ 
is a hyper-K\"ahler triple on $\R^4$. 
Conversely, if $\boldsymbol{\omega}=(\omega_1,\omega_2,\omega_3)$ 
is a hyper-K\"ahler triple on $\R^4$, then there is a basis 
$e_0,\ldots,e_4$ of $\R^4$ such that 
$\omega_i$ has the above description and 
$e_0,\ldots,e_4$ form an orthonormal basis for  $g_{\boldsymbol{\omega}}$. 
Let $A\in M_{3,4}(\R)$, put 
$A(e_l)=\sum_{h=1}^3A_l^hf_h$ and denote by $A^*\colon (\R^3)^*\to (\R^4)^*$ the adjoint of $A$. 
We consider a constant $\tau(A)$ defined by 
\begin{align*}
\sum_{l=1}^3\omega_l\wedge A^*\eta_l&=\tau(A)e^0\wedge e^1\wedge e^2\wedge e^3.
\end{align*}
Since 
\begin{align*}
\omega_i\wedge A^*\eta_i
&=(A^j_0A^k_i-A^j_iA^k_0+A^j_jA^k_k-A^j_kA^k_j)
e^0\wedge e^1\wedge e^2\wedge e^3,
\end{align*}
we have 
\begin{align*}
\tau(A)
&=-\frac{(A^2_0-A^3_1)^2
+(A^2_0+A^1_3)^2+(A^1_3+A^3_1)^2}{2}\\
&\quad\quad 
-\frac{(A^3_0-A^1_2)^2
+(A^3_0+A^2_1)^2+(A^1_2+A^2_1)^2}{2}\\
&\quad\quad 
-\frac{(A^1_0-A^2_3)^2
+(A^1_0+A^3_2)^2
+(A^2_3 +A^3_2)^2}{2}\\
&\quad\quad 
-\frac{(A^1_1-A^2_2)^2
+(A^2_2-A^3_3)^2
+(A^1_1-A^3_3)^2}{2}
+{\rm tr}(A^*A).
\end{align*}
Therefore, we obtain 
\begin{align}
\tau(A)\le {\rm tr}(A^*A)\label{ineq ptwise}
\end{align}
and the equality is satisfied iff 
\begin{align}
A^1_1=A^2_2=A^3_3,\, 
A^2_0=A^3_1=-A^1_3,\,
A^3_0=A^1_2=-A^2_1,\,
A^1_0=A^2_3=-A^3_2.\label{eq cal A}
\end{align}

\begin{lem}
Let $A\in M_{3,4}(\R)$. 
We have $\tau(A)={\rm tr}(A^*A)$ iff $A=0$ or 
\begin{align*}
\omega_i
&=\theta\wedge A^*f^i+D\cdot A^*f^j\wedge A^*f^k,
\end{align*}
for some $\theta\in (\R^4)^*$, 
where $D=\sqrt{{\rm tr}(A^*A)/3}$. 
\label{lem calibrate linear alg}
\end{lem}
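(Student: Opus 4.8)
The plan is to start from the equality conditions \eqref{eq cal A} derived just above, which is the hard analytic work already done, and repackage them as a statement about the $1$-forms $A^* f^i \in (\R^4)^*$. First I would dispose of the trivial direction: if $A=0$ then both sides of $\tau(A)={\rm tr}(A^*A)$ vanish, and the displayed identity for $\omega_i$ holds with any $\theta$ since $D=0$ forces the right-hand side to be $0\neq\omega_i$ — so actually one must be slightly careful and state the claimed formula only in the case $A\neq 0$; I would phrase the lemma so that the "iff" splits as: equality holds $\iff$ ($A=0$) or ($A\neq 0$ and the $\omega_i$ have the stated form). Assuming $A\neq 0$ and that \eqref{eq cal A} holds, I would introduce the common values $a:=A^1_1=A^2_2=A^3_3$, and $b_1:=A^1_0=A^2_3=-A^3_2$, $b_2:=A^2_0=A^3_1=-A^1_3$, $b_3:=A^3_0=A^1_2=-A^2_1$, so that the matrix $A$ is completely determined by the four numbers $(a,b_1,b_2,b_3)$. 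Then $A^*f^i = \sum_l A^i_l e^l$ can be written explicitly: e.g. $A^*f^1 = b_1 e^0 + a e^1 - b_3 e^2 + b_2 e^3$, and cyclically for $A^*f^2, A^*f^3$.

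Next I would compute ${\rm tr}(A^*A)=\sum_{i,l}(A^i_l)^2 = 3a^2 + 3(b_1^2+b_2^2+b_3^2)$, hence $D^2 = {\rm tr}(A^*A)/3 = a^2 + b_1^2 + b_2^2 + b_3^2$. The candidate for $\theta$ is the natural one suggested by the $A_{k-1}$ Gibbons–Hawking picture: set $\theta := a e^0 + b_1 e^1 + b_2 e^2 + b_3 e^3$, which has $|\theta|^2 = D^2$; equivalently $\theta = A^*(\text{something})$ up to a correction, but writing it in coordinates is cleanest. One then verifies the displayed identity $\omega_i = \theta\wedge A^*f^i + D\, A^*f^j\wedge A^*f^k$ by a direct expansion of wedge products of these explicit $1$-forms, checking it against $\omega_i = e^0\wedge e^i + e^j\wedge e^k$. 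This is a finite, mechanical $2$-form computation in $\Lambda^2(\R^4)^*$ (six basis components per equation, three equations), and I would present it for $i=1$ and note the other two follow by the cyclic symmetry $(0\text{ fixed}, 1\to2\to3\to1)$ of both the data and the defining formulas.

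For the converse direction inside the $A\neq 0$ case — assuming the $\omega_i$ have the stated form, deduce equality — I would argue as follows. Given the representation, contract and wedge to recover the entries of $A$ in terms of $\theta$ and $D$: pairing the identity for $\omega_i$ appropriately against the basis shows that $A^i_l$ are exactly of the form dictated by $\theta$'s components and $D$, i.e. \eqref{eq cal A} holds with $a$ the coefficient of $e^i$ in $A^*f^i$ and $D=|\theta|$; since \eqref{eq cal A} is precisely the equality condition already established for \eqref{ineq ptwise}, we get $\tau(A)={\rm tr}(A^*A)$. Alternatively, and more cheaply, one can observe that the displayed formula forces $A^*f^1, A^*f^2, A^*f^3$ to be mutually orthogonal of equal norm $D$ and $\theta$ orthogonal to... — but I would not rely on that unless it falls out for free; the safe route is to read off the matrix entries and invoke \eqref{eq cal A}. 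I expect the only real obstacle to be bookkeeping: keeping the cyclic index conventions $(i,j,k)$ and the signs in $A^*f^i$ consistent throughout the wedge-product expansion, since a single sign error propagates. Everything else is routine linear algebra once \eqref{eq cal A} is in hand.
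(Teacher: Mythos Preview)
Your plan is the paper's: from \eqref{eq cal A} parametrize $A$ by four scalars, write the $\mu^i:=A^*f^i$ and a companion $1$-form explicitly, and verify the $\omega_i$-identity by brute expansion (the paper packages this as the change of basis $(e^l)\mapsto(\alpha,\mu^1,\mu^2,\mu^3)$ with $\alpha=ae^0-b_1e^1-b_2e^2-b_3e^3$ and inverts the resulting $4\times4$ matrix, but that is the same computation; note your candidate $\theta$ has the signs on the $b_i$ flipped, which your verification step would catch). For the converse the paper is slightly slicker than reading off entries in the original basis: the displayed formula itself exhibits a common rescaling of $(\theta,A^*f^1,A^*f^2,A^*f^3)$ as an orthonormal coframe putting $\bdomega$ in standard form, and in that new basis $A$ is diagonal, so \eqref{eq cal A} holds trivially.
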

\begin{proof}
Now we assume $\tau(A)={\rm tr}(A^*A)$ and put 
\begin{align*}
B&:=A^1_1=A^2_2=A^3_3,\quad
A^i:=A^i_0=A^j_k=-A^k_j,\\
\mu^i&:=A^*f^i
=A^ie^0+Be^i+A^ke^j-A^je^k,\\
\alpha&=Be^0-\sum_{l=1}^3A^le^l,
\end{align*}
then the representation matrix 
of 
\begin{align*}
(e^0,e^1,e^2,e^3)
\mapsto
(\alpha,A^*f^1,A^*f^2,A^*f^3)
\end{align*}
is given by 
\[ \tilde{A}=
\left (
\begin{array}{cccc}
B & A^1 & A^2 & A^3 \\
-A^1 & B & -A^3 & A^2\\
-A^2 & A^3 & B & -A^1\\
-A^3 & -A^2 & A^1 & B
\end{array}
\right ).
\]
If we put 
\begin{align*}
D:= B^2+\sum_{l=1}^3(A^l)^2,
\end{align*}
then 
\[ \tilde{A}^{-1}=D
\left (
\begin{array}{cccc}
B & -A^1 & -A^2 & -A^3 \\
A^1 & B & A^3 & -A^2\\
A^2 & -A^3 & B & A^1\\
A^3 & A^2 & -A^1 & B
\end{array}
\right ).
\]
Then we may write 
\begin{align*}
\omega_i
&=D^2\left( B\alpha +\sum_{l=1}^3A^l\mu^l\right)
\wedge \left( -A^i\alpha+B\mu^i -A^k\mu^j+A^j\mu^k\right)\\
&\quad\quad
+D^2\left( -A^j\alpha+B\mu^j -A^i\mu^k+A^k\mu^i\right)
\wedge \left( -A^k\alpha+B\mu^k -A^j\mu^i+A^i\mu^j\right)\\
&=D^2\left( B^2+\sum_{l=1}^3(A^l)^2\right)
(\alpha\wedge\mu^i+\mu^j\wedge\mu^k)\\
&=D(\alpha\wedge\mu^i+\mu^j\wedge\mu^k).
\end{align*}
By putting $\theta=D\alpha$, we have 
$\omega_i=\theta\wedge A^*f^i+D\cdot A^*f^j\wedge A^*f^k$. 
Conversely, if $A\neq 0$ and $\omega_i=\theta\wedge A^*f^i+D\cdot A^*f^j\wedge A^*f^k$, 
then $e^0=D^{-1/2}\theta,e^l=D^{1/2}A^*f^l$ form an orthonormal basis 
of $(\R^4)^*$ and the representation matrix of $A$ satisfies 
$A_i^i=D^{-1/2}$ and otherwise $A_i^j=0$, hence we have \eqref{eq cal A}. 
\end{proof}

\begin{thm}
Let $\hat{Y}\subset\T$, $h\in C^\infty(\hat{Y})$, $\hat{\boldsymbol{\eta}}_h$, $\hat{\mu}_h\colon \hat{M}_h\to\hat{Y}$, $\theta,\theta_1,\theta_2,\theta_3$ be as in Section \ref{subsec GH}. 
Then 
\begin{align*}
\sum_{i=1}^3\hat{\eta}_{h,i}\wedge\hat{\mu}_h^*(\theta_j\wedge\theta_k)
={\rm tr}_{g_{\hat{\boldsymbol{\eta}}_h}}(\hat{\mu}_h^*g_{\T}){\rm vol}_{g_{\hat{\boldsymbol{\eta}}_h}}.
\end{align*}
\label{thm calibrate gh}
\end{thm}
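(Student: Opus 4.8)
The plan is to verify this identity of global $4$-forms on $\hat{M}_h$ by a direct computation in the coframe $\{\theta,\theta_1,\theta_2,\theta_3\}$ of $\hat{M}_h$, where $\theta$ is the connection $1$-form and, following Section~\ref{subsec GH}, the forms $\theta_i$ on $\T$ are identified with their pullbacks $\hat{\mu}_h^{*}\theta_i$. Recall from that section that $\hat{\eta}_{h,i}=\theta_i\wedge\theta+h\,\theta_j\wedge\theta_k$ and $g_{\hat{\boldsymbol{\eta}}_h}=h^{-1}\theta^{2}+h\sum_l\theta_l^{2}$, so that $\{h^{-1/2}\theta,\,h^{1/2}\theta_1,\,h^{1/2}\theta_2,\,h^{1/2}\theta_3\}$ is an orthonormal coframe and $|\theta_l|^{2}_{g_{\hat{\boldsymbol{\eta}}_h}}=h^{-1}$ for each $l$.

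First I would compute the left-hand side. Since $\theta_j\wedge\theta_k\wedge\theta_j\wedge\theta_k=0$, the term $\hat{\eta}_{h,i}\wedge\hat{\mu}_h^{*}(\theta_j\wedge\theta_k)$ collapses to $\theta_i\wedge\theta\wedge\theta_j\wedge\theta_k=-\theta\wedge\theta_1\wedge\theta_2\wedge\theta_3$, using that $(i,j,k)$ is a cyclic permutation of $(1,2,3)$; summing over $i=1,2,3$ gives $-3\,\theta\wedge\theta_1\wedge\theta_2\wedge\theta_3$. Next I would pin down the volume form: $(\hat{\eta}_{h,1})^{2}=2h\,\theta_1\wedge\theta\wedge\theta_2\wedge\theta_3=-2h\,\theta\wedge\theta_1\wedge\theta_2\wedge\theta_3$, and since $\vol_{g_{\hat{\boldsymbol{\eta}}_h}}=(\hat{\eta}_{h,1})^{2}/2$ (the volume Remark in Section~\ref{sec HK triple}) this gives $\theta\wedge\theta_1\wedge\theta_2\wedge\theta_3=-h^{-1}\vol_{g_{\hat{\boldsymbol{\eta}}_h}}$, so the left-hand side equals $3h^{-1}\vol_{g_{\hat{\boldsymbol{\eta}}_h}}$. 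On the other hand $\hat{\mu}_h^{*}g_{\T}=\theta_1^{2}+\theta_2^{2}+\theta_3^{2}$, hence ${\rm tr}_{g_{\hat{\boldsymbol{\eta}}_h}}(\hat{\mu}_h^{*}g_{\T})=\sum_l|\theta_l|^{2}_{g_{\hat{\boldsymbol{\eta}}_h}}=3h^{-1}$, and comparing the two computations proves the identity.

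I would also record the conceptual version, which is the reason this statement sits in Section~\ref{sec cal}: it says the Gibbons--Hawking moment map realizes equality in the pointwise energy bound \eqref{ineq ptwise}. Working pointwise in the orthonormal coframe $e^{0}:=-h^{-1/2}\theta$, $e^{i}:=h^{1/2}\theta_i$ one checks $\hat{\eta}_{h,i}=e^{0}\wedge e^{i}+e^{j}\wedge e^{k}$, i.e.\ the standard hyper-K\"ahler triple, and $d\hat{\mu}_h$ is then represented by the matrix $A$ with $A^{*}f^{i}=h^{-1/2}e^{i}$ and all remaining entries $0$. This $A$ satisfies the equality condition \eqref{eq cal A}, so Lemma~\ref{lem calibrate linear alg} gives $\tau(A)={\rm tr}(A^{*}A)$, which unwinds to exactly the claimed identity, both sides being $3h^{-1}\vol_{g_{\hat{\boldsymbol{\eta}}_h}}$. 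The only point needing genuine care in either route is the sign and $h$-power bookkeeping -- in particular that the orientation carried by $(\hat{\eta}_{h,1})^{2}$ forces the minus sign in $e^{0}=-h^{-1/2}\theta$, equivalently that $\vol_{g_{\hat{\boldsymbol{\eta}}_h}}=-h\,\theta\wedge\theta_1\wedge\theta_2\wedge\theta_3$ and not $+h\,\theta\wedge\theta_1\wedge\theta_2\wedge\theta_3$; everything else is a one-line wedge computation.
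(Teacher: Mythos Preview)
Your proposal is correct. The paper's own proof is exactly your ``conceptual version'': it fixes a point $p\in\hat{M}_h$, sets $A:=(d\hat{\mu}_h)_p$, and invokes Lemma~\ref{lem calibrate linear alg} to obtain the equality $\tau(A)={\rm tr}(A^{*}A)$, which is the desired identity pointwise. Your verification that the Gibbons--Hawking triple takes the standard form $e^{0}\wedge e^{i}+e^{j}\wedge e^{k}$ in the orthonormal coframe $e^{0}=-h^{-1/2}\theta$, $e^{i}=h^{1/2}\theta_i$, and that $A$ then satisfies \eqref{eq cal A}, spells out precisely what the paper leaves implicit.

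Your first argument, the direct wedge computation in the coframe $\{\theta,\theta_1,\theta_2,\theta_3\}$, is a genuinely different and more elementary route: it bypasses Lemma~\ref{lem calibrate linear alg} entirely and checks the identity by hand, reducing both sides to $3h^{-1}\vol_{g_{\hat{\boldsymbol{\eta}}_h}}$. This buys independence from the linear-algebra lemma and makes the result self-contained, at the cost of obscuring the calibration interpretation (namely, that $\hat{\mu}_h$ saturates the pointwise bound \eqref{ineq ptwise}) which is the reason the statement appears in Section~\ref{sec cal}. The paper's approach, by contrast, makes that interpretation the heart of the proof. Your care with the orientation sign---that $\vol_{g_{\hat{\boldsymbol{\eta}}_h}}=-h\,\theta\wedge\theta_1\wedge\theta_2\wedge\theta_3$---is well placed and correct.
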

\begin{proof}
We fix $p\in\hat{M}_h$ and put 
$A:=(d\hat{\mu}_h)_p\colon T_p\hat{M}_h\to \R^3$. 
Applying Lemma \ref{lem calibrate linear alg} to $A$, 
we obtain the equality. 
\end{proof}

\begin{rem}
\normalfont
The above theorem says that 
the hyper-K\"ahler moment map associated with 
the Gibbons-Hawking ansatz is a calibrated map in the sense of 
\cite{hattori2024calibrated}. 
\end{rem}

\begin{thm}
Let $X$ be a smooth compact manifold of dimension $4$ 
and $\boldsymbol{\omega}=(\omega_1,\omega_2,\omega_3)$ be a triple of closed $2$-forms on $X$, 
$g_{\boldsymbol{\theta}}=\theta_1^2+\theta_2^2+\theta_3^2$ be as in Section \ref{subsec GH}. 
Let $f\colon X\to \T/\{ \pm 1\}$ be a smooth map. 
We put $\Theta_1:=\theta_2\wedge\theta_3$, $\Theta_2:=\theta_3\wedge\theta_1$, $\Theta_3:=\theta_1\wedge\theta_2$.
\begin{itemize}
\setlength{\parskip}{0cm}
\setlength{\itemsep}{0cm}
 \item[$({\rm i})$] For $i,j=1,2,3$, the following quantities 
\begin{align*}
\mathcal{I}_{ij}(f,\boldsymbol{\omega},\boldsymbol{\theta})
&:=\int_X\omega_i\wedge f^*(\Theta_j)
\end{align*}
are determined by the homotopy class $[f]$. 
 \item[$({\rm ii})$] If $\boldsymbol{\omega}$ is a hyper-K\"ahler triple on $X$, we have $\sum_{i=1}^3\mathcal{I}_{ii}(f,\boldsymbol{\omega},\boldsymbol{\theta})\le \E(f,g_{\boldsymbol{\omega}},g_{\boldsymbol{\theta}})$. 
\end{itemize}
\label{thm I<E}
\end{thm}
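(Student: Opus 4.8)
\textbf{Proof proposal for Theorem \ref{thm I<E}.}

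The plan is to prove (i) as a direct application of Theorem \ref{thm homotopy inv}, and (ii) by a pointwise comparison built on the linear-algebra computation of Lemma \ref{lem calibrate linear alg} (equivalently, inequality \eqref{ineq ptwise}), handled separately on the regular part $U$ and near the orbifold points $U_{j,\alpha}$.

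For part (i): each $\Theta_j=\theta_k\wedge\theta_l$ is a parallel, hence closed, $2$-form on $\T$, and it is $\{\pm 1\}$-invariant since the involution $x\mapsto -x$ acts on $\theta_1,\theta_2,\theta_3$ by $-1$, so on each product $\theta_k\wedge\theta_l$ by $(-1)^2=1$; thus $\Theta_j\in\Omega^2(\T)^{\{\pm 1\}}$. The $2$-forms $\omega_i$ on $X$ are closed by hypothesis, and $\dim X=4=2+2$, so Theorem \ref{thm homotopy inv} (with $m=4$, $k=2$, $\omega=\omega_i$, $\Theta=\Theta_j$) applies verbatim and gives that $\int_X\omega_i\wedge f^*\Theta_j$ depends only on $[f]$. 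This is essentially immediate.

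For part (ii): write $g=g_{\bdomega}$ for the hyper-K\"ahler metric. On the open set $U=f^{-1}(\overline{\T}_{\rm reg})$ the map $f$ is a genuine smooth map of Riemannian manifolds. Fix $x\in U$; by the characterization in Section \ref{sec cal} choose an orthonormal basis $e_0,\dots,e_3$ of $(T_xX,g)$ in which $\omega_i=e^0\wedge e^i+e^j\wedge e^k$, and let $A\in M_{3,4}(\R)$ be the matrix of $df_x$ in this basis and the basis $\theta_1,\theta_2,\theta_3$ of $T^*_{f(x)}\overline{\T}_{\rm reg}$. Then $f^*\Theta_j = f^*(\theta_k\wedge\theta_l) = A^*f^k\wedge A^*f^l$ in the notation of Lemma \ref{lem calibrate linear alg}, so $\sum_i\omega_i\wedge f^*\Theta_i = \tau(A)\,\vol_g$ pointwise, while $\|df\|^2(x) = {\rm tr}(A^*A)$ and $\vol_g$ is the volume form. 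Inequality \eqref{ineq ptwise} gives $\tau(A)\le{\rm tr}(A^*A)$ pointwise on $U$. Near $q_j$ we use the local lift $\tilde f_{j,\alpha}\colon U_{j,\alpha}\to B(q_j,\delta_0)$: by Proposition \ref{prop pullback form}, $f^*\Theta_j = \tilde f_{j,\alpha}^*(\theta_k\wedge\theta_l)$ there, and by definition $\|df\|=\|d\tilde f_{j,\alpha}\|$, so the same pointwise inequality $\sum_i\omega_i\wedge f^*\Theta_i\le\|df\|^2\vol_g$ holds on each $U_{j,\alpha}$ as well, applied to the matrix of $d\tilde f_{j,\alpha}$. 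Since $\{U,U_{j,\alpha}\}$ covers $X$ and both sides are globally defined, the inequality of $4$-forms holds on all of $X$; integrating yields $\sum_i\mathcal{I}_{ii}(f,\bdomega,\boldsymbol{\theta})\le\E(f,g_{\bdomega},g_{\boldsymbol{\theta}})$.

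The only mild subtlety is making sure the pointwise identity $\sum_i\omega_i\wedge f^*\Theta_i=\tau(A)\vol_g$ is correctly assembled from the two descriptions of $f^*\Theta_j$ on overlaps, which is exactly what Proposition \ref{prop pullback form} guarantees, so no difficulty arises there. I expect the main (entirely routine) obstacle to be purely bookkeeping: checking that the orientation conventions and the normalization $\vol_g=\omega_1^2/2$ are consistent with the sign in which $\tau(A)$ was defined via $\sum_i\omega_i\wedge A^*\eta_i=\tau(A)\,e^0\wedge e^1\wedge e^2\wedge e^3$, so that \eqref{ineq ptwise} really translates into $\sum_i\omega_i\wedge f^*\Theta_i\le\|df\|^2\,d\vol_g$ and not its negative.
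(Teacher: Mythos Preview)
Your proposal is correct and follows essentially the same approach as the paper: part (i) is an immediate application of Theorem \ref{thm homotopy inv} once one notes that each $\Theta_j$ is closed and $\{\pm 1\}$-invariant, and part (ii) follows by integrating the pointwise inequality \eqref{ineq ptwise}. Your version is more detailed than the paper's (you spell out the orbifold charts and the orientation check), but the argument is the same.
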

\begin{proof}
$({\rm i})$ 
Since $\theta_i\wedge\theta_j$ is a closed $2$-form on $\T$ 
which is invariant under the $\{ \pm 1\}$-action, 
then we have $\mathcal{I}_{ij}(f_0,\boldsymbol{\omega},\boldsymbol{\theta})=\mathcal{I}_{ij}(f_1,\boldsymbol{\omega},\boldsymbol{\theta})$ if $f_0$ is homotopic to $f_1$ by Theorem \ref{thm homotopy inv}. 

$({\rm ii})$ We have the pointwise inequality  
\begin{align*}
\frac{\omega_1\wedge f^*(\theta_2\wedge\theta_3)
+\omega_2\wedge f^*(\theta_3\wedge\theta_1)
+\omega_3\wedge f^*(\theta_1\wedge\theta_2)}{{\rm vol}_{g_{\boldsymbol{\omega}}}}
\le \| df\|^2
\end{align*}
by \eqref{ineq ptwise}. 
Integrating the both-hand-sides for the volume measure, 
we obtain $\sum_{i=1}^3\mathcal{I}_{ii}(f,\boldsymbol{\omega},\boldsymbol{\theta})\le \E(f,g_{\boldsymbol{\omega}},g_{\boldsymbol{\theta}})$. 
\end{proof}

\section{Estimates on the energy}\label{sec main estimate}
In this section, we give some estimates for $\mathcal{I}(F_\varepsilon,\tilde{\boldsymbol{\omega}}_\varepsilon,\boldsymbol{\theta}),\E(F_\varepsilon,g_{\tilde{\boldsymbol{\omega}}_\varepsilon},g_{\boldsymbol{\theta}})$ for the hyper-K\"ahler triples $\tilde{\boldsymbol{\omega}}_\varepsilon$ given by Theorem \ref{thm k3 to t3} and the smooth maps $F_\varepsilon$ given by Proposition \ref{prop glue Fe}.
This section aims to show 
\begin{align*}
\sum_{i=1}^3\mathcal{I}_{ii}(F_\varepsilon,\tilde{\boldsymbol{\omega}}_\varepsilon,\boldsymbol{\theta})>0,\quad
\lim_{\varepsilon\to 0}\frac{\E(F_\varepsilon,g_{\tilde{\boldsymbol{\omega}}_\varepsilon},g_{\boldsymbol{\theta}})}{\sum_{i=1}^3\mathcal{I}_{ii}(F_\varepsilon,\tilde{\boldsymbol{\omega}}_\varepsilon,\boldsymbol{\theta})}= 1.
\end{align*}
As we have shown in Theorem \ref{thm I<E}, 
$\E(f,g_{\tilde{\boldsymbol{\omega}}_\varepsilon},g_{\boldsymbol{\theta}}) / \sum_{i=1}^3\mathcal{I}_{ii}(F_\varepsilon,\tilde{\boldsymbol{\omega}}_\varepsilon,\boldsymbol{\theta})\ge 1$ always holds for any smooth map $f$ if the denominator is positive.

\subsection{Estimates on Riemannian manifold}
Here, we give some elementary estimates on general Riemannian manifolds and smooth maps. 
\begin{lem}
Let $(M,g)$ be an oriented Riemannian manifold of dimension $4$, $\alpha\in\Omega^k(M)$, 
$\beta\in\Omega^l(M)$. 
Assume that $\sup(|\alpha|_g|\beta|_g)$ and $\vol_g(M)$ are finite, where $\vol_g$ is the volume measure of $g$. 
Let $C$ be a positive constant appearing in Lemma \ref{lem norm asso}. 
Then we have $\left| \int_M\alpha\wedge\beta\right|\le C\vol_g(M)\sup(|\alpha|_g|\beta|_g)$. 
\end{lem}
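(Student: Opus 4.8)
The plan is to reduce the global estimate to the pointwise comparison between the wedge product of two forms and the product of their norms, then integrate. First I would apply Lemma~\ref{lem norm asso} pointwise: at each $x \in M$ we have $|\alpha_x \wedge \beta_x|_g \le C\, |\alpha_x|_g\, |\beta_x|_g$, where $C$ depends only on $\dim M = 4$. Since $\alpha \wedge \beta$ is a $(k+l)$-form on the oriented $4$-manifold $M$, it is automatically supported in degree $\le 4$; if $k+l \ne 4$ the integral is zero and there is nothing to prove, so assume $k + l = 4$. Then $\alpha \wedge \beta = \phi\, \vol_g$ for a function $\phi$ on $M$, and $|\phi| = |\alpha \wedge \beta|_g \le C\, |\alpha|_g\, |\beta|_g \le C \sup(|\alpha|_g |\beta|_g)$ everywhere.

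The remaining step is to integrate this bound. We compute
\begin{align*}
\left| \int_M \alpha \wedge \beta \right|
= \left| \int_M \phi \, d\vol_g \right|
\le \int_M |\phi| \, d\vol_g
\le C \sup(|\alpha|_g |\beta|_g) \int_M d\vol_g
= C\, \vol_g(M) \sup(|\alpha|_g |\beta|_g),
\end{align*}
where the finiteness hypotheses on $\sup(|\alpha|_g |\beta|_g)$ and $\vol_g(M)$ guarantee that all the integrals converge and the manipulation is legitimate (in particular $\phi$ is integrable).

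There is essentially no obstacle here: the only mild point to be careful about is recording that the pointwise norm $|\alpha \wedge \beta|_g$ of a top-degree form equals the absolute value of its coefficient relative to the volume form, which is where the orientation hypothesis enters, and confirming that the constant $C$ from Lemma~\ref{lem norm asso} is the same one quoted in the statement. Everything else is the triangle inequality for integrals.
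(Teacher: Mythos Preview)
Your proof is correct and is essentially the same as the paper's: both use Lemma~\ref{lem norm asso} to obtain the pointwise bound $|\alpha\wedge\beta|_g\le C|\alpha|_g|\beta|_g$, interpret this as a bound on the coefficient of $\alpha\wedge\beta$ relative to the volume form, and then integrate. The paper simply states the pointwise inequality as $-C|\alpha|_g|\beta|_g\,d\vol_g\le \alpha\wedge\beta\le C|\alpha|_g|\beta|_g\,d\vol_g$ and integrates, whereas you spell out the function $\phi$ explicitly and note the trivial case $k+l\neq 4$; these are cosmetic differences only.
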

\begin{proof}
By Lemma \ref{lem norm asso}, we have a pointwise inequality
\begin{align*}
-C|\alpha|_g|\beta|_gd\vol_g\le \alpha\wedge\beta \le C|\alpha|_g|\beta|_gd\vol_g.
\end{align*}
By integrating the above inequality, we have the assertion. 
\end{proof}

For a smooth map $f\colon M\to N$ between Riemannian manifolds $(M,g_M)$ and $(N,h_N)$, we have defined 
$\| df\|:=\sqrt{{\rm tr}_{g_M}(f^*g_N)}$. 
To emphasize the dependence on the metrics, we write 
$\| df\|_{g_M,g_N}=\| df\|$. 

\begin{lem}
Let $(M,g_M)$, $(N,g_N)$ be smooth Riemannian manifolds 
and $f\colon M\to N$ be a smooth map. 
Then 
$|f^*\alpha|_{g_M}\le \| df\|_{g_M,g_N}^kf^*(|\alpha|_{g_N})$ 
for any $\alpha\in\Omega^k(N)$. 
\label{lem map norm}
\end{lem}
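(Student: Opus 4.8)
The statement to prove is Lemma~\ref{lem map norm}: for a smooth map $f\colon M\to N$ between Riemannian manifolds and $\alpha\in\Omega^k(N)$, one has $|f^*\alpha|_{g_M}\le \| df\|_{g_M,g_N}^k\, f^*(|\alpha|_{g_N})$.

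The plan is to work pointwise at a fixed $x\in M$ and reduce everything to linear algebra on the single linear map $df_x\colon T_xM\to T_{f(x)}N$. First I would choose an orthonormal basis $\{v_1,\dots,v_m\}$ of $T_xM$ and an orthonormal basis $\{w_1,\dots,w_n\}$ of $T_{f(x)}N$; write $df_x(v_a)=\sum_h A_a^h w_h$ so that $A$ is the representation matrix with $\|df\|_{g_M,g_N}(x)^2={\rm tr}({}^t\!AA)=\sum_{a,h}(A_a^h)^2$. For a decomposable $\alpha=\beta^{1}\wedge\cdots\wedge\beta^{k}$ with each $\beta^{p}\in T^*_{f(x)}N$, the pullback is $f^*\alpha = (A^*\beta^1)\wedge\cdots\wedge(A^*\beta^k)$, and the key inequality is the operator-norm bound $|A^*\beta|_{g_M}\le \|df\|(x)\,|\beta|_{g_N}$, which is just the statement that the operator norm of $A^*$ (equivalently of $A$) is bounded by its Hilbert--Schmidt norm $\|df\|(x)$. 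Then Lemma~\ref{lem norm asso} (applied with its constant, or more carefully with constant $1$ for wedge products of orthonormal-type factors — but invoking Lemma~\ref{lem norm asso} suffices if we are content with the stated clean inequality, though here the claimed inequality has no constant, so I would instead use the elementary fact that for $1$-forms $|\gamma^1\wedge\cdots\wedge\gamma^k|_g\le |\gamma^1|_g\cdots|\gamma^k|_g$) gives $|f^*\alpha|_{g_M}\le \prod_p |A^*\beta^p|_{g_M}\le \|df\|(x)^k\prod_p|\beta^p|_{g_N}=\|df\|(x)^k\,|\alpha|_{g_N}(f(x))$.

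For a general (non-decomposable) $\alpha$, the step I would carry out next is to diagonalize: at the point $f(x)$, by the singular value / canonical-form decomposition of an alternating $k$-form (or simply by choosing an adapted orthonormal coframe), write $\alpha_{f(x)}=\sum_I c_I\, e^{I}$ in an orthonormal basis of $k$-forms indexed by increasing multi-indices $I$, so that $|\alpha|_{g_N}(f(x))^2=\sum_I c_I^2$, and the $e^I$ pull back to pairwise orthogonal... — actually they need not pull back orthogonally, so the cleanest route is: estimate $|f^*\alpha|_{g_M}\le \sum_I |c_I|\,|f^*e^I|_{g_M}\le \sum_I|c_I|\,\|df\|(x)^k$, which gives $|f^*\alpha|_{g_M}\le \|df\|(x)^k\sum_I|c_I|$; this is weaker than claimed by the mismatch between $\ell^1$ and $\ell^2$ norms of $(c_I)$. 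To recover the sharp inequality I would instead argue via the adjoint/pullback map on forms directly: the induced map $\Lambda^k(df_x)^*\colon \Lambda^k T^*_{f(x)}N\to \Lambda^k T^*_xM$ has operator norm equal to the product of the $k$ largest singular values of $df_x$, each of which is $\le \|df\|(x)$; hence its operator norm is $\le \|df\|(x)^k$, and applying this to the vector $\alpha_{f(x)}$ of norm $|\alpha|_{g_N}(f(x))$ yields exactly $|f^*\alpha|_{g_M}\le\|df\|(x)^k\,|\alpha|_{g_N}(f(x))$. Integrating or rather just reading this off at every $x$ gives the lemma.

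The main obstacle is precisely this last sharpness point: getting the clean constant-free inequality requires the singular-value characterization of the operator norm of $\Lambda^k$ of a linear map, i.e.\ $\|\Lambda^k T\|_{op}=\sigma_1(T)\cdots\sigma_k(T)$ where $\sigma_1\ge\sigma_2\ge\cdots$ are the singular values, together with $\sigma_i(T)\le \|T\|_{HS}=\|df\|(x)$ for each $i$. Everything else — functoriality of pullback, pointwise reduction, the decomposable case — is routine. I would present the proof by first recording the pointwise operator-norm bound on $1$-forms, then invoking $\|\Lambda^k\|_{op}\le \|T\|_{op}^k$ (or the product-of-singular-values identity) to conclude, keeping the computation short since the linear-algebra facts are standard.
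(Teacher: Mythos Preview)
Your final argument via the operator norm of $\Lambda^k(df_x)^*$ is correct and is essentially the same singular-value idea the paper uses. The paper, however, carries it out more directly and thereby sidesteps the $\ell^1/\ell^2$ detour you went through: it chooses orthonormal bases $\{e_i\}$ of $T_xM$ and $\{E_i\}$ of $T_{f(x)}N$ that \emph{simultaneously} diagonalize $df_x$, i.e.\ $A(e_i)=a_iE_i$. In these bases $A^*E^I=a_{i_1}\cdots a_{i_k}\,e^I$, and because the $e^I$ are themselves orthonormal one gets
\[
|A^*\alpha|^2=\sum_I|\alpha_I|^2(a_{i_1}\cdots a_{i_k})^2\le\Bigl(\sum_i a_i^2\Bigr)^k\sum_I|\alpha_I|^2={\rm tr}({}^t\!AA)^k|\alpha|^2
\]
with no triangle inequality needed. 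This is exactly the point where your first attempt stalled: picking the SVD basis on the \emph{domain} side is what makes the pulled-back basis $k$-forms orthogonal. Your abstract route (quoting $\|\Lambda^k T\|_{op}=\sigma_1\cdots\sigma_k\le\|T\|_{HS}^k$) and the paper's explicit computation are equivalent, but the latter is self-contained and a line shorter.
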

\begin{proof}
Let $A\in M_{m,n}(\R)$ and $\alpha\in\Lambda^k(\R^m)^*$. 
Let $|\cdot|$ be the standard Euclidean norm on $\R^n,\R^m$. 
It suffices to show 
$|A^*\alpha|\le \sqrt{{\rm tr}({}^t\!AA)}^k|\alpha|$. 
There are orthonormal basis $\{ e_1,\ldots,e_n\}\subset\R^n$ and 
$\{ E_1,\ldots,E_m\}\subset\R^m$ and a constant $a_i\in\R$ for 
$i=1,\ldots,\min\{ m,n\}$ such that 
$A(e_i)=a_iE_i$ for $i\le m$ and $A(e_j)=0$ if $n>m$ and $j>m$. 
Let $\{ e^1,\ldots,e^n\}\subset(\R^n)^*$ and 
$\{ E^1,\ldots,E^m\}\subset(\R^m)^*$ be the dual basis. 
By using multi-index $I=(i_1,\ldots,i_k)\in \{ 1,\ldots,m\}^k$, 
we put $\alpha=\sum_I\alpha_I E^I$, where 
$E^I=E^{i_1}\wedge\cdots\wedge E^{i_k}$. 
Then we can see $|\alpha|^2=\sum_I|\alpha_I|^2$, $
{\rm tr}({}^t\!AA)=\sum_i a_i^2$, $A^*\alpha=\sum_I\alpha_Ia_{i_1}\cdots a_{i_k}e^I$ and 
\begin{align*}
|A^*\alpha|^2&=\sum_I|\alpha_Ia_{i_1}\cdots a_{i_k}|^2\\
&\le \sum_I|\alpha_I|^2{\rm tr}({}^t\!AA)^k
={\rm tr}({}^t\!AA)^k|\alpha|^2.
\end{align*}
\end{proof}

\begin{lem}
Let $(M,g_M)$, $(N,g_N)$ be smooth Riemannian manifolds 
and $f\colon M\to N$ be a smooth map. 
Let $g'_M$ be another Riemannian metric on $M$ such that 
$C_0g_M\le g'_M\le C_1g_M$ for some positive constants $C_0,C_1$. Then we have 
$C_1^{-1/2}\| df\|_{g_M,g_N}\le \| df\|_{g'_M,g_N}\le C_0^{-1/2}\| df\|_{g_M,g_N}$.
\label{lem E<E'}
\end{lem}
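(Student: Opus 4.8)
The plan is to reduce the statement to a pointwise linear-algebra inequality for the trace of a nonnegative symmetric bilinear form with respect to two comparable inner products. By definition $\| df\|_{g_M,g_N}^2={\rm tr}_{g_M}(f^*g_N)$ and $\| df\|_{g'_M,g_N}^2={\rm tr}_{g'_M}(f^*g_N)$, and at each point $x\in M$ the tensor $(f^*g_N)_x$ is a nonnegative-definite symmetric bilinear form on $T_xM$. Hence it suffices to prove the following: if $g,g'$ are inner products on a finite-dimensional real vector space $V$ with $C_0g\le g'\le C_1g$, and $S$ is a nonnegative-definite symmetric bilinear form on $V$, then $C_1^{-1}{\rm tr}_g(S)\le {\rm tr}_{g'}(S)\le C_0^{-1}{\rm tr}_g(S)$.

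To prove this I would simultaneously diagonalize the pair $(g,g')$: choose a basis $\{v_1,\ldots,v_n\}$ of $V$ which is orthonormal for $g$ and orthogonal for $g'$, so that $g'(v_i,v_j)=\mu_i\delta_{ij}$ with $\mu_i>0$. The hypothesis $C_0g\le g'\le C_1g$ is then equivalent to $C_0\le \mu_i\le C_1$ for every $i$. Since $\{\mu_i^{-1/2}v_i\}$ is orthonormal for $g'$, we have ${\rm tr}_{g'}(S)=\sum_{i=1}^n\mu_i^{-1}S(v_i,v_i)$, whereas ${\rm tr}_g(S)=\sum_{i=1}^nS(v_i,v_i)$. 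Because $S$ is nonnegative, each summand $S(v_i,v_i)$ is $\ge 0$, and since $C_1^{-1}\le\mu_i^{-1}\le C_0^{-1}$ the two-sided bound follows term by term.

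Applying this at every point of $M$ with $S=(f^*g_N)_x$, $g=(g_M)_x$ and $g'=(g'_M)_x$ gives $C_1^{-1}\| df\|_{g_M,g_N}^2\le \| df\|_{g'_M,g_N}^2\le C_0^{-1}\| df\|_{g_M,g_N}^2$ on all of $M$, and taking square roots yields the claim. There is no real obstacle here; the only minor point to keep in mind is that $f^*g_N$ need only be nonnegative (not positive) definite, which is exactly why the argument uses positivity of the numbers $S(v_i,v_i)$ rather than invertibility of $S$, and the simultaneous diagonalization of a pair of inner products is standard linear algebra.
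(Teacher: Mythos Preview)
Your proof is correct and follows essentially the same approach as the paper: both arguments fix a point, simultaneously diagonalize the pair of inner products $(g_M,g'_M)$, and use nonnegativity of the diagonal terms $S(v_i,v_i)=|df_x(v_i)|_{g_N}^2$ to compare the two traces. The only cosmetic difference is that the paper first observes the scaling identity $\|df\|_{Cg_M,g_N}=C^{-1/2}\|df\|_{g_M,g_N}$ and thereby reduces to the monotonicity statement ``$g_M\le g'_M$ implies $\|df\|_{g'_M,g_N}\le\|df\|_{g_M,g_N}$'', whereas you prove both inequalities in one stroke; the underlying linear algebra is identical.
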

\begin{proof}
It is easy to see $\| df\|_{Cg_M,g_N}=C^{-1/2}\| df\|_{g_M,g_N}$ for $C>0$. 
Therefore, it suffices to show $g_M\le g'_M$ implies 
$\| df\|_{g'_M,g_N}\le \| df\|_{g_M,g_N}$. 
If $e_1,\ldots,e_n$ is an orthonormal basis of $(T_xX,g_M|_x)$, then we have $\| df\|_{g_M,g_N}^2|_x=\sum_{i=1}^n|df_x(e_i)|_{g_N}^2$. Since we can diagonalize $g'_M|_x$ by 
an orthonormal basis of $(T_xX,g_M|_x)$, we may choose 
$e_i$ such that $g_M'(e_i,e_j)=\lambda_i\delta_{ij}$ for some 
$\lambda_i>0$. Since $g_M\le g'_M$, we may suppose 
$\lambda_i\ge 1$. Hence we have 
\begin{align*}
\| df\|_{g'_M,g_N}^2|_x
=\sum_{i=1}^n\left| df_x\left( \frac{e_i}{\sqrt{\lambda_i}}\right)\right|_{g_N}^2
\le \sum_{i=1}^n\left| df_x\left( e_i\right)\right|_{g_N}^2
=\| df\|_{g_M,g_N}^2|_x.
\end{align*}
\end{proof}

\subsection{Estimates on the energies and the integral invariants}
In this subsection, we inherit the setting and the notation of Section \ref{sec Foscolo}. 
Let $(X,\bdomega_\varepsilon,\tilde{\bdomega}_\varepsilon,F_\varepsilon)$ be as in Theorem \ref{thm k3 to t3} and Proposition \ref{prop glue Fe}. 
For $\varepsilon>0$ and $i,j=1,2,3$, put 
\begin{align*}
\tilde{\mathcal{I}}_{\varepsilon,ij}
&:=\mathcal{I}_{ij}( F_\varepsilon,\tilde{\boldsymbol{\omega}}_\varepsilon,\boldsymbol{\theta}),\quad
\mathcal{I}_{\varepsilon,ij}
:=\mathcal{I}_{ij}( F_\varepsilon,\boldsymbol{\omega}_\varepsilon,\boldsymbol{\theta}),\\
\tilde{\E}_\varepsilon
&:=\E( F_\varepsilon,g_{\tilde{\boldsymbol{\omega}}_\varepsilon},g_{\boldsymbol{\theta}}),\quad
\E_\varepsilon
:=\E( F_\varepsilon,g_{\boldsymbol{\omega}_\varepsilon},g_{\boldsymbol{\theta}}).
\end{align*}
For a subset $U\subset X$, we write 
\begin{align*}
\tilde{\mathcal{I}}_{\varepsilon,ij}|_U
&:=\tilde{\mathcal{I}}_{ij}(F_\varepsilon|_U,\tilde{\boldsymbol{\omega}}_\varepsilon|_U,\boldsymbol{\theta}),\quad
\mathcal{I}_{\varepsilon,ij}|_U
:=\mathcal{I}_{ij}( F_\varepsilon|_U,\boldsymbol{\omega}_\varepsilon|_U,\boldsymbol{\theta}),\\
\tilde{\E}_\varepsilon|_U
&:=\E( F_\varepsilon|_U,g_{\tilde{\boldsymbol{\omega}}_\varepsilon}|_U,g_{\boldsymbol{\theta}}),\quad
\E_\varepsilon|_U
:=\E( F_\varepsilon|_U,g_{\boldsymbol{\omega}_\varepsilon}|_U,g_{\boldsymbol{\theta}}).
\end{align*}
Define matrices $\tilde{\mathcal{I}}_{\varepsilon},\mathcal{I}_{\varepsilon},\tilde{\mathcal{I}}_{\varepsilon}|_U,\mathcal{I}_{\varepsilon}|_U \in M_3(\R)$ by 
\begin{align*}
\tilde{\mathcal{I}}_{\varepsilon}&:=( \tilde{\mathcal{I}}_{\varepsilon,ij})_{i,j=1,2,3},\quad
\tilde{\mathcal{I}}_{\varepsilon}|_U:=(\tilde{\mathcal{I}}_{\varepsilon,ij}|_U)_{i,j=1,2,3},\\
\mathcal{I}_{\varepsilon}&:=(\mathcal{I}_{\varepsilon,ij})_{i,j=1,2,3},\quad
\mathcal{I}_{\varepsilon}|_U :=(\mathcal{I}_{\varepsilon,ij}|_U)_{i,j=1,2,3},
\end{align*}
and put  
\begin{align*}
U'_\varepsilon:=M_{\varepsilon^{-1}+h}|_{\ge 2\varepsilon^{2/5}} (\subset U_\varepsilon).
\end{align*}

\begin{prop}
For sufficiently small $\varepsilon>0$, we have 
\begin{align*}
\mathcal{I}_{\varepsilon}|_{U'_\varepsilon}
&=\frac{\varepsilon}{3}\left\{ 3\pi\vol_{g_{\boldsymbol{\theta}}}(\T)-64\pi^2(n+4)\varepsilon^{6/5}\right\}\cdot I_3,\\
\mathcal{E}_\varepsilon|_{U'_\varepsilon}
&={\rm tr}(\mathcal{I}_{\varepsilon}|_{U'_\varepsilon})
=\left\{ 3\pi\vol_{g_{\boldsymbol{\theta}}}(\T)-64\pi^2(n+4)\varepsilon^{6/5}\right\}\varepsilon.
\end{align*}
\label{prop I=E=e}
\end{prop}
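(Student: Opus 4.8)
The plan is to compute both sides directly on the region $U'_\varepsilon = M_{\varepsilon^{-1}+h}|_{\ge 2\varepsilon^{2/5}}$, where by $({\rm ii})$ of Theorem \ref{thm k3 to t3} we have the exact identity $\boldsymbol{\omega}_\varepsilon = \varepsilon\boldsymbol{\eta}_{\varepsilon^{-1}+h}$, and by Proposition \ref{prop glue Fe} the map $F_\varepsilon$ restricts to $\mu_{\varepsilon^{-1}+h} = \mu_h$. First I would pass to the $\tau$-invariant double cover $\hat M_{\varepsilon^{-1}+h}$: since the integrands $\hat\eta_{h,i}\wedge\hat\mu_h^*(\theta_j\wedge\theta_k)$ are $\tau$-invariant, integrals over the quotient are exactly half the integrals over the cover, so it suffices to work upstairs with the Gibbons-Hawking data. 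There, Theorem \ref{thm calibrate gh} gives the pointwise identity $\sum_i \hat\eta_{h,i}\wedge\hat\mu_h^*(\theta_j\wedge\theta_k) = {\rm tr}_{g_{\hat{\boldsymbol\eta}_h}}(\hat\mu_h^*g_{\T})\,{\rm vol}_{g_{\hat{\boldsymbol\eta}_h}}$, which will immediately yield the relation ${\rm tr}(\mathcal{I}_\varepsilon|_{U'_\varepsilon}) = \mathcal{E}_\varepsilon|_{U'_\varepsilon}$ once I account for the scaling $\boldsymbol\omega_\varepsilon = \varepsilon\boldsymbol\eta$; I also need to check the off-diagonal terms $\mathcal{I}_{\varepsilon,ij}|_{U'_\varepsilon}$ vanish and the diagonal ones are all equal, which follows from the relations $\hat\eta_{h,i}\wedge\hat\eta_{h,j} = 0$ for $i\ne j$ and $(\hat\eta_{h,i})^2$ independent of $i$, i.e. from the $SU(2)$-structure identities, so that $\mathcal{I}_\varepsilon|_{U'_\varepsilon}$ is a scalar multiple of $I_3$.

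Next I would carry out the integral computation itself. Using the Gibbons-Hawking formulas, $\hat\eta_{h,i}\wedge\hat\mu_h^*(\theta_i\wedge\theta_{\text{other}})$ can be evaluated fiberwise: the $\theta\wedge\theta_1\wedge\theta_2\wedge\theta_3$ component picks out the $\theta_j\wedge\theta_k$ piece of $\hat\eta_{h,i}$ (which carries the factor $h$), and integrating over the $U(1)$-fiber of $\hat\mu_h$ produces a factor $2\pi$ (the connection $\theta$ has period $2\pi$). Thus each diagonal entry equals, after the $\varepsilon$-scaling, $\varepsilon \cdot 2\pi \int_{\T^*} h\,\theta_1\wedge\theta_2\wedge\theta_3$ restricted to the relevant region, divided by $2$ for the quotient, but one must be careful: the domain is not all of $\T^*$ but $\T$ with the $8\varepsilon^{2/5}$-balls around the $q_j$ and $2\varepsilon^{2/5}$-balls around the $\pm p_i$ removed, and $h$ is replaced by $\varepsilon^{-1}+h$. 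So the master integral is $\int_{\T \setminus (\text{balls})} (\varepsilon^{-1}+h)\,d\vol_{g_{\boldsymbol\theta}}$; I would split this into $\varepsilon^{-1}\vol_{g_{\boldsymbol\theta}}(\T \setminus \text{balls})$ plus $\int h$. The harmonic function $h$ integrates to zero over $\T$ in an appropriate regularized sense (or its integral over the complement of small balls is $O(\varepsilon^{2/5})$ times something controlled), while $\varepsilon^{-1}$ times the volume of $2n+8$ balls of radii $\sim\varepsilon^{2/5}$ contributes $\varepsilon^{-1}\cdot (\text{number})\cdot\frac{4\pi}{3}(\varepsilon^{2/5})^3 \sim \varepsilon^{1/5}$... wait, that gives the wrong power, so the leading correction must instead come from the singular parts $k_i/(2\rho_{p_i})$ and $(2m_j-4)/(2\rho_{q_j})$ of $h$ itself integrated over the annular region, which behave like $\rho^{-1}$ and contribute at order $\varepsilon^{6/5}$ after integration over balls of radius $\varepsilon^{2/5}$: $\int_{B(\varepsilon^{2/5})} \rho^{-1}\,d\vol \sim (\varepsilon^{2/5})^2 = \varepsilon^{4/5}$, and with the right combinatorial bookkeeping of the $24 = \sum m_j + \sum k_i$ total "charge" one lands on the stated $64\pi^2(n+4)\varepsilon^{6/5}$.

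The main obstacle, therefore, is the careful bookkeeping of this lower-order term: I must track exactly which balls are removed and at which radii ($\varepsilon^{2/5}$ around each $p_i, -p_i$ and around each $q_j$, per Theorem \ref{thm k3 to t3}$({\rm i})$), extract the $\rho_p^{-1}$ singular behavior of $h$ near each puncture with the correct coefficient ($k_i/2$ at $\pm p_i$, $(2m_j-4)/2$ at $q_j$), and sum the resulting boundary/ball contributions using the balancing condition $\sum m_j + \sum k_i = 16$. The regular part $\varphi$ of $h$ near each puncture contributes only at higher order and can be absorbed into the error, and the constant-ambiguity of $h$ is irrelevant since $\mu_h = \mu_{\varepsilon^{-1}+h}$ is unchanged and only the scaled harmonic function $\varepsilon^{-1}+h$ enters the $2$-forms. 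I would organize the computation so that the $3\pi\vol_{g_{\boldsymbol\theta}}(\T)\varepsilon$ main term comes from $\varepsilon \cdot 2\pi \cdot \varepsilon^{-1} \cdot \vol_{g_{\boldsymbol\theta}}(\T) \cdot \frac{1}{2} \cdot 3$ (three equal diagonal entries, factor $\frac12$ from the quotient, $2\pi$ from the fiber), and everything else is the $O(\varepsilon^{6/5})$ correction whose precise constant $64\pi^2(n+4)$ I would verify by the explicit ball integrals. The identity $\mathcal{E}_\varepsilon|_{U'_\varepsilon} = {\rm tr}(\mathcal{I}_\varepsilon|_{U'_\varepsilon})$ then requires no further work beyond Theorem \ref{thm calibrate gh}, since the calibration is an equality, not merely an inequality, precisely on the Gibbons-Hawking region where $F_\varepsilon = \mu_h$.
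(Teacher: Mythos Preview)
Your overall strategy---pass to the $\tau$-cover, use the Gibbons--Hawking formulas, and invoke Theorem~\ref{thm calibrate gh} for the energy equality---matches the paper's. But the central pointwise computation is reversed. On the cover one has $\hat\mu_h^*\Theta_i=\theta_j\wedge\theta_k$, a basic form, and
\[
\hat\eta_{\varepsilon^{-1}+h,i}\wedge\hat\mu_h^*\Theta_i
=\bigl(\theta_i\wedge\theta+(\varepsilon^{-1}+h)\,\theta_j\wedge\theta_k\bigr)\wedge\theta_j\wedge\theta_k
=\theta_1\wedge\theta_2\wedge\theta_3\wedge\theta,
\]
since the term carrying the factor $(\varepsilon^{-1}+h)$ wedges to zero with $\theta_j\wedge\theta_k$. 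So the harmonic function never enters the integrand for $\mathcal I_\varepsilon$: the ``master integral'' is simply $\tfrac{\varepsilon}{2}\int_{\tau^{-1}(U'_\varepsilon)}\theta_1\wedge\theta_2\wedge\theta_3\wedge\theta$, which by Fubini over the circle fibers equals $\pi\varepsilon\,\vol_{g_{\boldsymbol\theta}}\!\bigl(\T\setminus B_\T(S,2\varepsilon^{2/5})\bigr)$. The correction $64\pi^2(n+4)\varepsilon^{6/5}/3$ is then nothing more than the Euclidean volume of the $2n+8$ removed balls of radius $2\varepsilon^{2/5}$; no singular analysis of $h$, no balancing condition, and no regularized integrals are needed.

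With your integrand, the leading term would be $\pi\vol_{g_{\boldsymbol\theta}}(\T)$ rather than $\pi\varepsilon\vol_{g_{\boldsymbol\theta}}(\T)$, so the argument as written cannot close; the subsequent attempt to recover the constant from $\int\rho_p^{-1}$ over small balls is chasing an error introduced one step earlier. A smaller point: the off-diagonal vanishing $\mathcal I_{\varepsilon,ij}|_{U'_\varepsilon}=0$ for $i\ne j$ is \emph{not} a consequence of the $SU(2)$ relations $\hat\eta_i\wedge\hat\eta_j=0$, since the integrand is $\hat\eta_i\wedge\hat\mu_h^*\Theta_j$, not $\hat\eta_i\wedge\hat\eta_j$; it follows from the same direct wedge computation as above.
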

\begin{proof}
By $({\rm ii})$ of Theorem \ref{thm k3 to t3} and the definition of 
$\hat{\bdeta}_{\varepsilon^{-1}+h}$, we can see 
\begin{align*}
\mathcal{I}_{\varepsilon,ii}|_{U'_\varepsilon}
&=\frac{1}{2}\int_{\tau^{-1}(U'_\varepsilon)}\varepsilon\hat{\eta}_{\varepsilon^{-1}+h,i}\wedge \hat{\mu}_{\varepsilon^{-1}+h}^*\Theta_i\\
&=\frac{\varepsilon}{2}\int_{\tau^{-1}(U'_\varepsilon)}\theta_1\wedge \theta_2\wedge\theta_3\wedge\theta,
\end{align*}
and similarly, we have $\mathcal{I}_{\varepsilon,ii}|_{U'_\varepsilon}=0$ if $i\neq j$. 
Here, the orientation of $X$ is determined such that the integration of $\theta_1\wedge \theta_2\wedge\theta_3\wedge\theta$ is positive. 
If we denote by $B_\T(S,r)$ the $r$-ball in $(\T,g_{\boldsymbol{\theta}})$ of the $(2n+8)$-points set $S=\{ q_j,p_i,-p_i\}$, 
then by Fubini's Theorem, we have 
\begin{align*}
\mathcal{I}_{\varepsilon,ii}|_{U'_\varepsilon}
&=\pi\varepsilon\vol_{g_{\boldsymbol{\theta}}}\left( \T\setminus B_\T(S,2\varepsilon^{2/5})\right)\\
&=\left\{ \pi\vol_{g_{\boldsymbol{\theta}}}(\T)-\frac{64\pi^2(n+4)\varepsilon^{6/5}}{3}\right\}\varepsilon.
\end{align*}
Moreover, since the pair $(\bdomega_\varepsilon|_{U'_\varepsilon},F_\varepsilon|_{U'_\varepsilon})$ is locally given by the rescaling of the Gibbons Hawking ansatz, then by Theorem \ref{thm calibrate gh}, we can see 
${\rm tr}(\mathcal{I}_\varepsilon|_{U'_\varepsilon})=\mathcal{E}_\varepsilon|_{U'_\varepsilon}$.
\end{proof}

\begin{lem}
We have 
\begin{align*}
\vol_{g_{\bdomega_\varepsilon}}(U_\varepsilon')
&=\pi\varepsilon\left( 
\vol_{g_{\boldsymbol{\theta}}}( \T)
+\varepsilon\int_\T h\,\vol_{g_{\boldsymbol{\theta}}}
+O(\varepsilon^{6/5})
\right),\\
\vol_{g^A_i}(U^A_{i,\varepsilon})&=O(\varepsilon^{-9/5}),\\
\vol_{g^D_j}(U^D_{j,\varepsilon})&=O(\varepsilon^{-9/5}),\\
\vol_{g_{\bdomega_\varepsilon}}(X)
&=\pi\varepsilon\left( 
\vol_{g_{\boldsymbol{\theta}}}( \T)
+\varepsilon\int_\T h\,\vol_{g_{\boldsymbol{\theta}}}
+O(\varepsilon^{6/5})\right).
\end{align*}
\label{lem vol estimate}
\end{lem}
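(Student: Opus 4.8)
The plan is to compute each of the four volumes separately, exploiting the fact that $g_{\bdomega_\varepsilon}$ is glued from the rescaled Gibbons--Hawking metric on $U'_\varepsilon$ and from rescaled ALF gravitational instanton metrics on the $U^A_{i,\varepsilon},U^D_{j,\varepsilon}$, with controlled errors on the overlaps coming from Theorem \ref{thm k3 to t3}. For $\vol_{g_{\bdomega_\varepsilon}}(U'_\varepsilon)$, I would use $({\rm ii})$ of Theorem \ref{thm k3 to t3}, which gives $\bdomega_\varepsilon=\varepsilon\bdeta_{\varepsilon^{-1}+h}$ there, so the volume form is $\varepsilon^2\,\vol_{g_{\hat\bdeta_{\varepsilon^{-1}+h}}}$ descended to the quotient, i.e. half the pullback of $(\varepsilon^{-1}+h)\,\theta_1\wedge\theta_2\wedge\theta_3\wedge(\text{normalized fiber length})$. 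Integrating over the circle fibers (of total length $2\pi$ in the limit, cf.\ the normalization after \eqref{diff ALF moment}) and then over $\T\setminus B_\T(S,2\varepsilon^{2/5})$ by Fubini, exactly as in the proof of Proposition \ref{prop I=E=e}, yields
\begin{align*}
\vol_{g_{\bdomega_\varepsilon}}(U'_\varepsilon)
&=\pi\varepsilon\int_{\T\setminus B_\T(S,2\varepsilon^{2/5})}(1+\varepsilon h)\,\vol_{g_{\boldsymbol{\theta}}}\\
&=\pi\varepsilon\left(\vol_{g_{\boldsymbol{\theta}}}(\T)+\varepsilon\int_\T h\,\vol_{g_{\boldsymbol{\theta}}}+O(\varepsilon^{6/5})\right),
\end{align*}
the error absorbing both the excised $O(\varepsilon^{6/5})$-volume balls around $S$ and the $\varepsilon\cdot O(\varepsilon^{6/5})$ contribution of $h$ there (using that $h\sim_p k/(2\rho_p)$ near the punctures, so $\int_{B_\T(S,r)}|h|$ is $O(r^2)=O(\varepsilon^{4/5})$, contributing $O(\varepsilon^{1+4/5})$ which is lower order).

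For the two ALF pieces, the point is a crude dimensional estimate. By $({\rm vi})$ of Theorem \ref{thm k3 to t3} we have $g_{\bdomega_\varepsilon}\le \varepsilon^2(1+C\varepsilon)g^A_i$ on the overlap, and on the core $M^A_i|_{\le\varepsilon^{-3/5}}$ the metric $g_{\bdomega_\varepsilon}$ is $\varepsilon^2$ times a fixed $\varepsilon$-perturbation of $g^A_i$ by $({\rm iv})$; in all cases $\vol_{g_{\bdomega_\varepsilon}}\le C\varepsilon^4\,\vol_{g^A_i}$ pointwise on $U^A_{i,\varepsilon}=M^A_i|_{<2\varepsilon^{-3/5}}$. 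Since an ALF metric of type $A_{k-1}$ has $\vol_{g^A_i}(M^A_i|_{<R'})=O((R')^3)$ as $R'\to\infty$ (the metric is asymptotically a circle bundle of bounded fiber length over $S^2\times(R,R')$, so the volume grows like the $3$-dimensional base), we get $\vol_{g^A_i}(U^A_{i,\varepsilon})=O(\varepsilon^{-9/5})$, and hence also $\vol_{g_{\bdomega_\varepsilon}}(U^A_{i,\varepsilon})=O(\varepsilon^4\cdot\varepsilon^{-9/5})=O(\varepsilon^{11/5})$, which is $o(\varepsilon^{1+6/5})$ and thus harmless. The type $D_m$ case is identical, using the $D_m$ asymptotics and $M^D_j|_{<R'}$ having volume $O((R')^3)$ as well, plus the factor $(1+\varepsilon\lambda_j)^{-1}$ which is $1+O(\varepsilon)$.

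Finally, $\vol_{g_{\bdomega_\varepsilon}}(X)$ follows by additivity over the open cover with inclusion-exclusion: $X=U_\varepsilon\cup\bigcup_i U^A_{i,\varepsilon}\cup\bigcup_j U^D_{j,\varepsilon}$, the double overlaps $U_\varepsilon\cap U^A_{i,\varepsilon}$, $U_\varepsilon\cap U^D_{j,\varepsilon}$ are annular regions $W^{k}\times(\varepsilon^{2/5},2\varepsilon^{2/5})$ of $g_{\bdomega_\varepsilon}$-volume $O(\varepsilon^{1+6/5})$ (again by the $\varepsilon^2$-rescaled GH computation on a shell of $g_{\boldsymbol\theta}$-radii $\varepsilon^{2/5}$ to $2\varepsilon^{2/5}$, whose $\T$-volume is $O(\varepsilon^{6/5})$), the triple overlaps are empty by $({\rm iii})$, and $\vol_{g_{\bdomega_\varepsilon}}(U_\varepsilon\setminus U'_\varepsilon)$ is likewise $O(\varepsilon^{1+6/5})$. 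Collecting, $\vol_{g_{\bdomega_\varepsilon}}(X)=\vol_{g_{\bdomega_\varepsilon}}(U'_\varepsilon)+O(\varepsilon^{1+6/5})$, which gives the stated formula. The main obstacle I anticipate is pinning down the error term precisely enough: one must be careful that the cutoff regions and the perturbations $\varepsilon\bdeta'_i,\varepsilon^2\bdeta''_i$, $d(\chi_{p_i}\bm b_{i,\varepsilon})$, and the passage $\bdomega_\varepsilon\rightsquigarrow\tilde\bdomega_\varepsilon$ all contribute at order $\varepsilon^{1+6/5}$ or smaller (not at order $\varepsilon^2$), which is why one tracks $O(\varepsilon^{6/5})$ rather than $O(\varepsilon)$ inside the parentheses; the ALF-volume growth rate $O((R')^3)$ is standard but should be cited or checked from the GH description of the ends.
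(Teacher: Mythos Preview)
Your proposal is correct and follows essentially the same route as the paper: Gibbons--Hawking volume form plus Fubini on $U'_\varepsilon$, cubic ALF volume growth for the $U^A_{i,\varepsilon}$ and $U^D_{j,\varepsilon}$, and then adding up. Two minor remarks: the fiber integration is exact (the connection form $\theta$ has period $2\pi$, no ``in the limit'' needed), and for the last step the paper avoids inclusion--exclusion entirely by simply noting $\vol_{g_{\bdomega_\varepsilon}}(U^A_{i,\varepsilon}),\vol_{g_{\bdomega_\varepsilon}}(U^D_{j,\varepsilon})=O(\varepsilon^{11/5})=\pi\varepsilon\cdot O(\varepsilon^{6/5})$, which already sandwiches $\vol_{g_{\bdomega_\varepsilon}}(X)$ between $\vol_{g_{\bdomega_\varepsilon}}(U'_\varepsilon)$ and $\vol_{g_{\bdomega_\varepsilon}}(U'_\varepsilon)+O(\varepsilon^{11/5})$; also, the passage $\bdomega_\varepsilon\rightsquigarrow\tilde{\bdomega}_\varepsilon$ you mention as a potential obstacle is irrelevant here since the lemma concerns only $g_{\bdomega_\varepsilon}$.
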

\begin{proof}
We can see 
\begin{align*}
\vol_{g_{\bdomega_\varepsilon}}(U_\varepsilon')
&=\frac{1}{2}\int_{\hat{M}_{\varepsilon^{-1}+h}|_{\ge 2\varepsilon^{2/5}}}\varepsilon^2\vol_{g_{\hat{\bdeta}_{\varepsilon^{-1}+h}}}\\
&=\pi\varepsilon^2\int_{\T\setminus B_\T(S,2\varepsilon^{2/5})}(\varepsilon^{-1}+h)\vol_{g_{\boldsymbol{\theta}}}\\
&=\pi\varepsilon\left( 
\vol_{g_{\boldsymbol{\theta}}}( \T)
-\frac{64(n+4)\pi\varepsilon^{6/5}}{3}
\right)
 + \pi\varepsilon^2\int_{ \T\setminus B_\T(S,2\varepsilon^{2/5})}h\,\vol_{g_{\boldsymbol{\theta}}}.
\end{align*}
Since 
\begin{align*}
\int_{ \T\setminus B_{\T}(S,2\varepsilon^{2/5})}h\,\vol_{g_{\boldsymbol{\theta}}}
&=\int_{\T}h\,\vol_{g_{\boldsymbol{\theta}}}
-2\sum_{i=1}^n \int_{B_{\T}(p_i,2\varepsilon^{2/5})}h\,\vol_{g_{\boldsymbol{\theta}}}\\
&\quad\quad
-\sum_{j=1}^8 \int_{B_{\T}(q_j,2\varepsilon^{2/5})}h\,\vol_{g_{\boldsymbol{\theta}}}\\
&=\int_{\T}h\,\vol_{g_{\boldsymbol{\theta}}}+O(\varepsilon^{4/5}),
\end{align*}
we have 
\begin{align*}
\vol_{g_{\bdomega_\varepsilon}}(U_\varepsilon')
&=\pi\varepsilon\left( 
\vol_{g_{\boldsymbol{\theta}}}( \T)
+\varepsilon\int_\T h\,\vol_{g_{\boldsymbol{\theta}}}
+O(\varepsilon^{6/5})\right).
\end{align*}
Recall that $U^A_{i,\varepsilon}=M^A_i|_{<2\varepsilon^{-3/5}}=(\mu^A_i)^{-1}(B_{\R^3}(0,2\varepsilon^{-3/5}))$. 
Since the volume of the inverse image of $\mu^A_i$ is proportional to the Euclidean volume of its image by $\mu^A_i$, we have 
$\vol_{g^A_i}(U^A_{i,\varepsilon})=O(\varepsilon^{-9/5})$. 
The estimates on $U^D_{j,\varepsilon}$ is proved in the same way. 

By the estimates in $({\rm vi})$ of Theorem \ref{thm k3 to t3}, 
we can see $\vol_{g_{\bdomega_\varepsilon}}(U^A_{i,\varepsilon})
\le \varepsilon^4(1+C\varepsilon)^2\vol_{g^A_i}(U^A_{i,\varepsilon})=O(\varepsilon^{11/5})$ 
and 
$\vol_{g_{\bdomega_\varepsilon}}(U^D_{j,\varepsilon})=O(\varepsilon^{11/5})$, hence we have 
\begin{align*}
\vol_{g_{\bdomega_\varepsilon}}(X)
=\pi\varepsilon\left( 
\vol_{g_{\boldsymbol{\theta}}}( \T)
+\varepsilon\int_\T h\,\vol_{g_{\boldsymbol{\theta}}}
+O(\varepsilon^{6/5})\right).
\end{align*}
\end{proof}


\begin{prop}
We have 
\begin{align*}
\max\left\{ \left| \tilde{\mathcal{I}}_{\varepsilon,kl}|_{X\setminus U'_\varepsilon}\right|,\,
\left| \mathcal{I}_{\varepsilon,kl}|_{X\setminus U'_\varepsilon}\right|,\,
\tilde{\E}_\varepsilon|_{X\setminus U'_\varepsilon},\,
\E_\varepsilon|_{X\setminus U'_\varepsilon} \right\}
=O(\varepsilon^{11/5}).
\end{align*}
\label{prop IE on small region}
\end{prop}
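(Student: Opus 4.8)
The plan is to localize the estimate to the ALF pieces $U^A_{i,\varepsilon},U^D_{j,\varepsilon}$, where $F_\varepsilon$ and $\boldsymbol{\omega}_\varepsilon$ are explicitly a rescaling of a gravitational instanton. First I would note that $U'_\varepsilon=M_{\varepsilon^{-1}+h}|_{\ge 2\varepsilon^{2/5}}$ is disjoint from each $U^A_{i,\varepsilon}$ and $U^D_{j,\varepsilon}$: the overlap $U_\varepsilon\cap U^A_{i,\varepsilon}\cong H^{k_i}_{(\varepsilon^{2/5},2\varepsilon^{2/5})}$ lies over radii $<2\varepsilon^{2/5}$, while $U^A_{i,\varepsilon}\setminus U_\varepsilon$ is the deep instanton region, not part of $M_{\varepsilon^{-1}+h}$; and since $U_\varepsilon\setminus U'_\varepsilon=\bigcup_i(U_\varepsilon\cap U^A_{i,\varepsilon})\cup\bigcup_j(U_\varepsilon\cap U^D_{j,\varepsilon})$, it follows that $X\setminus U'_\varepsilon=\bigsqcup_{i=1}^nU^A_{i,\varepsilon}\sqcup\bigsqcup_{j=1}^8U^D_{j,\varepsilon}$ (disjointly). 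Thus it is enough to bound each of $|\mathcal{I}_{\varepsilon,kl}|_{U^A_{i,\varepsilon}}|$, $|\tilde{\mathcal{I}}_{\varepsilon,kl}|_{U^A_{i,\varepsilon}}|$, $\E_\varepsilon|_{U^A_{i,\varepsilon}}$, $\tilde{\E}_\varepsilon|_{U^A_{i,\varepsilon}}$ by $O(\varepsilon^{11/5})$ and likewise on $U^D_{j,\varepsilon}$, then sum over the finitely many pieces.

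Two uniform bounds on $U^A_{i,\varepsilon}$ are needed (the case of $U^D_{j,\varepsilon}$ is analogous and in fact easier, since by $({\rm v})$ of Theorem \ref{thm k3 to t3} the triple $\boldsymbol{\omega}_\varepsilon$ is a pure rescaling of $\boldsymbol{\eta}^D_j$ on the deep region). The first is the metric comparison $(1-o(1))\varepsilon^2g^A_i\le g_{\boldsymbol{\omega}_\varepsilon}\le (1+o(1))\varepsilon^2g^A_i$ on all of $U^A_{i,\varepsilon}$, together with the same for $g_{\widetilde{\boldsymbol{\omega}}_\varepsilon}$. On $M^A_i|_{(\varepsilon^{-3/5},2\varepsilon^{-3/5})}$ this is exactly $({\rm vi})$; on $M^A_i|_{\le\varepsilon^{-3/5}}$ one has $\boldsymbol{\omega}_\varepsilon=\varepsilon^2(\boldsymbol{\eta}^A_i+\varepsilon\boldsymbol{\eta}'_i+\varepsilon^2\boldsymbol{\eta}''_i)$ by $({\rm iv})$ with $\boldsymbol{\eta}'_i,\boldsymbol{\eta}''_i$ growing at most linearly in the ALF radius, hence with $g^A_i$-norm $O(\varepsilon^{-3/5})$ on $U^A_{i,\varepsilon}$; since then $|\varepsilon\boldsymbol{\eta}'_i+\varepsilon^2\boldsymbol{\eta}''_i|_{g^A_i}=o(1)$ and $Q_{\boldsymbol{\omega}_\varepsilon}=I_3$ there, Lemma \ref{lem norm met} gives the comparison, and the last inequality of $({\rm vi})$ transfers it to $g_{\widetilde{\boldsymbol{\omega}}_\varepsilon}$. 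The second bound is $\|dF_\varepsilon\|_{g_{\boldsymbol{\omega}_\varepsilon},g_{\boldsymbol{\theta}}}=O(1)$ and $\|dF_\varepsilon\|_{g_{\widetilde{\boldsymbol{\omega}}_\varepsilon},g_{\boldsymbol{\theta}}}=O(1)$ on $U^A_{i,\varepsilon}$: by Proposition \ref{prop glue Fe}, $F_\varepsilon=\varepsilon\mu^A_i$ there with $\mu^A_i$ the Gibbons--Hawking moment map extended to $M^A_i$, for which $\|d\mu^A_i\|^2_{g^A_i,g_{\boldsymbol{\theta}}}=3(h^A_i)^{-1}\le 3$ (this follows from Theorem \ref{thm calibrate gh} and $h^A_i\ge 1$); for $U^D_{j,\varepsilon}$, $\|d\mu^D_j\|_{g^D_j,g_{\boldsymbol{\theta}}}$ is bounded on $M^D_j$ by Proposition \ref{prop ext hk moment ALF}, compactness of the core, and the normalization at the ALF end. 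Since $\|d(\varepsilon\mu^A_i)\|^2_{\varepsilon^2g^A_i,g_{\boldsymbol{\theta}}}=\|d\mu^A_i\|^2_{g^A_i,g_{\boldsymbol{\theta}}}$, Lemma \ref{lem E<E'} and the metric comparison give the bound. The main obstacle is the deep-region case of this first comparison: $({\rm vi})$ supplies it only on the gluing annulus, so one must control the $\varepsilon$-dependent correction $\varepsilon\boldsymbol{\eta}'_i+\varepsilon^2\boldsymbol{\eta}''_i$ of $\boldsymbol{\eta}^A_i$ on an unboundedly large ALF region, and it is exactly the linear growth of $\boldsymbol{\eta}'_i,\boldsymbol{\eta}''_i$ (the point of keeping $\lambda_i,l_i$ in the construction) that saves this.

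With these two bounds the rest is routine. For the energy, $\E_\varepsilon|_{U^A_{i,\varepsilon}}\le\big(\sup_{U^A_{i,\varepsilon}}\|dF_\varepsilon\|^2_{g_{\boldsymbol{\omega}_\varepsilon},g_{\boldsymbol{\theta}}}\big)\,\vol_{g_{\boldsymbol{\omega}_\varepsilon}}(U^A_{i,\varepsilon})=O(1)\cdot O(\varepsilon^{11/5})$ by Lemma \ref{lem vol estimate}, and $\tilde{\E}_\varepsilon|_{U^A_{i,\varepsilon}}$ is bounded the same way using $\vol_{g_{\widetilde{\boldsymbol{\omega}}_\varepsilon}}(U^A_{i,\varepsilon})=O(\varepsilon^{11/5})$ (again from the metric comparison) and the bound on $\|dF_\varepsilon\|_{g_{\widetilde{\boldsymbol{\omega}}_\varepsilon},g_{\boldsymbol{\theta}}}$. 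For the integral invariants, Lemma \ref{lem norm asso} gives the pointwise inequality $|\omega_{\varepsilon,k}\wedge F_\varepsilon^*\Theta_l|_{g_{\boldsymbol{\omega}_\varepsilon}}\le C\,|\omega_{\varepsilon,k}|_{g_{\boldsymbol{\omega}_\varepsilon}}\,|F_\varepsilon^*\Theta_l|_{g_{\boldsymbol{\omega}_\varepsilon}}$; here $|\omega_{\varepsilon,k}|_{g_{\boldsymbol{\omega}_\varepsilon}}$ is bounded ($=\sqrt2$ where $\boldsymbol{\omega}_\varepsilon$ is an $SU(2)$-structure, $\sqrt2(1+o(1))$ elsewhere since $|Q_{\boldsymbol{\omega}_\varepsilon}-I_3|=O(\varepsilon^{9/5})$), while $|F_\varepsilon^*\Theta_l|_{g_{\boldsymbol{\omega}_\varepsilon}}\le\|dF_\varepsilon\|^2_{g_{\boldsymbol{\omega}_\varepsilon},g_{\boldsymbol{\theta}}}\,F_\varepsilon^*(|\Theta_l|_{g_{\boldsymbol{\theta}}})=O(1)$ by Lemma \ref{lem map norm} and $|\Theta_l|_{g_{\boldsymbol{\theta}}}=1$; integrating against $\vol_{g_{\boldsymbol{\omega}_\varepsilon}}$ and applying Lemma \ref{lem vol estimate} yields $|\mathcal{I}_{\varepsilon,kl}|_{U^A_{i,\varepsilon}}|=O(\varepsilon^{11/5})$, and the identical argument with $\widetilde{\boldsymbol{\omega}}_\varepsilon$ in place of $\boldsymbol{\omega}_\varepsilon$ (now $|\widetilde{\omega}_{\varepsilon,k}|_{g_{\widetilde{\boldsymbol{\omega}}_\varepsilon}}=\sqrt2$ exactly, as $\widetilde{\boldsymbol{\omega}}_\varepsilon$ is hyper-K\"ahler) gives $|\tilde{\mathcal{I}}_{\varepsilon,kl}|_{U^A_{i,\varepsilon}}|=O(\varepsilon^{11/5})$. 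The $U^D_{j,\varepsilon}$ pieces are treated identically, and summing the four bounds over $i=1,\dots,n$ and $j=1,\dots,8$ completes the proof.
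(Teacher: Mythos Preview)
Your argument is correct and follows essentially the same route as the paper: decompose $X\setminus U'_\varepsilon$ into the ALF pieces, compare $g_{\boldsymbol{\omega}_\varepsilon}$ with $\varepsilon^2g^A_i$ (resp.\ $\varepsilon^2g^D_j$), use $F_\varepsilon=\varepsilon\mu^A_i$ together with the boundedness of $\|d\mu^A_i\|_{g^A_i,g_{\boldsymbol{\theta}}}$, and finish with the volume bound $\vol_{g^A_i}(U^A_{i,\varepsilon})=O(\varepsilon^{-9/5})$ from Lemma~\ref{lem vol estimate}.

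Two small differences are worth recording. First, the paper organizes the reduction more efficiently: it proves once and for all the pointwise bound $|\omega_{\varepsilon,k}\wedge F_\varepsilon^*\Theta_l|_{g_{\boldsymbol{\omega}_\varepsilon}}\le 2\|dF_\varepsilon\|^2_{g_{\boldsymbol{\omega}_\varepsilon},g_{\boldsymbol{\theta}}}$ (via Lemma~\ref{lem map norm}) and the global comparison $\tilde{\E}_\varepsilon\le C^3\E_\varepsilon$ (via $({\rm vi})$ and Lemma~\ref{lem E<E'}), so that all four quantities are controlled by $\E_\varepsilon|_{X\setminus U'_\varepsilon}$ alone; you instead re-run the estimate for each of the four, which is equivalent but slightly longer. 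Second, you are more careful than the paper on one point: $({\rm vi})$ of Theorem~\ref{thm k3 to t3} literally gives the comparison $\varepsilon^2(1\pm C\varepsilon)g^A_i$ only on the gluing annulus $U_\varepsilon\cap U^A_{i,\varepsilon}$, and you correctly supply the missing step on $M^A_i|_{\le\varepsilon^{-3/5}}$ by invoking $({\rm iv})$, the at-most-linear growth of $\boldsymbol{\eta}'_i,\boldsymbol{\eta}''_i$, and Lemma~\ref{lem norm met}. The paper glosses over this and simply asserts the comparison on all of $U^A_{i,\varepsilon}$.
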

\begin{proof}
Let $k,l=1,2,3$. 
By Lemma \ref{lem map norm}, we can see 
\begin{align*}
|\omega_{\varepsilon,k}\wedge F_\varepsilon^*(\Theta_l)|_{g_{\bdomega_\varepsilon}}
&\le 2\| dF_\varepsilon\|_{g_{\bdomega_\varepsilon},g_{\boldsymbol{\theta}}}^2,\\
|\tilde{\omega}_{\varepsilon,k}\wedge F_\varepsilon^*(\Theta_l)|_{g_{\tilde{\bdomega}_\varepsilon}}
&\le 2\| dF_\varepsilon\|_{g_{\tilde{\bdomega}_\varepsilon},g_{\boldsymbol{\theta}}}^2.
\end{align*}
Then we obtain 
\begin{align*}
\left| \mathcal{I}_{\varepsilon,kl}|_{X\setminus U'_\varepsilon}\right|
&\le 2\E_\varepsilon|_{X\setminus U'_\varepsilon}
,\quad
\left| \tilde{\mathcal{I}}_{\varepsilon,kl}|_{X\setminus U'_\varepsilon}\right|
\le 2\tilde{\E}_\varepsilon|_{X\setminus U'_\varepsilon}.
\end{align*}
By $({\rm vi})$ of Theorem \ref{thm k3 to t3}, 
we have $C^{-1}g_{\tilde{\bdomega}_\varepsilon}\le g_{\bdomega_\varepsilon}\le Cg_{\tilde{\bdomega}_\varepsilon}$ for some $C\ge 1$, then Lemma \ref{lem E<E'} implies 
$\| dF_\varepsilon\|_{g_{\tilde{\bdomega}_\varepsilon},g_{\boldsymbol{\theta}}}^2
\le C\| dF_\varepsilon\|_{g_{\bdomega_\varepsilon},g_{\boldsymbol{\theta}}}^2$. 
We also have $\vol_{g_{\tilde{\bdomega}_\varepsilon}}\le C^2\vol_{g_{\bdomega_\varepsilon}}$, 
consequently, we obtain 
\begin{align}
\tilde{\E}_\varepsilon|_{X\setminus U'_\varepsilon}
\le C^3\E_\varepsilon|_{X\setminus U'_\varepsilon}.\label{ineq tild E < C E}
\end{align}
Therefore, giving the upper bound of $\E_\varepsilon|_{X\setminus U'_\varepsilon}$ suffices. 

By $({\rm i,iii})$ of Theorem \ref{thm k3 to t3}, we may write 
\begin{align*}
X\setminus U'_\varepsilon
=\bigcup_{i=1}^nU^A_{i,\varepsilon}\cup\bigcup_{j=1}^8U^D_{j,\varepsilon}.
\end{align*}
By $({\rm vi})$ of Theorem \ref{thm k3 to t3}, 
we obtained $\varepsilon^2C^{-1}g_i^A\le g_{\bdomega_\varepsilon}\le \varepsilon^2Cg_i^A$ for some $C>0$. 
By a similar argument to show \eqref{ineq tild E < C E}, we can see 
\begin{align*}
\E_\varepsilon|_{U^A_{i,\varepsilon}}
\le C^3\E( F_\varepsilon|_{U^A_{i,\varepsilon}},\varepsilon^2 g^A_i|_{U^A_{i,\varepsilon}},g_{\boldsymbol{\theta}})
=C^3\varepsilon^2\E( F_\varepsilon|_{U^A_{i,\varepsilon}},g^A_i|_{U^A_{i,\varepsilon}},g_{\boldsymbol{\theta}}).
\end{align*}
Recall that $F_\varepsilon|_{U^A_{i,\varepsilon}}=\varepsilon\mu^A_i$ by Proposition \ref{prop glue Fe}. 
Accordingly, we have 
\begin{align*}
\E( F_\varepsilon|_{U^A_{i,\varepsilon}},g^A_i|_{U^A_{i,\varepsilon}},g_{\boldsymbol{\theta}})
=\int_{U^A_{i,\varepsilon}}\varepsilon^2\| d\mu^A_i\|_{g^A_i,g_{\boldsymbol{\theta}}}^2d\vol_{g^A_i}
\end{align*}
Here, the map $\mu^A_i$ is close to a Riemannian submersion to $\R^3$ with the flat metric and the length of fibers is close to $2\pi$ as $|\mu^A_i|\to\infty$. In particular, $\| d\mu^A_i\|$ is bounded on ALF space $M^A_i$. By taking $C$ larger if necessary, we have 
\begin{align*}
\E( F_\varepsilon|_{U^A_{i,\varepsilon}},g^A_i|_{U^A_{i,\varepsilon}},g_{\boldsymbol{\theta}})
\le C\varepsilon^2\vol_{g^A_i}(U^A_{i,\varepsilon}). 
\end{align*}
Hence we have 
\begin{align*}
\E( F_\varepsilon|_{U^A_{i,\varepsilon}},g^A_i|_{U^A_{i,\varepsilon}},g_{\boldsymbol{\theta}})
= O(\varepsilon^{11/5})
\end{align*}
by Lemma \ref{lem vol estimate}. 
The estimates on $U^D_{j,\varepsilon}$ is similar. 
\end{proof}

\begin{prop}
We have 
\begin{align*}
\tilde{\mathcal{I}}_\varepsilon
-\mathcal{I}_\varepsilon
&=O(\varepsilon^{27/10}),\\
\tilde{\E}_\varepsilon|_{U'_\varepsilon}-\E_\varepsilon|_{U'_\varepsilon}
&=O(\varepsilon^{21/10}).
\end{align*}
\label{prop error IE}
\end{prop}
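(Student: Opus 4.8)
The plan is to prove the two estimates separately, in both cases starting from the decomposition $\tilde{\boldsymbol{\omega}}_\varepsilon = \boldsymbol{\omega}_\varepsilon + d\boldsymbol{a}_\varepsilon + \boldsymbol{\zeta}_\varepsilon$ recorded in the proof of Theorem \ref{thm k3 to t3}, where $\boldsymbol{a}_\varepsilon \in \Omega^1(X)\otimes\R^3$, $\boldsymbol{\zeta}_\varepsilon = (\sum_{\alpha}\zeta_{\varepsilon,i\alpha}\,\omega_{\varepsilon,\alpha})_{i=1,2,3}$, and (with the choice $\delta=-1/6$) $|\zeta_{\varepsilon,\alpha\beta}| = O(\varepsilon^{53/30})$. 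For the first estimate I would expand $\tilde{\mathcal{I}}_{\varepsilon,ij} - \mathcal{I}_{\varepsilon,ij} = \int_X(\tilde{\omega}_{\varepsilon,i} - \omega_{\varepsilon,i})\wedge F_\varepsilon^*\Theta_j$ along this decomposition. Since $a_{\varepsilon,i}$ is a genuine $1$-form on $X$, the term $\int_X da_{\varepsilon,i}\wedge F_\varepsilon^*\Theta_j$ equals $\int_X d(a_{\varepsilon,i}\wedge F_\varepsilon^*\Theta_j)$, which vanishes by Stokes' theorem on the closed manifold $X$ once we know $F_\varepsilon^*\Theta_j$ is closed; this holds because $\Theta_j=\theta_k\wedge\theta_l$ is a closed $\{\pm1\}$-invariant form on $\T$, so by Proposition \ref{prop pullback form}, together with the compatibility of $d$ with pullback along the local lifts $\tilde{f}_{j,\alpha}$ of $F_\varepsilon$, the form $F_\varepsilon^*\Theta_j$ is a well-defined closed smooth $2$-form on $X$. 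The remaining term is $\sum_{\alpha}\zeta_{\varepsilon,i\alpha}\int_X\omega_{\varepsilon,\alpha}\wedge F_\varepsilon^*\Theta_j = \sum_{\alpha}\zeta_{\varepsilon,i\alpha}\,\mathcal{I}_{\varepsilon,\alpha j}$, and Propositions \ref{prop I=E=e} and \ref{prop IE on small region} give $\mathcal{I}_{\varepsilon,\alpha j} = \mathcal{I}_{\varepsilon,\alpha j}|_{U'_\varepsilon} + \mathcal{I}_{\varepsilon,\alpha j}|_{X\setminus U'_\varepsilon} = O(\varepsilon) + O(\varepsilon^{11/5}) = O(\varepsilon)$. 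Multiplying, $\tilde{\mathcal{I}}_{\varepsilon,ij} - \mathcal{I}_{\varepsilon,ij} = O(\varepsilon^{53/30})\cdot O(\varepsilon) = O(\varepsilon^{83/30}) = O(\varepsilon^{27/10})$, the last step because $83/30 > 27/10$.

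For the second estimate I would restrict to $U'_\varepsilon$, where $F_\varepsilon = \mu_h$ maps into the smooth locus $\overline{\T}_{{\rm reg}}$, so that $\|dF_\varepsilon\|$ has its ordinary meaning for a map of Riemannian manifolds. The last inequality of Theorem \ref{thm k3 to t3}(vi) gives $(1-C\varepsilon^{11/10})g_{\tilde{\boldsymbol{\omega}}_\varepsilon} \le g_{\boldsymbol{\omega}_\varepsilon} \le (1+C\varepsilon^{11/10})g_{\tilde{\boldsymbol{\omega}}_\varepsilon}$ on $U'_\varepsilon$. By Lemma \ref{lem E<E'} this yields $\|dF_\varepsilon\|^2_{g_{\tilde{\boldsymbol{\omega}}_\varepsilon},g_{\boldsymbol{\theta}}} = (1+O(\varepsilon^{11/10}))\,\|dF_\varepsilon\|^2_{g_{\boldsymbol{\omega}_\varepsilon},g_{\boldsymbol{\theta}}}$ pointwise, and comparing the two volume forms on the $4$-manifold $X$ gives $d\vol_{g_{\tilde{\boldsymbol{\omega}}_\varepsilon}} = (1+O(\varepsilon^{11/10}))\,d\vol_{g_{\boldsymbol{\omega}_\varepsilon}}$. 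Integrating the product over $U'_\varepsilon$ gives $\tilde{\mathcal{E}}_\varepsilon|_{U'_\varepsilon} = (1+O(\varepsilon^{11/10}))\,\mathcal{E}_\varepsilon|_{U'_\varepsilon}$, hence $\tilde{\mathcal{E}}_\varepsilon|_{U'_\varepsilon} - \mathcal{E}_\varepsilon|_{U'_\varepsilon} = O(\varepsilon^{11/10})\,\mathcal{E}_\varepsilon|_{U'_\varepsilon}$; since $\mathcal{E}_\varepsilon|_{U'_\varepsilon} = O(\varepsilon)$ by Proposition \ref{prop I=E=e}, this is $O(\varepsilon^{21/10})$.

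Both computations are essentially bookkeeping once these inputs are assembled, and I do not expect a substantial obstacle. The step that needs the most care is the vanishing of $\int_X da_{\varepsilon,i}\wedge F_\varepsilon^*\Theta_j$: it relies on $F_\varepsilon^*\Theta_j$ being a well-defined closed smooth form across the orbifold points $\bar{q}_1,\ldots,\bar{q}_8$, which is precisely what the formalism of Section \ref{sec map} (Proposition \ref{prop pullback form}) provides. Beyond that, the only thing to monitor is which powers of $\varepsilon$ survive the multiplications, in particular that $53/30 + 1 > 27/10$ while $11/10 + 1 = 21/10$.
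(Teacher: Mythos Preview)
Your proposal is correct and follows essentially the same route as the paper: the first estimate is obtained by writing $\tilde{\boldsymbol{\omega}}_\varepsilon=\boldsymbol{\omega}_\varepsilon+d\boldsymbol{a}_\varepsilon+\boldsymbol{\zeta}_\varepsilon$, discarding the exact part (the paper phrases this as ``$\mathcal{I}_{kl}$ depends only on the cohomology class of $\boldsymbol{\omega}$'', which is your Stokes argument), and multiplying the $\zeta$-bound by $\mathcal{I}_\varepsilon=O(\varepsilon)$; the second estimate is the same metric comparison via Theorem~\ref{thm k3 to t3}(vi) and Lemma~\ref{lem E<E'}. The only cosmetic difference is that the paper, rather than fixing $\delta=-1/6$, lets $\delta\to 0$ in the bound $|\zeta_{\varepsilon,kl}|=O(\varepsilon^{(17-4\delta)/10})$ to get $O(\varepsilon^{17/10})$, so that $17/10+1=27/10$ on the nose, whereas you get the slightly stronger $O(\varepsilon^{83/30})$ and then note $83/30>27/10$.
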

\begin{proof}
By the argument in the proof of $({\rm vi})$ of Theorem \ref{thm k3 to t3}, we have $\tilde{\bdomega}_\varepsilon=\bdomega
_\varepsilon+d\bm{a}_\varepsilon+\boldsymbol{\zeta}_\varepsilon$ 
such that $\boldsymbol{\zeta}_\varepsilon=(\sum_{l=1}^3\zeta_{\varepsilon,kl}\omega_{\varepsilon,l})_{k=1,2,3}$ 
and $|\zeta_{\varepsilon,kl}|=O(\varepsilon^{(17-4\delta)/10})$. Here, $\delta$ was taken from $(-1/2,0)$, we may suppose 
$|\zeta_{\varepsilon,kl}|=O(\varepsilon^{17/10})$. 

Now, it is easy to see that the value of $\mathcal{I}_{kl}(f,\bdomega,\boldsymbol{\theta})$ is determined by the cohomology class of $\bdomega$. 
Since we have the identity between the cohomology classes 
$[\tilde{\bdomega}_\varepsilon]=[\bdomega
_\varepsilon]+[\boldsymbol{\zeta}_\varepsilon]$, we obtain 
\begin{align*}
\tilde{\mathcal{I}}_\varepsilon
&=(\mathcal{I}_{kl}(f,\bdomega_\varepsilon+\boldsymbol{\zeta}_\varepsilon,\boldsymbol{\theta}))_{k,l}\\
&=(I_3
+\tilde{\zeta}_{\varepsilon})\mathcal{I}_{\varepsilon},
\end{align*}
where $\tilde{\zeta}_{\varepsilon}=(\zeta_{\varepsilon,kl})_{k,l}\in M_3(\R)$. 
By Propositions \ref{prop I=E=e} and \ref{prop IE on small region}, we have 
\begin{align*}
\mathcal{I}_{\varepsilon}
=\mathcal{I}_{\varepsilon}|_{U'_\varepsilon}+\mathcal{I}_{\varepsilon}|_{X\setminus U'_\varepsilon}
=O(\varepsilon), 
\end{align*}
hence 
\begin{align*}
\tilde{\mathcal{I}}_\varepsilon-\mathcal{I}_{\varepsilon}
&=\tilde{\zeta}_{\varepsilon}\mathcal{I}_{\varepsilon}
=O(\varepsilon^{27/10}).
\end{align*}

By $({\rm vi})$ of Theorem \ref{thm k3 to t3} and 
Lemma \ref{lem map norm}, there is a constant $C>0$ such that 
$(1+C\varepsilon^{11/10})^{-1}\| dF_\varepsilon\|_{g_{\tilde{\bdomega}_\varepsilon},g_{\boldsymbol{\theta}}}^2\le \| dF_\varepsilon\|_{g_{\bdomega_\varepsilon},g_{\boldsymbol{\theta}}}^2\le (1-C\varepsilon^{11/10})^{-1}\| dF_\varepsilon\|_{g_{\tilde{\bdomega}_\varepsilon},g_{\boldsymbol{\theta}}}^2$. 
We also have 
$(1-C\varepsilon^{11/10})^2\vol_{g_{\tilde{\bdomega}_\varepsilon}}\le \vol_{g_{\bdomega_\varepsilon}}\le (1+C\varepsilon^{11/10})^2\vol_{g_{\tilde{\bdomega}_\varepsilon}}$. 
Then we have 
\begin{align*}
\frac{(1-C\varepsilon^{11/10})^2}{1+C\varepsilon^{11/10}}
\le
\frac{\E_\varepsilon|_{U'_\varepsilon}}{\tilde{\E}_\varepsilon|_{U'_\varepsilon}}\le \frac{(1+C\varepsilon^{11/10})^2}{1-C\varepsilon^{11/10}}, 
\end{align*}
which gives 
\begin{align*}
|\E_\varepsilon|_{U'_\varepsilon}-\tilde{\E}_\varepsilon|_{U'_\varepsilon}|
=\E_\varepsilon|_{U'_\varepsilon}\cdot O(\varepsilon^{11/10})=O(\varepsilon^{21/10})
\end{align*}
by Proposition \ref{prop I=E=e}. 
\end{proof}

\begin{thm}
We have 
\begin{align*}
\tilde{\mathcal{I}}_\varepsilon
&=\pi\varepsilon\vol_{g_{\boldsymbol{\theta}}}(\T)\cdot I_3+O(\varepsilon^{11/5}),\\
\frac{\tilde{\E}_\varepsilon}{{\rm tr}(\tilde{\mathcal{I}}_\varepsilon)}
&=1+O(\varepsilon^{11/10}).
\end{align*}
\label{thm main estimate}
\end{thm}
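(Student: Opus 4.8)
The plan is to reduce everything to the estimates already established over the bulk region $U'_\varepsilon$ and over its complement, and then to bookkeep the exponents. Since $X$ is the disjoint union of $U'_\varepsilon$ and $X\setminus U'_\varepsilon$ up to a null set, each of $\tilde{\mathcal{I}}_\varepsilon,\mathcal{I}_\varepsilon,\tilde{\E}_\varepsilon,\E_\varepsilon$ splits additively over these two pieces; in particular $\tilde{\mathcal{I}}_\varepsilon=\tilde{\mathcal{I}}_\varepsilon|_{U'_\varepsilon}+\tilde{\mathcal{I}}_\varepsilon|_{X\setminus U'_\varepsilon}$, and likewise for the other three.

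For the first identity I would start from Proposition~\ref{prop error IE}, which gives $\tilde{\mathcal{I}}_\varepsilon=\mathcal{I}_\varepsilon+O(\varepsilon^{27/10})$, and then insert the additive splitting of $\mathcal{I}_\varepsilon$: Proposition~\ref{prop IE on small region} bounds the complementary term entrywise by $O(\varepsilon^{11/5})$, while Proposition~\ref{prop I=E=e} evaluates $\mathcal{I}_\varepsilon|_{U'_\varepsilon}=\tfrac{\varepsilon}{3}\bigl\{3\pi\vol_{g_{\boldsymbol{\theta}}}(\T)-64\pi^2(n+4)\varepsilon^{6/5}\bigr\}I_3$. Since the leading term here is $\pi\varepsilon\vol_{g_{\boldsymbol{\theta}}}(\T)\,I_3$ and the remainder equals $-\tfrac{64\pi^2(n+4)}{3}\varepsilon^{11/5}I_3=O(\varepsilon^{11/5})$, and $11/5<27/10$, collecting errors yields $\tilde{\mathcal{I}}_\varepsilon=\pi\varepsilon\vol_{g_{\boldsymbol{\theta}}}(\T)\,I_3+O(\varepsilon^{11/5})$. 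In particular ${\rm tr}(\tilde{\mathcal{I}}_\varepsilon)=3\pi\varepsilon\vol_{g_{\boldsymbol{\theta}}}(\T)+O(\varepsilon^{11/5})$, which is positive for small $\varepsilon$, so the quotient in the second identity is meaningful.

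For the second identity the same bookkeeping applies to the energy: Proposition~\ref{prop IE on small region} gives $\tilde{\E}_\varepsilon=\tilde{\E}_\varepsilon|_{U'_\varepsilon}+O(\varepsilon^{11/5})$, Proposition~\ref{prop error IE} gives $\tilde{\E}_\varepsilon|_{U'_\varepsilon}=\E_\varepsilon|_{U'_\varepsilon}+O(\varepsilon^{21/10})$, and Proposition~\ref{prop I=E=e} gives $\E_\varepsilon|_{U'_\varepsilon}={\rm tr}(\mathcal{I}_\varepsilon|_{U'_\varepsilon})=3\pi\varepsilon\vol_{g_{\boldsymbol{\theta}}}(\T)+O(\varepsilon^{11/5})$; since $21/10<11/5$ these combine to $\tilde{\E}_\varepsilon=3\pi\varepsilon\vol_{g_{\boldsymbol{\theta}}}(\T)+O(\varepsilon^{21/10})$. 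Dividing by ${\rm tr}(\tilde{\mathcal{I}}_\varepsilon)$ from the previous step and using $21/10-1=11/10$ together with $11/5-1=6/5>11/10$ gives $\tilde{\E}_\varepsilon/{\rm tr}(\tilde{\mathcal{I}}_\varepsilon)=\bigl(1+O(\varepsilon^{11/10})\bigr)/\bigl(1+O(\varepsilon^{6/5})\bigr)=1+O(\varepsilon^{11/10})$.

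There is essentially no obstacle beyond tracking exponents; the genuine content---the pointwise calibration identity of Theorem~\ref{thm calibrate gh} underlying the equality ${\rm tr}(\mathcal{I}_\varepsilon|_{U'_\varepsilon})=\E_\varepsilon|_{U'_\varepsilon}$, the volume computations, and the comparison between $\bdomega_\varepsilon$ and $\widetilde{\bdomega}_\varepsilon$---is already packaged into Propositions~\ref{prop I=E=e}, \ref{prop IE on small region} and \ref{prop error IE}. The one mild subtlety to be careful about is that the numerator's error $\varepsilon^{21/10}$ genuinely dominates the complementary contribution $\varepsilon^{11/5}$, and that after division the denominator's relative error $\varepsilon^{6/5}$ is swamped by the numerator's relative error $\varepsilon^{11/10}$, so that the sharp final rate is precisely $\varepsilon^{11/10}$.
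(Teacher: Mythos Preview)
Your proof is correct and follows essentially the same approach as the paper: additively split $\tilde{\mathcal{I}}_\varepsilon$ and $\tilde{\E}_\varepsilon$ over $U'_\varepsilon$ and its complement, invoke Propositions~\ref{prop I=E=e}, \ref{prop IE on small region} and \ref{prop error IE}, and track the exponents. Your bookkeeping of the error orders (in particular that $\varepsilon^{21/10}$ dominates $\varepsilon^{11/5}$ for the energy and that the final relative error is $\varepsilon^{11/10}$) matches the paper's exactly.
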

\begin{proof}
We have 
\begin{align*}
\tilde{\mathcal{I}}_\varepsilon - \mathcal{I}_\varepsilon|_{U'_\varepsilon}
&= \left( \tilde{\mathcal{I}}_\varepsilon - \mathcal{I}_\varepsilon\right)
+\mathcal{I}_\varepsilon|_{X\setminus U'_\varepsilon}=O(\varepsilon^{11/5}),\\
\tilde{\mathcal{E}}_\varepsilon - \mathcal{E}_\varepsilon|_{U'_\varepsilon}
&= \tilde{\mathcal{E}}_\varepsilon|_{X\setminus U'_\varepsilon}
+ \left( \tilde{\mathcal{E}}_\varepsilon|_{U'_\varepsilon} - \mathcal{E}_\varepsilon|_{U'_\varepsilon}\right)
=O(\varepsilon^{21/10}).
\end{align*}
By Propositions \ref{prop I=E=e}, \ref{prop IE on small region} and \ref{prop error IE}, we have 
\begin{align*}
\tilde{\mathcal{I}}_\varepsilon - \pi\varepsilon\vol_{g_{\boldsymbol{\theta}}}(\T)\cdot I_3
&= O(\varepsilon ^{11/5}),\\
\tilde{\mathcal{E}}_\varepsilon - 3\pi\varepsilon\vol_{g_{\boldsymbol{\theta}}}(\T)
&= O(\varepsilon ^{21/10}).
\end{align*}
Since ${\rm tr}(\tilde{\mathcal{I}}_\varepsilon)=3\pi\varepsilon\vol_{g_{\boldsymbol{\theta}}}(\T)+O(\varepsilon ^{11/5})$, we have 
$\tilde{\mathcal{E}}_\varepsilon/{\rm tr}(\tilde{\mathcal{I}}_\varepsilon)=1+O(\varepsilon ^{11/10})$. 
\end{proof}
\begin{proof}[Proof of Theorem \ref{thm main intro}]
Put $g_\varepsilon=g_{\tilde{\bdomega}_\varepsilon}$. 
The invariant $\mathcal{I}_\varepsilon([f])$ in Theorem \ref{thm main intro} is given by $\sum_{i=1}^3\mathcal{I}_{ii}(f,\tilde{\bdomega}_\varepsilon,\boldsymbol{\theta})$ in  Theorem \ref{thm I<E}. 
Then we obtain the inequality $\E(f,g_\varepsilon,g_\T)\ge \mathcal{I}_\varepsilon([f])$ for all $f\colon X\to \T/\{ \pm 1\}$. 

$({\rm i})$ is given by \cite[Theorem 6.17]{Foscolo2019}. 
$({\rm ii})$ was shown by Proposition \ref{prop glue Fe}. 
Now, $\mathcal{I}_\varepsilon([F_\varepsilon])$ is equal to ${\rm tr}(\tilde{I}_\varepsilon)$ in this section. Then $({\rm iii,iv})$ is shown by Theorem \ref{thm main estimate}. 
\end{proof}

We also obtain the estimate on the difference between $\tilde{\mathcal{I}}_\varepsilon$ and the volume of $g_{\tilde{\bdomega}_\varepsilon}$ as follows. 

\begin{thm}
We have 
\begin{align*}
\frac{\vol_{g_{\tilde{\bdomega}_\varepsilon}}(X)}{{\rm tr}(\tilde{\mathcal{I}}_\varepsilon)}
&=\frac{1}{3}+\frac{\varepsilon \int_\T h\,\vol_{g_{\boldsymbol{\theta}}}}{3\vol_{g_{\boldsymbol{\theta}}}(\T) }+O(\varepsilon^{11/10}).
\end{align*}
In particular, $3\vol_{g_{\tilde{\bdomega}_\varepsilon}}(X)/{\rm tr}(\tilde{\mathcal{I}}_\varepsilon)\to 1$ as $\varepsilon\to 0$, however, $3\vol_{g_{\tilde{\bdomega}_\varepsilon}}(X)\neq {\rm tr}(\tilde{\mathcal{I}}_\varepsilon)$ for sufficiently small $\varepsilon$ unless $\int_\T h\,\vol_{g_{\boldsymbol{\theta}}}=0$. 
\label{thm I and vol}
\end{thm}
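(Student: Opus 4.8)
The plan is to combine the volume estimate of Lemma~\ref{lem vol estimate} with the metric comparisons recorded in $({\rm vi})$ of Theorem~\ref{thm k3 to t3} and the expansion of $\tilde{\mathcal{I}}_\varepsilon$ from Theorem~\ref{thm main estimate}, and then to divide the two resulting asymptotic expansions while tracking the error exponents.

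First I would estimate $\vol_{g_{\tilde{\bdomega}_\varepsilon}}(X)$. Lemma~\ref{lem vol estimate} gives $\vol_{g_{\bdomega_\varepsilon}}(X)=\pi\varepsilon\vol_{g_{\boldsymbol{\theta}}}(\T)+\pi\varepsilon^2\int_\T h\,\vol_{g_{\boldsymbol{\theta}}}+O(\varepsilon^{11/5})$. The bound $(1-C\varepsilon^{11/10})g_{\tilde{\bdomega}_\varepsilon}\le g_{\bdomega_\varepsilon}\le (1+C\varepsilon^{11/10})g_{\tilde{\bdomega}_\varepsilon}$ from $({\rm vi})$ of Theorem~\ref{thm k3 to t3} shows that the two volume measures on the $4$-manifold $X$ differ by the factor $(1\pm C\varepsilon^{11/10})^2$, hence $\vol_{g_{\tilde{\bdomega}_\varepsilon}}(X)=(1+O(\varepsilon^{11/10}))\vol_{g_{\bdomega_\varepsilon}}(X)$. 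Since the leading term of $\vol_{g_{\bdomega_\varepsilon}}(X)$ is of order $\varepsilon$, multiplying out produces an extra error of order $\varepsilon^{1+11/10}=\varepsilon^{21/10}$, and as $21/10<11/5$ one gets
\[
\vol_{g_{\tilde{\bdomega}_\varepsilon}}(X)=\pi\varepsilon\vol_{g_{\boldsymbol{\theta}}}(\T)+\pi\varepsilon^2\int_\T h\,\vol_{g_{\boldsymbol{\theta}}}+O(\varepsilon^{21/10}).
\]

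Next, Theorem~\ref{thm main estimate} gives $\tilde{\mathcal{I}}_\varepsilon=\pi\varepsilon\vol_{g_{\boldsymbol{\theta}}}(\T)\cdot I_3+O(\varepsilon^{11/5})$, so ${\rm tr}(\tilde{\mathcal{I}}_\varepsilon)=3\pi\varepsilon\vol_{g_{\boldsymbol{\theta}}}(\T)\bigl(1+O(\varepsilon^{6/5})\bigr)$. Dividing the expansion of $\vol_{g_{\tilde{\bdomega}_\varepsilon}}(X)$ by this --- i.e.\ dividing numerator and denominator by $3\pi\varepsilon\vol_{g_{\boldsymbol{\theta}}}(\T)$ and using $(1+O(\varepsilon^{6/5}))^{-1}=1+O(\varepsilon^{6/5})$ --- I obtain
\[
\frac{\vol_{g_{\tilde{\bdomega}_\varepsilon}}(X)}{{\rm tr}(\tilde{\mathcal{I}}_\varepsilon)}=\frac13+\frac{\varepsilon\int_\T h\,\vol_{g_{\boldsymbol{\theta}}}}{3\vol_{g_{\boldsymbol{\theta}}}(\T)}+O(\varepsilon^{11/10}),
\]
since among the various error contributions (the $\varepsilon^{21/10}/\varepsilon=\varepsilon^{11/10}$ coming from the numerator, the $\varepsilon^{6/5}$ from the denominator, and their products) the dominant one is $\varepsilon^{11/10}$ because $11/10<6/5$. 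This is the asserted identity. Letting $\varepsilon\to 0$, the last two terms vanish and the ratio tends to $1/3$, so $3\vol_{g_{\tilde{\bdomega}_\varepsilon}}(X)/{\rm tr}(\tilde{\mathcal{I}}_\varepsilon)\to 1$. For the strict inequality I would instead subtract the two leading-order expansions directly, getting $3\vol_{g_{\tilde{\bdomega}_\varepsilon}}(X)-{\rm tr}(\tilde{\mathcal{I}}_\varepsilon)=3\pi\varepsilon^2\int_\T h\,\vol_{g_{\boldsymbol{\theta}}}+O(\varepsilon^{21/10})$, and since $21/10>2$ the $\varepsilon^2$-term dominates, so this difference is nonzero for all sufficiently small $\varepsilon$ whenever $\int_\T h\,\vol_{g_{\boldsymbol{\theta}}}\neq 0$.

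There is no serious obstacle here; the argument is entirely bookkeeping of error exponents. The only point demanding care is that every correction term feeding into the numerator and the denominator --- the $O(\varepsilon^{11/5})$ from the energy and invariant estimates and the $O(\varepsilon^{21/10})$ from the $g_{\bdomega_\varepsilon}$ versus $g_{\tilde{\bdomega}_\varepsilon}$ comparison --- is genuinely $o(\varepsilon^2)$. This is precisely what guarantees that the $\varepsilon^2$-coefficient $\pi\int_\T h\,\vol_{g_{\boldsymbol{\theta}}}$ survives the division rather than being swallowed by the error, and hence that the failure of the exact equality $3\vol_{g_{\tilde{\bdomega}_\varepsilon}}(X)={\rm tr}(\tilde{\mathcal{I}}_\varepsilon)$ is a genuine phenomenon and not an artifact of the estimates.
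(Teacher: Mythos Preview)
Your proof is correct and follows essentially the same route as the paper's: combine Lemma~\ref{lem vol estimate} with the metric comparison in $({\rm vi})$ of Theorem~\ref{thm k3 to t3} to get $\vol_{g_{\tilde{\bdomega}_\varepsilon}}(X)=\pi\varepsilon\vol_{g_{\boldsymbol{\theta}}}(\T)+\pi\varepsilon^2\int_\T h\,\vol_{g_{\boldsymbol{\theta}}}+O(\varepsilon^{21/10})$, then divide by ${\rm tr}(\tilde{\mathcal{I}}_\varepsilon)=3\pi\varepsilon\vol_{g_{\boldsymbol{\theta}}}(\T)+O(\varepsilon^{11/5})$ from Theorem~\ref{thm main estimate}. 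Your explicit justification of the ``in particular'' clause via the subtraction $3\vol_{g_{\tilde{\bdomega}_\varepsilon}}(X)-{\rm tr}(\tilde{\mathcal{I}}_\varepsilon)=3\pi\varepsilon^2\int_\T h\,\vol_{g_{\boldsymbol{\theta}}}+O(\varepsilon^{21/10})$ is a welcome addition that the paper leaves implicit.
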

\begin{proof}
By $({\rm vi})$ of Theorem \ref{thm k3 to t3}, 
we have $(1+C\varepsilon^{11/10})^{-2}\vol_{g_{\bdomega_\varepsilon}}\le 
\vol_{g_{\tilde{\bdomega}_\varepsilon}}\le
(1-C\varepsilon^{11/10})^{-2}\vol_{g_{\bdomega_\varepsilon}}$. 
By Lemma \ref{lem vol estimate}, we can see 
\begin{align*}
\left|\vol_{g_{\tilde{\bdomega}_\varepsilon}}(X)-\pi\varepsilon\vol_{g_{\boldsymbol{\theta}}}(\T)-\pi\varepsilon^2\int_\T h\,\vol_{g_{\boldsymbol{\theta}}}\right|
&\le\left|\vol_{g_{\tilde{\bdomega}_\varepsilon}}(X)-\vol_{g_{\bdomega_\varepsilon}}(X)\right|
+O(\varepsilon^{11/5})\\
&=\vol_{g_{\bdomega_\varepsilon}}(X)\cdot O(\varepsilon^{11/10})+O(\varepsilon^{11/5})\\
&=O(\varepsilon^{21/10}).
\end{align*}
By Theorem \ref{thm main estimate}, we have 
 \begin{align*}
\frac{\vol_{g_{\tilde{\bdomega}_\varepsilon}}}{{\rm tr}(\tilde{\mathcal{I}}_\varepsilon)}
&=\frac{\pi\varepsilon\vol_{g_{\boldsymbol{\theta}}}(\T)+\pi\varepsilon^2\int_\T h\,\vol_{g_{\boldsymbol{\theta}}} + O(\varepsilon^{21/10})}{3\pi\varepsilon\vol_{g_{\boldsymbol{\theta}}}(\T)+O(\varepsilon^{11/5})}\\
&=\frac{1+\varepsilon \int_\T h\,\vol_{g_{\boldsymbol{\theta}}} / \vol_{g_{\boldsymbol{\theta}}}(\T) + O(\varepsilon^{11/10})}{3+O(\varepsilon^{6/5})}. 
\end{align*}
\end{proof}

\bibliographystyle{plain}

\end{document}